\newcommand{\pd}{\hfill $\Box$ \newline}
\newcommand{\rN}{\mathbb{R}}
\newcommand{\cN}{\mathbb{C}}
\newcommand{\zN}{\mathbb{Z}}
\newcommand{\nN}{\mathbb{N}}
\newcommand{\me}{\mathrm{e}}
\newcommand{\mi}{\mathrm{i}}
\newcommand{\md}{\mathrm{d}}
\newcommand{\mL}{\mathcal{L}}
\newcommand{\supp}{\mathrm{supp}}
\newcommand{\Id}{\mathrm{Id}}
\newcommand{\bx}{\mathbf{x}}
\newcommand{\bX}{\mathbf{X}}
\newcommand{\by}{\mathbf{y}}
\newcommand{\bY}{\mathbf{Y}}
\newcommand{\bu}{\mathbf{u}}
\newcommand{\bk}{\mathbf{k}}
\newcommand{\bv}{\mathbf{v}}
\newcommand{\bK}{\mathbf{K}}
\newcommand{\bracketeps}{{(\varepsilon)}}
\newcommand{\bT}{\mathbf{T}}
\theoremstyle{thmstyleone}
\newtheorem{theorem}{Theorem}[section]
\newtheorem{proposition}{Proposition}[section]
\newtheorem{lemma}{Lemma}[section]
\theoremstyle{thmstylethree}
\newtheorem{definition}{Definition}[section]
\theoremstyle{thmstyletwo}
\newtheorem*{remark}{Remark}
\numberwithin{equation}{section}
\begin{document}

\title[Pseudo-magnetic Fields and Effective Dynamics]{Pseudo-magnetic Fields and Effective Dynamics in Strained Honeycomb Structures}

\author[1]{\fnm{Chengyu} \sur{Zhang}}\email{zhang-cy23@mails.tsinghua.edu.cn}

\author[2]{\fnm{Borui} \sur{Miao}}\email{mbr@mail.tsinghua.edu.cn}

\author*[2,3]{\fnm{Yi} \sur{Zhu}}\email{yizhu@tsinghua.edu.cn}

\affil[1]{\orgdiv{Qiuzhen College}, \orgname{Tsinghua University}, \orgaddress{\city{Beijing}, \postcode{100084}, \country{China}}}

\affil[2]{\orgdiv{Yau Mathematical Sciences Center}, \orgname{Tsinghua University}, \orgaddress{\city{Beijing}, \postcode{100084}, \country{China}}}

\affil[3]{\orgname{Beijing Institute of Mathematical Sciences and Applications}, \orgaddress{\city{Beijing}, \postcode{101408}, \country{China}}}

\abstract{Strain offers an effective method for generating pseudo-magnetic fields in optical and acoustic materials, thereby enabling precise manipulation of wave propagation. In this article, we investigate wave packets spectrally localized near Dirac points in strained honeycomb-structured media and rigorously justify their long-time effective dynamics. We show that the envelope dynamics is governed by a two-dimensional Dirac equation with nontrivial gauge fields and prove that the associated two-scale ansatz approximates the exact wave evolution with error $O(\varepsilon)$ in $H^s$ for $0\le t\le \rho\varepsilon^{-1}$. Two difficulties distinguish this problem from standard wave-packet justifications. First, strain deforms the principal part of the wave operator, so the residual contains second-order differential terms that are not controlled by the unperturbed wave energy. Second, for a vanishing potential, the spectrum of the strained operator is not bounded away from zero, and a direct Duhamel estimate on the low-energy spectral subspace produces an apparent secular growth. We overcome the first difficulty by evolving with the strained operator and comparing regularized spectral projections for the strained and unperturbed operators through norm-resolvent estimates and functional calculus. For the second, we isolate the leading low-energy forced response through an explicit resolvent construction. Together, these results establish a rigorous continuum-wave theory of strain-induced pseudo-magnetic Dirac dynamics for slowly deformed honeycomb media, including perturbations of the principal part and the physically relevant zero-potential regime. More broadly, the spectral strategy may be useful for other linear systems with perturbations acting at the highest differential order.}

\keywords{strained honeycomb structures, pseudo-magnetic fields, Dirac points, wave packets, effective dynamics}

\pacs[MSC Classification]{35L05, 35B27, 35P05, 35Q41}

\maketitle

\section{Introduction} \label{Sec:Intro}
Graphene has attracted growing interest across many scientific fields because of its remarkable transport properties \cite{haldane2008photonic,hsu2016tci,neto2009graphene,wallace1947band}, originating from conical band crossings, termed ``Dirac points", in its band structure. It is well understood that these conical points are protected by honeycomb symmetry, which has motivated studies of artificial graphene, namely quantum-mechanical, photonic, and phononic analogs of graphene. A rigorous mathematical analysis of honeycomb structures was initiated in \cite{fefferman2012honeycomb}, where Fefferman and Weinstein proved the generic existence of Dirac points in honeycomb media. These results were later extended to general elliptic operators \cite{lee2019elliptic} and to the sub-wavelength regime \cite{ammari2020honeycomb}. One of the most important facts is that electrons behave as two-dimensional massless Dirac fermions in honeycomb media \cite{ablowitz2009conical}, and later studies in wave-packet dynamics justify the use of massless Dirac equations; see \cite{fefferman2014wave} for electrons, \cite{lee2019elliptic} for photonics, \cite{ammari2020highfrequency,ammari2024wavepackets} for phononics in the sub-wavelength regime, \cite{arbunich2016rigorous,xie2021wave} for nonlinear problems, and \cite{watson2018wavepackets} for inhomogeneous periodic media. These significant behaviors have prompted a wide range of applications.


A particularly striking way of manipulating Dirac waves is through strain. It is well understood that a nonuniform strain applied to a honeycomb structure can induce pseudo-magnetic fields near the Dirac points \cite{guinea2010strain,levy2010pseudomagnetic,manes2013generalized,amorim2016strains}. The associated pseudo-magnetic field is not a real magnetic field: it acts with opposite signs at the two time-reversal-related Dirac points and therefore preserves time-reversal symmetry \cite{hsu2020nanoscale}. Notably, such pseudo-magnetic fields can have remarkably large field strengths, and can be designed to be uniform on sufficiently large regions, resulting in highly degenerate pseudo-Landau levels; see \cite{guglielmon2021landau}, where the relation between the induced pseudo-magnetic fields and the imposed strain is computed explicitly.  Representative honeycomb and strained honeycomb structures are shown in \Cref{fig:honeycombStructure,fig:strainedHoneycombStructure}.

\begin{figure}[h]
	\centering
	\includegraphics[width=\linewidth]{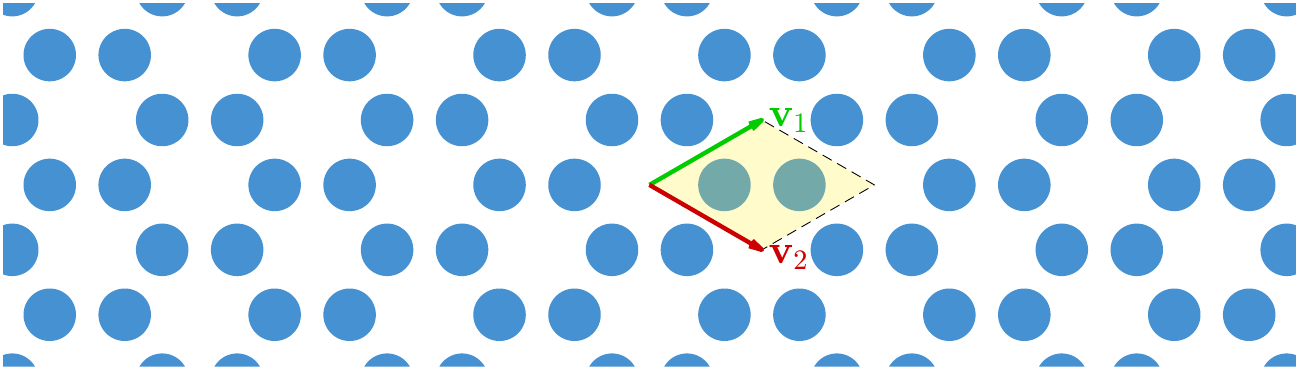}
	\caption{A honeycomb structure, with a fundamental cell of the honeycomb lattice shaded}
	\label{fig:honeycombStructure}
\end{figure}
\begin{figure}[h]
	\centering
	\includegraphics[width=\linewidth]{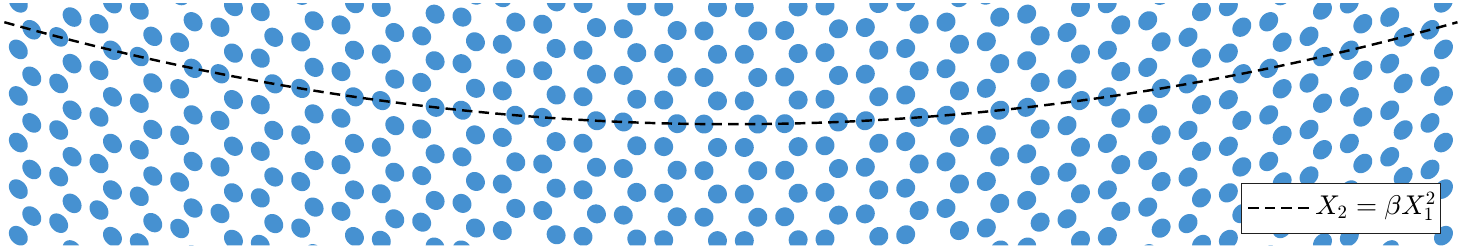}
	\caption{A strained honeycomb structure generated by the deformation $\bu(\bX) = (0,\beta X_1^2)$}
	\label{fig:strainedHoneycombStructure}
\end{figure}

The mechanism is particularly significant in photonic and acoustic artificial graphene: pseudo-magnetic fields also arise and remain strong in these settings, even though photons and phonons do not carry electric charge and do not couple directly to real magnetic fields. This significant phenomenon allows the realization of pseudo-Landau levels \cite{schomerus2013parity}, which enhances light-matter interaction through the large density of states, even in electrically neutral wave systems. Subsequent experiments have directly imaged pseudo-Landau levels and their $\sqrt{n}$ spacing \cite{barsukova2024direct}. Numerous experiments \cite{rechtsman2013pseudomagnetic,jamadi2020photoniclandau,salerno2015how,bellec2020supersymmetric} and applications \cite{umucalilar2011artificial,hafezi2011robust,fang2012realizing,fang2017generalized} of pseudo-magnetic fields in optics can also be found. Recent rigorous studies in discrete tight-binding settings justify macroscopic wave-packet models with strain-induced synthetic magnetic potentials \cite{bal2026macroscopic} and establish pseudo-magnetic localization with nearly flat Landau-level spectra in strained honeycomb lattices \cite{li2026pseudomagnetism}.

Despite these recent advances for discrete tight-binding models, a fully rigorous mathematical framework for the underlying strained continuum wave equation remains largely absent and is needed to put this physical picture on solid mathematical ground. The present article takes up this problem by developing a rigorous analysis of the underlying strained wave equation. As we proceed further, several fundamental difficulties in providing a complete justification of the predicted pseudo-magnetic effects will become apparent. We shall introduce some new tools and methods to overcome the challenges.

 The goal of this article is to give a rigorous derivation of the pseudo-magnetic effective dynamics induced by strain in honeycomb media. This formulation is motivated in particular by photonic and acoustic artificial graphene, where the relevant fields are naturally governed by wave equations rather than by Schr\"odinger dynamics. The wave setting also brings in several analytic features that are absent from the usual Schr\"odinger or lower-order perturbative frameworks, and these features will play an essential role in the proof. As in \cite{guglielmon2021landau}, introduce a deformation $\bu: \rN^2 \to \rN^2$, $\bT_\varepsilon(\by) = \by + \bu(\varepsilon \by)$  and a pair of honeycomb structured media $(A,V)$, see \Cref{DefHoneycombStructure}. We define the \textbf{strained honeycomb structure}
\begin{equation*}
	\begin{aligned}
		A^{(\varepsilon)}(\bx) &:= A(\mathbf{T}_\varepsilon^{-1}(\bx)), \\
		V^{(\varepsilon)}(\bx) &:= V(\mathbf{T}_\varepsilon^{-1}(\bx)).
	\end{aligned}
\end{equation*}
In this continuum-wave setting, with $V\ge0$ so that the associated elliptic operator is nonnegative, the photonic or acoustic fields in strained honeycomb media are modeled by the following wave equation
\begin{equation}
\label{WaveEquation}
	\frac{\partial^2 \psi}{\partial t^2} - \nabla_\bx \cdot \left( A^{(\varepsilon)}(\bx) \nabla_\bx \psi \right) + V^{(\varepsilon)}(\bx)\psi =0.
\end{equation}
 We prove that for initial data that are spectrally concentrated at a Dirac point $(\bK,E_D)$, see \Cref{DefDiracPoint}, the envelopes of the wave-packet satisfy the following Dirac equation with a gauge field:
\begin{equation}
\label{EffectiveDynamics}
	\mi\partial_T \boldsymbol{\alpha} = v\left[ (\mi \partial_1 + A_1)\sigma_1 + (-\mi\partial_2 - A_2)\sigma_2\right] \boldsymbol{\alpha} + W_{\mathrm{eff}}\sigma_0 \boldsymbol{\alpha},
\end{equation} 
where, after suitably rotating coordinates, we take $\mu\in\rN$ \cite{guglielmon2021landau}, and $A_1,A_2,W_{\mathrm{eff}}$ are given by
\begin{equation}
\label{DefPseudoElectromagneticFieldsI}
\begin{aligned}
	v &= \frac{\nu_{_F}}{2\sqrt{E_D}}, \\
	W_{\mathrm{eff}} &= -\frac{1}{2\sqrt{E_D}} \left(\xi \operatorname{Tr}(U\sigma_0) + \xi^\#\operatorname{Tr}(U\sigma_2)\right), \\
	A_1 &= - \frac{\mu}{\nu_{_F}}\mathrm{Tr}(U\sigma_3), \\
	A_2 &=  \frac{\mu}{\nu_{_F}}\mathrm{Tr}(U\sigma_1).
\end{aligned}
\end{equation}
Here, $U:= D \bu$ is its differential and the constants $\nu_{_F},\mu,\xi,\xi^\#$, depending only on the honeycomb structures $(A,V)$, are defined in \Cref{SecDPSO}.

To be more precise, we go back to the original space coordinate and consider the unitary transform
\begin{equation*}
	\begin{array}{rccc}
		\mathcal{U}_\varepsilon : & L^2(\rN^2_{\bx}) & \longrightarrow & L^2(\rN^2_{\by}), \\
		& f(\bx) & \longmapsto & J_\varepsilon(\by)^{-1/2} f(\bT_\varepsilon(\by)),
	\end{array}
\end{equation*}
where $M_\varepsilon(\by)$ is the inverse deformation gradient and $J_\varepsilon(\by)$ is its determinant:
\begin{equation*}
	M_\varepsilon(\by) := (D_\bx\bT_\varepsilon^{-1})(\bT_\varepsilon(\by)) = (D_\by\bT_\varepsilon(\by))^{-1},\qquad
	J_\varepsilon(\by):=\det M_\varepsilon(\by).
\end{equation*}
Since $D_\by\bT_\varepsilon(\by)=I+\varepsilon U(\varepsilon \by)$ is uniformly close to the identity for sufficiently small $\varepsilon>0$, we have $J_\varepsilon(\by)>0$.
Then, with $\varphi = \mathcal{U}_\varepsilon \psi$, equation \eqref{WaveEquation}, further imposed by initial data spectrally concentrated at a Dirac point, becomes
\begin{equation}
\label{DeformedGoverningEquation}
	\left \{\begin{aligned}
	\frac{\partial^2 \varphi}{\partial t^2}
	&- J_\varepsilon(\by)^{\frac{1}{2}}\!\nabla_\by\!\cdot\!
	\frac{M_\varepsilon(\by)A(\by)M_\varepsilon^{T}(\by)}{J_\varepsilon(\by)}
	\nabla_\by\!\left(J_\varepsilon(\by)^{\frac{1}{2}}\!\varphi\right)
	+ V(\by)\varphi = 0, \\
	\varphi(\by,0) &= \varepsilon J_\varepsilon(\by)^{-1/2}\!
	\left[ \alpha_{10}(\varepsilon\by)\Phi_1(\by)+\alpha_{20}(\varepsilon\by)\Phi_2(\by) \right], \\
	\frac{\partial\varphi}{\partial t}(\by,0) &= \varepsilon \mi \sqrt{E_D}J_\varepsilon(\by)^{-1/2}\!
	\left[ \alpha_{10}(\varepsilon\by)\Phi_1(\by)+\alpha_{20}(\varepsilon\by)\Phi_2(\by) \right].
\end{aligned}\right.
\end{equation}
Here $\Phi_1,\Phi_2$ are the eigenmodes associated with the Dirac point, see \Cref{DefDiracPoint}.

The approximate solution is
\begin{equation}
\label{TheAnsatz}
	\varphi_{\mathrm{eff}}(\by,t) = \varepsilon J_\varepsilon(\by)^{-1/2} \me^{\mi \sqrt{E_D}t} \left[ \alpha_1(\varepsilon \by,\varepsilon t)\Phi_1(\by) + \alpha_2(\varepsilon \by,\varepsilon t)\Phi_2(\by)\right],
\end{equation}
and $\boldsymbol{\alpha}:=(\alpha_1,\alpha_2)^T$ satisfies the prescribed effective equation \eqref{EffectiveDynamics}. As our main results, we prove in \Cref{MainTheorem1,MainTheorem2} that, for sufficiently small $\varepsilon>0$ and $V \ge 0$, the solution to \eqref{DeformedGoverningEquation} can be approximated by $\varphi_{\mathrm{eff}}$ over a large but finite time scale $t=O(\varepsilon^{-1})$ in the sense that
\begin{equation*}
	\sup_{t \in [0,\rho\varepsilon^{-1}]}\| \varphi(t) - \varphi_{\mathrm{eff}}(t)\|_{H^s} \lesssim \varepsilon.
\end{equation*}

Existing rigorous derivations of effective Dirac dynamics in honeycomb media are largely built around a fixed periodic operator. In the unperturbed Schr\"odinger, photonic, and phononic settings, the Floquet-Bloch decomposition of the background honeycomb operator remains the main object, and the effective Dirac equation cancels the leading resonant contribution associated with the two Bloch bands meeting at the Dirac point. In perturbed problems, the perturbations treated in this framework are typically lower-order and can be placed in the source term of the error equation. The leading near-Dirac contribution is cancelled by the effective dynamics. The additional source generated by the perturbation can still be controlled because the wave propagator contributes a factor $\left(\mL^{(0)}\right)^{-1/2}$, lowering the differential order by one \cite{xie2019wave}. Thus the unperturbed Floquet-Bloch decomposition and the wave energy associated with the background operator remain available. In contrast, the current work considers an elliptic operator
\[
-\nabla\cdot A^{(\varepsilon)}(x)\nabla+V^{(\varepsilon)}(x)
\]
which, after passing to strained coordinates, exhibits a second-order linear differential perturbation of the original honeycomb operator. This changes the principal part of the operator and introduces substantial challenges in obtaining rigorous error estimates. Indeed, the wave energy defined through the unperturbed operator is affected at the highest differential order and hence no longer represents the true energy of the strained system; see \Cref{SecMR}.

There is also an additional difficulty specific to the wave equation. For the usual Dirac wave-packet dynamics of a fixed periodic wave operator, the relevant spectral window is centered at a positive Dirac energy and is separated from zero, so the factor $\lambda^{-1/2}$ in the wave propagator is harmless. In some low-frequency wave-packet problems, the small Bloch frequency is tied to an additional physical parameter and can be absorbed into the corresponding scaling estimates. The present strained problem has a different low-energy obstruction: the physically natural case $V\equiv0$ places zero in the spectrum of $\mL^{(\varepsilon)}$. Although the wave packet is centered at the positive frequency $\sqrt{E_D}$, the residual source has a low-energy component with respect to $\mL^{(\varepsilon)}$. A direct Duhamel estimate therefore produces an artificial secular growth, which is insufficient to close the approximation on the long time scale considered here.

To overcome these challenges, we develop a spectral approach centered on the strained operator itself. Rather than treating the strained part as an external source and evolving the error only with the unperturbed honeycomb operator, we keep $\mL^{(\varepsilon)}$ as the generator of the wave dynamics. The relevant spectral windows of $\mL^{(\varepsilon)}$ are compared with those of $\mL^{(0)}$ through norm-resolvent estimates, regularized spectral projections, and functional calculus. This allows us to isolate the near-Dirac contribution, transfer it to the unperturbed Bloch representation where the effective Dirac equation cancels the leading resonant term, and control the remaining spectral components. The method is particularly suited to the present problem because the highest-order part of the perturbation is kept inside the principal operator rather than being moved to the source term in the error equation. We expect this strategy to be useful for other linear systems involving higher-order perturbations.

For the zero-potential case, we do not estimate the singular low-energy wave kernel directly. Instead, we introduce an auxiliary low-energy response that captures the leading forced dynamics on the low-energy spectral subspace. This shows that the secular growth produced by a direct Duhamel estimate is artificial: the oscillation at $\sqrt{E_D}$ cancels the leading low-energy behavior. This auxiliary construction closes the approximation on the long time scale $t=O(\varepsilon^{-1})$.

The rest of the article is organized as follows. \Cref{SecDPSO} serves as a preliminary, where we review the notions of Floquet-Bloch theory, the honeycomb structure and the Dirac points, and record some basic facts about the operator $\mL^\bracketeps$. In \Cref{SecMR}, we formulate the main setups and theorems and discuss the main analytical difficulties, followed by the proofs in \Cref{SecDFEE,SecCR,SecTS0}. To be more precise, in \Cref{SecDFEE} we decompose the equation satisfied by the error and handle the straightforward parts, while in \Cref{SecCR} we deal with the resonance source. These two sections together complete the proof of the first case $V\ge0,V \not \equiv
 0$. The second case $V \equiv 0$ is treated in \Cref{SecTS0}. \Cref{SecDDE} is devoted to discussing transformations of the effective Dirac equation and the derivation of Landau levels. In addition, the detailed computations of asymptotic expansions can be found in Appendices \ref{AppAEO} and \ref{AppAEEE}. Finally, we also include the Schr\"odinger equation case in Appendix \ref{AppSE}, for completeness.

The following notations and conventions are used in this article.
\begin{enumerate}
	\item We use bold letters such as $\bx,\bv,\bk,\bY,\boldsymbol{\alpha}$ et al. to represent values with multiple components. For example, $\bx=(x_1,x_2)^T$ are the coordinates in $\rN^2$, and $\bv_1,\bv_2$ are some specific vectors. $\boldsymbol{\alpha}$ is a vector-valued function with components $\alpha_1$ and $\alpha_2$. $\mathbf{0}$ is the zero element of $\rN^2$ or $\zN^2$.
	\item $C^\infty$ is the space of smooth functions. $C_b^\infty$ consists of smooth functions such that all of their derivatives are bounded. $C_c^\infty$ are smooth functions that are compactly supported. $\mathcal{S}(\rN^2)$ is the Schwartz space and $\mathcal{S}'(\rN^2)$ is the tempered distribution.
	\item For a Lebesgue measurable set $M \subset \rN^2$, we use $|M|$ to denote its Lebesgue measure.
	\item $L^2(\rN^2)$ and $H^s(\rN^2),W^{N,1}(\rN^2),s \in \nN$ are standard Sobolev spaces, and we also drop the base space $\rN^2$ for simplicity.
	\item In the rest of the article, $\nabla = (\partial_{y_1},\partial_{y_2})^T$ is the gradient operation. $\nabla \cdot$ is the divergence operation. Differentials with respect to the slow spatial variable $\bY$ will be denoted by $\nabla_\bY = (\partial_{Y_1},\partial_{Y_2})^T$. For a map $\mathbf{F} = (F_1,F_2):\rN^2 \to \rN^2$, we use $D\mathbf{F}$ to denote its differential $D\mathbf{F} = (\partial_j F_i)_{ij}$.
	\item If $S$ is a self-adjoint operator and $f$ is a Borel-measurable function on $\rN$, we use $f(S)$ to denote the Borel functional calculus.
	\item The Pauli matrices
		\begin{equation*}
			\sigma_0 = \begin{pmatrix}
				1&0 \\
				0&1
			\end{pmatrix}, \quad \sigma_1 = \begin{pmatrix}
				0&1 \\
				1&0
			\end{pmatrix}, \quad \sigma_2 = \begin{pmatrix}
				0&-\mi \\
				\mi&0
			\end{pmatrix}, \quad \sigma_3 = \begin{pmatrix}
				1&0 \\
				0&-1
			\end{pmatrix}.
		\end{equation*}
	\item The notation ``$F \lesssim_\beta G$" means that there exists a constant $C_\beta$ depending only on some parameter $\beta$ such that $F \le C_\beta G$ holds for all the other parameters appearing in the expressions. $F \approx_\beta G$ if and only if both $F \lesssim_\beta G$ and $G \lesssim_\beta F$ hold.
	\item The Einstein summation convention is used throughout. The product $A_iB_i$ should be understood as the natural product of the factors, namely multiplication when they are numbers, the dot product when they are vectors, and the Frobenius product ($A:B = \operatorname{Tr}(A^TB)$) when they are matrices.
\end{enumerate}

\section{Dirac Points and the Strained Operator}
\label{SecDPSO}
In this section, we give some preliminary facts about honeycomb structures. First, we present an overview of the Floquet-Bloch theory as the basic tool to clarify the notion of energy bands, dispersion relations, and Dirac points. We refer to \cite{reed1978iv,eastham1973spectral,kuchment2012floquet,kuchment2016overview} for several authoritative references. Next we introduce the honeycomb structures and review from \cite{fefferman2012honeycomb,fefferman2014wave,lee2019elliptic} the existence of Dirac points and the corresponding Bloch eigenfunctions. Basic facts about the strained operator $\mL^\bracketeps$ are displayed in the last subsection.

\subsection{Floquet-Bloch theory}
The Floquet-Bloch theory is used to decompose a periodic system into a direct integral of systems so that each fiber admits an orthonormal basis of Bloch eigenfunctions. Here we work only in $\rN^2$, although the whole theory applies to $\rN^n$.

Suppose a lattice $\Lambda$ is generated by two linearly independent vectors $\bv_1$ and $\bv_2$ with a fundamental cell $\Omega :=\left\{ \theta_1 \bv_1+\theta_2 \bv_2 : 0\le \theta_i < 1, \text{ } i=1,2\right\}.$ The dual (or reciprocal) lattice $\Lambda^*$ of $\Lambda$ is defined to be generated by $\bk_1,\bk_2$ satisfying $\bv_i \cdot \bk_j = 2\pi \delta_{ij}$. The (first) \textbf{Brillouin zone} $\mathbb{B}$ is, by definition, the closure of the set consisting of vectors that are closer to $\mathbf{0}$ than any other translation (by $\Lambda^*$) of themselves. The Brillouin zone is equivalent to a fundamental cell of $\Lambda^*$, up to $\Lambda^*$-translations. For any $\Lambda^*$-periodic integrand, integrals over any fundamental cell of $\Lambda^*$ can be taken over $\mathbb{B}$ instead.

We next introduce the function spaces of $\mathbf{k}$-quasi-periodic functions:
\begin{equation*}
	L^2_{\mathbf{k},\Lambda} = \left\{ f \in L^2_{loc}(\rN^2) : f(\mathbf{x}+\mathbf{v})=\me^{\mi \bk \cdot \bv}f(\mathbf{x}),\forall \mathbf{x} \in \rN^2,\forall \mathbf{v} \in \Lambda\right\}.
\end{equation*}
When there is no ambiguity, we also omit the subscript $\Lambda$. Equipped with $L^2(\Omega)$-inner products, they are made to be Hilbert spaces. In the same way, we can introduce the Sobolev spaces $H^s_{\bk,\Lambda}$.

The essential ingredient is a decomposition of $L^2(\rN^2)$ into $L_\bk^2$'s, through the so-called \textbf{Bloch transform} 
\begin{equation}
	\mathcal{B}f(\bx;\bk) = \frac{1}{|\mathbb{B}|^\frac{1}{2}} \sum_{\bv \in \Lambda} \me^{\mi \bk \cdot \bv} f(\bx - \bv),\quad \bx \in \Omega,\bk \in \mathbb{B},
\end{equation} 
which is initially defined on $\mathcal{S}(\rN^2)$, into the space $L^2(\mathbb{B};L^2_\bk)$ and then extends unitarily to the whole $L^2(\rN^2)$. The space $L^2(\mathbb{B};L^2_\bk)$ consists of $L^2$-sections of the bundle $\bigsqcup_{\bk \in \mathbb{B}} L^2_\bk \to \mathbb{B}$, whose elements can also be regarded as functions of both $\bx$ and $\bk$. If $f \in L^2(\mathbb{B};L^2_\bk)$, for fixed $\bk \in \mathbb{B}_h$ we denote by $f(\bk)$ the asserted $L^2_\bk$-vector by $f$ at $\bk$. The Bloch transform realizes $L^2(\rN^2)$ as a direct integral (see \cite{reed1978i} for the definition) of $L^2_\bk$'s, meaning that
\begin{equation}
\label{BlochInverse}
	L^2(\rN^2) = \int_{\bk \in \mathbb{B}}^\oplus L^2_\bk, \qquad f(\bx) = \frac{1}{|\mathbb{B}|^\frac{1}{2}}\int_\mathbb{B} \mathcal{B}f(\bx;\bk)\md \bk, \quad \forall f \in L^2(\rN^2).
\end{equation}

Let $H = -\nabla_\bx \cdot A(\bx) \nabla_\bx + V(\bx)$ be a second-order elliptic differential operator, with coefficients $A$ and $V$ being smooth and $\Lambda$-periodic. Conjugating by $\mathcal{B}$ we can determine what $H$ do to elements in $L^2(\mathbb{B};L^2_\bk)$:
\begin{equation*}
	\left( \mathcal{B}H\mathcal{B}^{-1} g \right)(\bk) =H \left( g(\bk)\right).
\end{equation*}
Roughly speaking, each $L^2_\bk$ is invariant under $H$, whose restrictions to these spaces are denoted by $H(\bk)$. Each $H(\bk)$ is elliptic on a bounded region with $\bk$-quasi-periodic boundary condition, resulting in compact resolvents, and thus has a discrete spectrum and admits a Hilbert-Schmidt style decomposition
\begin{equation*}
	H(\bk)\Phi_b(\bk) = E_b(\bk)\Phi_b(\bk).
\end{equation*}
Here, $\{ \Phi_b(\bk) \}_{b=1}^\infty$ is some orthonormal basis of $L^2_\bk$, called \textbf{Bloch modes}, and $E_b(\bk)$ are arranged in a non-decreasing order $E_1(\bk) \le E_2(\bk) \le \cdots \le E_b(\bk) \le \cdots$.

As $\bk$ varies in $\mathbb{B}$, each $E_b$ sweeps out a surface, so $E_b$'s are often called  \textbf{Bloch bands}. The following regularity result of Bloch bands is proved in the appendix of \cite{fefferman2014wave}:
\begin{proposition}
\label{LipschitzContinuity}
	The Bloch bands $\{ E_b \}_{b=1}^\infty $ of such an $H$ are Lipschitz continuous.
\end{proposition}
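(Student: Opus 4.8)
The plan is to reduce the family $\{H(\bk)\}_{\bk\in\mathbb B}$ to a single, $\bk$-independent Hilbert space and then read off Lipschitz continuity from the min--max characterization of the eigenvalues, exploiting the fact that the transformed operator depends on $\bk$ only polynomially.

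First I would remove the $\bk$-dependence of the fibers. Conjugating $H(\bk)$ by the unitary multiplication $u\mapsto \me^{-\mi\bk\cdot\bx}u$, which maps $L^2_\bk$ onto the fixed space $L^2_{\mathbf 0}$ of $\Lambda$-periodic $L^2_{loc}$ functions, gives the unitarily equivalent operator $\widetilde H(\bk) = -(\nabla_\bx+\mi\bk)\cdot A(\bx)(\nabla_\bx+\mi\bk)+V(\bx)$ on $L^2_{\mathbf 0}$, with the same eigenvalues $E_b(\bk)$. Its quadratic form
\begin{equation*}
q_\bk(u) = \int_\Omega\Big[ A(\bx)(\nabla_\bx+\mi\bk)u\cdot\overline{(\nabla_\bx+\mi\bk)u} + V(\bx)|u|^2\Big]\,\md\bx
\end{equation*}
has the $\bk$-independent form domain $H^1_{\mathbf 0}$ and is an (at most) quadratic polynomial in $\bk$. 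Expanding the square and using boundedness of $A$ on $\Omega$, for $\bk,\bk'\in\mathbb B$ one gets the form-level Lipschitz bound $|q_\bk(u)-q_{\bk'}(u)| \le C_0\,|\bk-\bk'|\,\|u\|_{H^1(\Omega)}^2$, where $C_0$ depends only on $\|A\|_{C^0(\Omega)}$ and $\operatorname{diam}\mathbb B$ (every term carrying a factor of $\bk-\bk'$ carries at most one factor of $\nabla_\bx u$, hence is controlled by $\|u\|_{H^1}^2$).

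Next I would record two uniform facts. By uniform ellipticity of $A$ there is a Gårding inequality $c_1\|u\|_{H^1(\Omega)}^2 \le q_\bk(u)+C_1\|u\|_{L^2(\Omega)}^2$ with $c_1>0$ and $C_1$ independent of $\bk\in\mathbb B$ (fixing the fiber in the first step is what keeps the ellipticity constant from degenerating as $\bk$ moves); and, testing $q_\bk$ against a fixed $b$-dimensional subspace, $E_b(\bk)\le M_b$ for all $\bk\in\mathbb B$ for some $M_b<\infty$. Now fix $b$ and $\bk,\bk'\in\mathbb B$, and let $S$ be the span of the first $b$ Bloch modes $\Phi_1(\bk'),\dots,\Phi_b(\bk')$ of $\widetilde H(\bk')$. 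Any $0\ne u\in S$ satisfies $q_{\bk'}(u)\le E_b(\bk')\|u\|_{L^2}^2$, so by Gårding $\|u\|_{H^1}^2\le K_b\|u\|_{L^2}^2$ with $K_b:=c_1^{-1}(M_b+C_1)$; combined with the form-level bound this gives
\begin{equation*}
\frac{q_\bk(u)}{\|u\|_{L^2}^2} \le E_b(\bk') + C_0K_b\,|\bk-\bk'| \qquad \text{for all } 0\ne u\in S.
\end{equation*}
Since $\dim S=b$, the min--max principle yields $E_b(\bk)\le E_b(\bk')+C_0K_b|\bk-\bk'|$; exchanging $\bk\leftrightarrow\bk'$ gives $|E_b(\bk)-E_b(\bk')|\le C_0K_b|\bk-\bk'|$, and $\Lambda^*$-periodicity of the bands upgrades this to global Lipschitz continuity (with a $b$-dependent constant).

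The only step that is not pure bookkeeping is the passage from the $H^1$-norm to the $L^2$-norm on the trial space $S$: it hinges on the \emph{uniform} Gårding inequality and on the a priori band-by-band upper bound $E_b(\bk)\le M_b$, which is precisely why one must pass to the fixed fiber before estimating. An alternative and essentially equivalent route is to note that $\bk\mapsto\widetilde H(\bk)$ is an analytic family of type (A) in Kato's sense and invoke analytic perturbation theory for the total projection onto a group of bands, but the min--max argument above is shorter and self-contained.
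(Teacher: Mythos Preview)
Your argument is correct. The paper does not supply its own proof of this proposition; it simply cites the appendix of \cite{fefferman2014wave}, and the min--max argument you give---conjugating to the fixed fiber $L^2_{\mathbf 0}$, bounding $|q_\bk(u)-q_{\bk'}(u)|$ by $C_0|\bk-\bk'|\,\|u\|_{H^1}^2$, and then converting the $H^1$-norm to the $L^2$-norm on the trial space via the uniform G{\aa}rding inequality and the a~priori bound $E_b(\bk)\le M_b$---is precisely the standard route taken there.
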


Until now, we have completely ``diagonalized" the operator $H$, and hence its spectrum is the union of the numbers appearing on the diagonal. Moreover, for $f \in L^2(\rN^2)$ each $\mathcal{B}f(\bk)$ can be expanded along the basis $\{ \Phi_b(\bk) \}_{b=1}^\infty$. Substituting this expansion back into \eqref{BlochInverse} we obtain the so-called Floquet-Bloch decomposition
\begin{equation*}
	f(\bx) = \frac{1}{|\mathbb{B}|^{\frac{1}{2}}} \sum_{b=1}^\infty \int_\mathbb{B} \langle \Phi_b(\bk),\mathcal{B}f(\bk)\rangle_{L^2_\bk} \Phi_b(\bx;\bk)\md \bk.
\end{equation*}
Bringing in the definition of the Bloch transform, we find that
\begin{equation}
\label{AbbreviationOfInnerProductAgainstPhi}
	\langle \Phi_b(\bk),f\rangle_{L^2(\rN^2)} := \int_{\rN^2} \overline{\Phi_b(\bx;\bk)}f(\bx)\md \bx = |\mathbb{B}_h|^{\frac{1}{2}} \langle \Phi_b(\bk),\mathcal{B}f(\bk)\rangle_{L^2_\bk},
\end{equation}
and hence obtain the most commonly seen form of Floquet-Bloch decomposition \eqref{FBDecomposition2}.

We summarize the Floquet-Bloch theory as follows.
\begin{proposition}
	The spectrum of $H$ is the union of all $E_b(\bk)$'s: $\sigma(H) = \bigcup_{\bk \in \mathbb{B}} \sigma(H(\bk)) = \bigcup_{\bk \in \mathbb{B}}\bigcup_{b=1}^\infty \{E_b(\bk)\}$. Moreover, the set of functions
		$\{ \Phi_b(\bk) \}_{\bk \in \mathbb{B},b \in \zN^+}$ is complete in $L^2(\rN^2)$, in the sense that each $f \in L^2(\rN^2)$ can be written in the form
	\begin{equation}
	\label{FBDecomposition2}
			f = \frac{1}{|\mathbb{B}|} \sum_{b=1}^\infty \int_\mathbb{B} \langle \Phi_b(\bk),f\rangle_{L^2(\rN^2)} \Phi_b(\bk)\md \bk,
	\end{equation}
	where the series converges in the $L^2(\rN^2)$ sense.
\end{proposition}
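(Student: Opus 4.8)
The plan is to read off both assertions from the machinery just recalled: the unitarity of the Bloch transform $\mathcal{B}\colon L^2(\rN^2)\to L^2(\mathbb{B};L^2_\bk)$ and the fact that $\mathcal{B}$ decomposes $H$ into the direct integral of the fiber operators $H(\bk)$. The first point to pin down is that $\mathcal{B}$, defined initially on $\mS(\rN^2)$, is an isometry with dense range whose inverse is the reconstruction formula (\ref{BlochInverse}), so that it extends to a unitary isomorphism onto $L^2(\mathbb{B};L^2_\bk)$. For $f\in\mS(\rN^2)$ the defining series converges absolutely, and a Parseval computation over the fundamental cell $\Omega$ (equivalently, Poisson summation) gives $\int_\mathbb{B}\|\mathcal{B}f(\bk)\|_{L^2_\bk}^2\,\md\bk=\|f\|_{L^2(\rN^2)}^2$; density of the range and the identity $\mathcal{B}^{-1}\mathcal{B}=\Id$ on $\mS(\rN^2)$ are likewise elementary, and the claim follows by continuous extension.

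Next I would verify that $\mathcal{B}$ fibers $H$. Since $A$ and $V$ are $\Lambda$-periodic, $H$ commutes with the lattice-translation sum defining $\mathcal{B}$, so that $\mathcal{B}(Hf)(\bx;\bk)=H_\bx\bigl(\mathcal{B}f(\bx;\bk)\bigr)$ and every $L^2_\bk$ is $H$-invariant; the restriction $H(\bk)$ is a uniformly elliptic second-order operator on the bounded cell $\Omega$ with $\bk$-quasi-periodic boundary conditions, hence self-adjoint, bounded below, with compact resolvent, so that it has the discrete spectrum $E_1(\bk)\le E_2(\bk)\le\cdots\to+\infty$ and the $L^2_\bk$-orthonormal eigenbasis $\{\Phi_b(\bk)\}_{b\ge1}$ used above. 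The spectral identity is then the direct-integral spectral theorem (see, e.g., \cite{reed1978iv,kuchment2012floquet}), which gives $\sigma(H)=\overline{\bigcup_{\bk\in\mathbb{B}}\sigma(H(\bk))}$; the closure is superfluous here, because by \Cref{LipschitzContinuity} each $E_b$ is continuous on the compact set $\mathbb{B}$, so $E_b(\mathbb{B})$ is a compact interval, while the min--max principle forces $\min_{\bk\in\mathbb{B}}E_b(\bk)\to+\infty$, so that every bounded subset of $\rN$ meets only finitely many of the $E_b(\mathbb{B})$ and the union $\bigcup_{\bk,b}\{E_b(\bk)\}$ is already closed.

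For the completeness statement, fix $f\in L^2(\rN^2)$ and set $c_b(\bk):=\langle\Phi_b(\bk),\mathcal{B}f(\bk)\rangle_{L^2_\bk}$. For a.e.\ $\bk\in\mathbb{B}$ one has $\mathcal{B}f(\bk)=\sum_{b\ge1}c_b(\bk)\Phi_b(\bk)$ in $L^2_\bk$ with $\sum_{b\ge1}|c_b(\bk)|^2=\|\mathcal{B}f(\bk)\|_{L^2_\bk}^2$, and since this is an integrable function of $\bk$, dominated convergence shows the partial sums $\sum_{b\le N}c_b(\bk)\Phi_b(\bk)$ converge to $\mathcal{B}f$ in $L^2(\mathbb{B};L^2_\bk)$. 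Applying the continuous inverse $\mathcal{B}^{-1}$ and using (\ref{BlochInverse}), the functions $\bx\mapsto|\mathbb{B}|^{-1/2}\sum_{b\le N}\int_\mathbb{B}c_b(\bk)\Phi_b(\bx;\bk)\,\md\bk$ converge to $f$ in $L^2(\rN^2)$; rewriting $c_b(\bk)$ through the conversion identity (\ref{AbbreviationOfInnerProductAgainstPhi}) replaces $|\mathbb{B}|^{-1/2}c_b(\bk)$ by $|\mathbb{B}|^{-1}\langle\Phi_b(\bk),f\rangle_{L^2(\rN^2)}$, so these are exactly the partial sums of (\ref{FBDecomposition2}), which therefore converges in $L^2(\rN^2)$ as claimed.

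None of these steps is deep — the statement is classical Floquet-Bloch theory, and the real content is bookkeeping. I expect the only mildly delicate points to be the Parseval computation together with the density of the range in the first step, and the consistent tracking of the three normalizations (the $|\mathbb{B}|^{-1/2}$ in $\mathcal{B}$, the $|\mathbb{B}|^{-1/2}$ in (\ref{BlochInverse}), and the conversion (\ref{AbbreviationOfInnerProductAgainstPhi})) so that the prefactor in (\ref{FBDecomposition2}) comes out to be exactly $1/|\mathbb{B}|$; the observation that the closure is redundant in the spectral identity is the one place that needs a genuinely separate, albeit short, argument.
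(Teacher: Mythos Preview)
Your proposal is correct and follows essentially the same conceptual outline as the paper. In fact, the paper does not give a separate proof of this proposition at all: it is stated as a summary of the preceding discussion (``We summarize the Floquet-Bloch theory as follows''), with the substantive content deferred to the standard references \cite{reed1978iv,eastham1973spectral,kuchment2012floquet,kuchment2016overview}. The paragraphs before the proposition sketch exactly the ingredients you use --- unitarity of $\mathcal{B}$, the direct-integral decomposition $\mathcal{B}H\mathcal{B}^{-1}$, the compact-resolvent spectral theory of each $H(\bk)$, and the substitution of the fiberwise eigenbasis expansion back into (\ref{BlochInverse}) together with (\ref{AbbreviationOfInnerProductAgainstPhi}) --- so your write-up is simply a more explicit version of what the paper leaves implicit. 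The one point you add that the paper does not mention is the argument that the closure in $\sigma(H)=\overline{\bigcup_\bk\sigma(H(\bk))}$ is redundant, via \Cref{LipschitzContinuity} and the min--max growth of the bands; this is a nice touch and is indeed needed to get the statement exactly as written.
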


\subsection{Honeycomb Structure}
The honeycomb lattice $\Lambda_h$ is a triangular lattice generated by $\bv_1 \ne 0,\bv_2=-R^{-1}\bv_1$, with $\bk_1,\bk_2$ denoting their dual basis, and where $R$ is the $2\pi/3$ clockwise rotation matrix
\begin{equation*}
	R = \begin{pmatrix}
		-\frac{1}{2} & \frac{\sqrt{3}}{2} \\[7pt]
		-\frac{\sqrt{3}}{2} & -\frac{1}{2}
	\end{pmatrix}.
\end{equation*}
In this case, the Brillouin zone $\mathbb{B}_h$ is a regular hexagon in $\rN^2$, whose vertices will play an important role. The six vertices of $\mathbb{B}_h$ are split into two $C_3$-orbits represented by $\bK=\frac{1}{3}(\bk_1-\bk_2)$ and $\bK'=-\bK$. The vertices in the same orbit differ only by a $\Lambda^*$-translation, and moreover, the discussion of $\bK$ and $\bK'$ is analogous, so we will focus on $\bK$ for simplicity.

To define a honeycomb structured material, we first introduce some operators acting on function spaces. They are:
\begin{enumerate}
	\item \textbf{parity symmetry}: $\mathcal{P}[f](\by) = f(-\by),\forall \by \in \rN^2$.
	\item \textbf{complex conjugation}: $\mathcal{C}[f](\by) = \overline{f(\by)},\forall \by \in \rN^2$.
	\item \textbf{translation}: $\tau_\bv [f](\by) = f(\by-\bv),\bv \in \Lambda_h,\forall \by \in \rN^2$.
	\item and \textbf{rotation operator}: $\mathcal{R}[f](\by) = f(R^{-1}\by),\forall \by \in \rN^2.$
\end{enumerate}
With this, we can define the honeycomb structured media in a brief way.
\begin{definition}
\label{DefHoneycombStructure}
	A pair of honeycomb structured media is a dual of material weight $A(\by)$ (a $2 \times 2$-Hermitian-matrix-valued function that is uniformly elliptic) and a potential $V(\by)$ (a real-valued function), satisfying: \begin{enumerate}
		\item $A$ is $\mathcal{PC}$-symmetric, and $V$ is $\mathcal{P}$-symmetric, meaning that $\mathcal{PC}[A]=A$, $\mathcal{P}[V]=V$.
		\item $A$ satisfies $\mathcal{R}[A]=R^*AR$ and $V$ is $\mathcal{R}$-symmetric.
		\item both $A$ and $V$ are periodic w.r.t. $\Lambda_h$.
	\end{enumerate}
\end{definition}

Now let $(A,V)$ be a pair of honeycomb structured media, introduce a second-order elliptic operator $\mL = -\nabla \cdot A(\by) \nabla + V(\by)$. With the above properties, one can verify the following symmetries about $\mL$:
\begin{equation*}
	[\mathcal{PC,L}] = [\mathcal{R,L}] = [\tau_\bv,\mL] = 0,\quad \forall \bv \in \Lambda_h,
\end{equation*}
which, together with some mild non-degenerate condition, shall lead to the generic occurrence of a Dirac point.

\subsection{Dirac points}
Dirac points are momentum-energy pairs where two Bloch bands touch conically. Here we give a specific definition of Dirac points, taken from \cite{fefferman2012honeycomb,lee2019elliptic}. Put $\tau = \me^{\mi \frac{2\pi}{3}}$, which is a generator of eigenvalues of the operator $\mathcal{R}$.
\begin{definition}
\label{DefDiracPoint}
 Suppose $\bK^*$ is a vertex of the Brillouin zone $\mathbb{B}_h$. A pair $(\mathbf{K}^*,E_D) \in \mathbb{B} \times \rN$ of quasi-momentum and energy is called a Dirac point (of $\mL$, defined above), if there exists $b_*$ such that the two Floquet-Bloch band functions
	\begin{equation*}
		\bk \mapsto E_{b_*}(\bk) \text{  and  } \bk \mapsto E_{b_*+1}(\bk)
	\end{equation*}
	satisfy the following property. 
	\begin{enumerate}
		\item $E_D$ is an eigenvalue of multiplicity $2$ of $\mL$ acting on $L^2_{\bK^*}$.
		\item The eigenspace is generated by normalized $\Phi_1$ and $\Phi_2$, satisfying $\mathcal{R}\Phi_1 = \tau [\Phi_1], \Phi_2 = \mathcal{PC}[\Phi_1]$.
		\item The following non-degenerate condition holds, provided that a suitable phase is chosen:
			\begin{equation}
			\label{FermiVelocity}
				\nu_{_F} := \frac{1}{2} \overline{\langle \Phi_1,\mathcal{A}\Phi_2\rangle_{L^2_{\bK^*}}} \cdot \begin{pmatrix}
					1 \\
					\mi
				\end{pmatrix} > 0,
			\end{equation}
			where
			\begin{equation}
			\label{DefmathcalA}
				\mathcal{A}f(\by) := \frac{1}{\mi}\Big[ A(\by)(\nabla f)(\by) + \nabla \cdot (A(\by)f(\by)) \Big].
			\end{equation}
	
		\item There exists a $k_0>0$ and two Lipschitz continuous functions $e_1,e_2$ defined for $|\boldsymbol{\kappa}|<k_0$, such that
		\begin{equation*}
			|e_i(\boldsymbol{\kappa})|\le C|\boldsymbol{\kappa}|, \quad i=1,2 \text{ for some } C>0.
		\end{equation*}
		\begin{equation*}
		\begin{aligned}
			E_{b_*}(\mathbf{K}^*+\boldsymbol{\kappa}) = E_D - \nu_{_F}|\boldsymbol{\kappa}|(1+e_1(\boldsymbol{\kappa})), \\
			E_{b_*+1}(\mathbf{K}^*+\boldsymbol{\kappa}) = E_D + \nu_{_F}|\boldsymbol{\kappa}|(1+e_2(\boldsymbol{\kappa})).
		\end{aligned}
		\end{equation*}
		 for all $|\boldsymbol{\kappa}|<k_0$.
		 \end{enumerate}
		 
		 If $b_*$ is as described in this definition, we often tag $b_* =-$ and $b_*+1=+$.
\end{definition}
The importance of \cite{fefferman2012honeycomb} and \cite{lee2019elliptic} lies in that they proved that the conditions in the above definition are enjoyed by a generic honeycomb structure, and hence guarantee the existence of Dirac points. We do not repeat the results here. 

We list two more conclusions. The first one is due to the Lipschitz continuity of the Bloch bands (\Cref{LipschitzContinuity}) and the second is proved in \cite{fefferman2014wave}.
\begin{proposition}
\label{EnergyLevelNearED}
	Let $(A,V)$ be a pair of honeycomb structured media and $(\bK,E_D)$ is a Dirac point as in \Cref{DefDiracPoint}. Then, by suitably shrinking $k_0>0$, we can find a $\delta>0$ and a $B \in \nN$, such that the following statements hold.
	\begin{enumerate}
		\item For $E_b(\bk)$ to lie in $(E_D-2\delta,E_D+2\delta)$, we have
	\begin{equation*}
		\text{either } b = \pm, \text{ or } \left( |\bk-\bK| \ge k_0 \text{ and } b \le B \right).
	\end{equation*}
		\item For $0 < |\boldsymbol{\kappa}| < k_0$, one can choose the Bloch modes on $b= \pm$ possessing the following expansions
	\begin{equation}
	\label{PhikNearK}
    \Phi_\pm(\by;\bK+\boldsymbol{\kappa}) = \frac{\me^{\mi \boldsymbol{\kappa}\cdot \by}}{\sqrt{2}}\left[ \frac{\kappa_1+\mi\kappa_2}{|\boldsymbol{\kappa}|} \Phi_1(\by) \pm \Phi_2(\by) + \Phi_{R,\pm}(\by;\boldsymbol{\kappa}) \right],
	\end{equation}
	where $\boldsymbol{\kappa}=(\kappa_1,\kappa_2)$ and $\left \| \Phi_{R,\pm}(\boldsymbol{\kappa})\right\|_{H^2_\bK} \le C |\boldsymbol{\kappa}|$ for some $C$ independent of $\boldsymbol{\kappa}$.
	\end{enumerate}
\end{proposition}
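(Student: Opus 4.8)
The plan is to prove the two assertions by separate, standard spectral arguments: the first is a band-isolation statement that follows from the Lipschitz regularity of \Cref{LipschitzContinuity} together with a uniform-in-$\bk$ Weyl-type bound on the number of low-lying bands, while the second is the degenerate perturbation theory of Fefferman and Weinstein near a Dirac point, which I would invoke from \cite{fefferman2014wave} after indicating the mechanism.

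For the first assertion I would start at $\bk=\bK$. By item~1 of \Cref{DefDiracPoint}, $E_D$ is an eigenvalue of $\mL(\bK)$ of multiplicity exactly $2$, so with the non-decreasing ordering $E_1(\bK)\le E_2(\bK)\le\cdots$ one has $E_{b_*-1}(\bK)<E_D<E_{b_*+2}(\bK)$; fix a $\delta>0$ with $3\delta$ less than $\min\{E_D-E_{b_*-1}(\bK),\,E_{b_*+2}(\bK)-E_D\}$. Since the individual bands $E_{b_*-1}$ and $E_{b_*+2}$ are Lipschitz continuous (\Cref{LipschitzContinuity}), shrinking $k_0$ forces $E_{b_*-1}(\bk)<E_D-2\delta$ and $E_{b_*+2}(\bk)>E_D+2\delta$ whenever $|\bk-\bK|<k_0$, and then the ordering of the bands shows that if $E_b(\bk)\in(E_D-2\delta,E_D+2\delta)$ with $b\notin\{b_*,b_*+1\}$ then necessarily $|\bk-\bK|\ge k_0$; this is the ``$|\bk-\bK|\ge k_0$'' alternative (the ``$b=\pm$'' case being $b\in\{b_*,b_*+1\}$). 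A further shrinking of $k_0$, using item~4 of \Cref{DefDiracPoint}, makes $|E_\pm(\bK+\boldsymbol{\kappa})-E_D|\le\nu_{_F}|\boldsymbol{\kappa}|(1+C|\boldsymbol{\kappa}|)<2\delta$ there, so those two bands really do lie in the window. For the index bound $b\le B$, I would use the min--max principle: uniform ellipticity $A(\by)\ge\theta\,\Id$ and boundedness of $V$ give $\langle u,\mL(\bk)u\rangle\ge\theta\langle u,(-\Delta_\bk)u\rangle-\|V\|_\infty\|u\|^2$ on $H^1_\bk$, hence $E_b(\bk)\ge\theta\mu_b(\bk)-\|V\|_\infty$, where the $\mu_b(\bk)$ are the eigenvalues of the quasi-periodic Laplacian, that is $\{|\bk+\mathbf{m}|^2:\mathbf{m}\in\Lambda^*\}$ reordered increasingly. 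Since $\bk$ ranges over the compact set $\mathbb{B}$, the number of these not exceeding a fixed level $\mu$ is bounded uniformly in $\bk$ by a lattice-point count $C_{\Lambda^*}(1+\mu)$; choosing $\mu=(E_D+2\delta+\|V\|_\infty)/\theta$ yields a $\bk$-independent integer $B$ (enlarged so that $B\ge b_*+1$) with the property that $E_b(\bk)<E_D+2\delta$ forces $b\le B$. Combining the two observations gives the stated dichotomy.

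For the second assertion I would move the $\bk$-dependence onto a fixed Hilbert space: multiplication by $\me^{-\mi\boldsymbol{\kappa}\cdot\by}$ is unitary from $L^2_{\bK+\boldsymbol{\kappa}}$ onto $L^2_\bK$ and conjugates $\mL(\bK+\boldsymbol{\kappa})$ into a polynomial (hence analytic) family $\mL_{\boldsymbol{\kappa}}=\mL(\bK)+\mathcal{A}_{\boldsymbol{\kappa}}+|\boldsymbol{\kappa}|^2A(\by)$ on $L^2_\bK$, whose first-order part $\mathcal{A}_{\boldsymbol{\kappa}}$ is, up to sign, the symmetrized-gradient operator $\mathcal{A}$ of (\ref{DefmathcalA}) contracted with $\boldsymbol{\kappa}$. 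Let $P_0$ be the spectral projection of $\mL(\bK)$ onto $\Ker(\mL(\bK)-E_D)=\mathrm{span}\{\Phi_1,\Phi_2\}$ and $P_0^\perp=\Id-P_0$; by the first assertion applied at $\bk=\bK$, $E_D$ is isolated in $\sigma(\mL(\bK))$, so $\mL(\bK)-E$ is boundedly invertible on $\Ran P_0^\perp$ for $E$ near $E_D$, and a Lyapunov--Schmidt (Schur complement) reduction reduces the eigenvalue problem for $\mL_{\boldsymbol{\kappa}}$ near $E_D$ to the $2\times2$ matrix $\mathcal{E}(\boldsymbol{\kappa},E)=(E_D-E)\sigma_0+M(\boldsymbol{\kappa})+O(|\boldsymbol{\kappa}|^2)$ being singular, where $M(\boldsymbol{\kappa})_{ij}=\langle\Phi_i,\mathcal{A}_{\boldsymbol{\kappa}}\Phi_j\rangle_{L^2_\bK}$. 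The relations $\mathcal{R}\Phi_1=\tau\Phi_1$, $\Phi_2=\mathcal{PC}\Phi_1$ force $M(\boldsymbol{\kappa})$ to be off-diagonal with entries proportional to $\kappa_1\pm\mi\kappa_2$, and the normalization (\ref{FermiVelocity}) identifies the constant, giving $M(\boldsymbol{\kappa})=\nu_{_F}(\kappa_1\sigma_1-\kappa_2\sigma_2)$ with eigenvalues $\pm\nu_{_F}|\boldsymbol{\kappa}|$ and normalized eigenvectors $\tfrac{1}{\sqrt2}\big(\tfrac{\kappa_1+\mi\kappa_2}{|\boldsymbol{\kappa}|},\pm1\big)^T$. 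The eigenvector of $\mL_{\boldsymbol{\kappa}}$ is then this combination of $\Phi_1,\Phi_2$ plus the Schur correction in $\Ran P_0^\perp$, which is $O(|\boldsymbol{\kappa}|)$ in $L^2_\bK$ and, by elliptic regularity for $\mL(\bK)$, also $O(|\boldsymbol{\kappa}|)$ in $H^2_\bK$; undoing the conjugation multiplies by $\me^{\mi\boldsymbol{\kappa}\cdot\by}$ and reproduces (\ref{PhikNearK}) with $\Phi_{R,\pm}$ the asserted remainder.

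The step I expect to be the main obstacle lies in the second assertion: making the perturbative construction uniform in the direction $\boldsymbol{\kappa}/|\boldsymbol{\kappa}|$, extracting the remainder bound in the $H^2_\bK$ norm rather than only in $L^2_\bK$, and carrying out the symmetry computation that pins $M(\boldsymbol{\kappa})$ down to the single scalar $\nu_{_F}$ --- precisely the delicate analysis of \cite{fefferman2014wave}, which I would cite rather than reproduce. On the side of the first assertion, the only point needing genuine care is the uniform-in-$\bk$ bound on the number of bands below $E_D+2\delta$, resting on the min--max comparison with the quasi-periodic Laplacian and a lattice-point estimate valid uniformly over the compact Brillouin zone; everything else follows at once from $E_D$ having multiplicity exactly two and the Lipschitz continuity of the two neighboring bands.
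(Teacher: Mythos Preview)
Your proposal is correct and aligned with the paper's own treatment, which in fact provides no proof at all: the paper simply states that the first assertion ``is due to the Lipschitz continuity of the Bloch bands (\Cref{LipschitzContinuity}) and the second is proved in \cite{fefferman2014wave}.'' Your argument therefore supplies considerably more detail than the paper does. In particular, the paper does not spell out how one obtains the uniform band-count $b\le B$; your min--max comparison with the quasi-periodic Laplacian is the natural way to fill this in, and the Lyapunov--Schmidt sketch for (\ref{PhikNearK}) accurately summarizes the mechanism in \cite{fefferman2014wave}, which is precisely what the paper cites.
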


In the rest of this article, we always assume that $(A,V)$ is a pair of honeycomb structured media described by \Cref{DefHoneycombStructure}, and $(\bK,E_D)$ is a Dirac point of the operator $\mL:= -\nabla \cdot(A(\by) \nabla) + V(\by)$ described as in \Cref{DefDiracPoint}.

\subsection{Bifurcation Matrices}
We compute in advance some expressions for later use. The following ones are proved in \cite{lee2019elliptic}.
\begin{proposition}
\label{MathcalAMatrix}
	Let $\Phi_1$ and $\Phi_2$ be as in \Cref{DefDiracPoint}.
	\begin{enumerate}
		\item $\langle \Phi_i,\Phi_j \rangle_{L^2_\bK}=\delta_{ij},$ for $i,j=1,2,$.
		\item For the operator $\mathcal{A}$ defined by \eqref{DefmathcalA}, we have
			\begin{equation}
			\begin{gathered}
	\langle \Phi_1,\mathcal{A}\Phi_1 \rangle_{L^2_\bK} = \mathbf{0},\quad 
	\langle \Phi_1,\mathcal{A}\Phi_2 \rangle_{L^2_\bK} = \nu_{_F} \begin{pmatrix}
		1 \\
		\mi
	\end{pmatrix}, \\
	\langle \Phi_2,\mathcal{A}\Phi_1 \rangle_{L^2_\bK} = \nu_{_F} \begin{pmatrix}
		1 \\
		-\mi
	\end{pmatrix}, \quad 
	\langle \Phi_2,\mathcal{A}\Phi_2 \rangle_{L^2_\bK} = \mathbf{0},
			\end{gathered}
			\end{equation}
			where $\nu_{_F}$ is defined in \eqref{FermiVelocity}.
	\end{enumerate}
\end{proposition}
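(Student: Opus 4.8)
The plan is to deduce both items from the rotation/parity structure carried by $\Phi_1,\Phi_2$ — namely $\mathcal{R}\Phi_1 = \tau\Phi_1$ and $\Phi_2 = \mathcal{PC}\Phi_1$ — together with a single equivariance identity for the operator $\mathcal{A}$. Item (1) is essentially immediate: $\mathcal{P}$ and $\mathcal{C}$ are isometric, so $\|\Phi_2\|_{L^2_\bK} = \|\Phi_1\|_{L^2_\bK} = 1$; and since the rotation matrix is real, $\mathcal{R}$ commutes with $\mathcal{P}$ and with $\mathcal{C}$, whence $\mathcal{R}\Phi_2 = \mathcal{PC}(\mathcal{R}\Phi_1) = \overline{\tau}\,\Phi_2$. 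Thus $\Phi_1$ and $\Phi_2$ are eigenvectors of the unitary $\mathcal{R}$ on $L^2_\bK$ belonging to the distinct eigenvalues $\tau \ne \overline{\tau}$ (note $\tau^3 = 1$, $\tau \ne 1$), hence $\langle\Phi_1,\Phi_2\rangle_{L^2_\bK} = 0$.

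For item (2) the crucial preliminary is an equivariance formula for $\mathcal{A}$ under rotation. Writing $\mathcal{A}f$ out componentwise and inserting the honeycomb relation $\mathcal{R}[A] = R^*AR$, the chain rule together with $R^TR = \Id$ gives, after a short computation,
\begin{equation*}
	\mathcal{A}(\mathcal{R}g) = R\,\mathcal{R}[\mathcal{A}g], \qquad \text{i.e.} \qquad \mathcal{R}[\mathcal{A}g] = R^{-1}\,\mathcal{A}(\mathcal{R}g),
\end{equation*}
where $\mathcal{R}$ acts componentwise on the $\rN^2$-valued function $\mathcal{A}g$; an analogous (easier) computation shows $\mathcal{A}$ commutes with $\mathcal{PC}$. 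With this in hand, unitarity of $\mathcal{R}$ on $L^2_\bK$ and $\mathcal{R}\Phi_1 = \tau\Phi_1$ yield
\begin{equation*}
	\langle\Phi_1,\mathcal{A}\Phi_1\rangle_{L^2_\bK} = \langle\mathcal{R}\Phi_1,\mathcal{R}[\mathcal{A}\Phi_1]\rangle_{L^2_\bK} = \langle\tau\Phi_1,\tau R^{-1}\mathcal{A}\Phi_1\rangle_{L^2_\bK} = R^{-1}\langle\Phi_1,\mathcal{A}\Phi_1\rangle_{L^2_\bK},
\end{equation*}
so that $\langle\Phi_1,\mathcal{A}\Phi_1\rangle_{L^2_\bK}$ is fixed by the nontrivial rotation $R^{-1}$ and must vanish; the same argument with $\mathcal{R}\Phi_2 = \overline{\tau}\Phi_2$ gives $\langle\Phi_2,\mathcal{A}\Phi_2\rangle_{L^2_\bK} = \mathbf{0}$.

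Running the identical manipulation on the off-diagonal term, now with $\mathcal{R}\Phi_2 = \overline{\tau}\Phi_2$, produces an identity $R^{-1}\langle\Phi_1,\mathcal{A}\Phi_2\rangle_{L^2_\bK} = \lambda\,\langle\Phi_1,\mathcal{A}\Phi_2\rangle_{L^2_\bK}$ with $\lambda$ a nontrivial cube root of unity, which forces $\langle\Phi_1,\mathcal{A}\Phi_2\rangle_{L^2_\bK}$ to be a scalar multiple of the corresponding eigenvector $(1,\mi)^T$ of $R^{-1}$. The scalar is then pinned down by the definition (\ref{FermiVelocity}) of $\nu_{_F}$: substituting $\langle\Phi_1,\mathcal{A}\Phi_2\rangle_{L^2_\bK} = c\,(1,\mi)^T$ there and using $(1,-\mi)\cdot(1,\mi) = 2$ gives $\nu_{_F} = \overline{c}$, and since $\nu_{_F}>0$ is real we conclude $c = \nu_{_F}$, i.e. $\langle\Phi_1,\mathcal{A}\Phi_2\rangle_{L^2_\bK} = \nu_{_F}(1,\mi)^T$. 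Finally $\langle\Phi_2,\mathcal{A}\Phi_1\rangle_{L^2_\bK} = \nu_{_F}(1,-\mi)^T$ follows at once from $\Phi_1 = \mathcal{PC}\Phi_2$, the commutation of $\mathcal{A}$ with $\mathcal{PC}$, and anti-unitarity of $\mathcal{PC}$, which together identify this vector with the componentwise complex conjugate of $\langle\Phi_1,\mathcal{A}\Phi_2\rangle_{L^2_\bK}$.

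All the computations here are elementary; the one point that genuinely needs care — and the thing I would check most carefully — is the $\mathcal{R}$-equivariance identity for $\mathcal{A}$, in particular getting the placement of $R$ versus $R^{-1}$ right and consistently tracking the antilinearity of $\mathcal{C}$ and of the $L^2_\bK$ inner product, so that the cube-root-of-unity bookkeeping singles out the eigenvector $(1,\mi)^T$ rather than $(1,-\mi)^T$. Once that identity is secured, items (1) and (2) are completely forced by the representation theory of the order-three rotation acting on $L^2_\bK$ and on $\rN^2$, the only extra input being the sign normalization (\ref{FermiVelocity}); this is precisely the content of the computation cited from \cite{lee2019elliptic}.
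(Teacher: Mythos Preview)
Your proposal is correct and follows precisely the approach the paper itself uses for the parallel Proposition~\ref{MathfrakAMatrix}: derive an $\mathcal{R}$-equivariance identity for the operator, then read off the inner products as eigenvectors of the induced action of $R^{-1}$, with the scalar fixed by the normalization (\ref{FermiVelocity}) and the remaining entry by $\mathcal{PC}$-symmetry. The paper does not give its own proof of this proposition but cites \cite{lee2019elliptic}; your argument is the expected one and matches that methodology.
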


In this article, we need to introduce one more operation and compute the similar expressions for it. Define $\mathfrak{A}f$ to be a $2 \times 2$-matrix valued function whose entries are given by
\begin{equation}
\label{SymmetricExpressionFrakA}
	(\mathfrak{A}f)_{ij}(\by): = \partial_{y_{_l}}(a_{li}\partial_{y_j} f)(\by)+\partial_{y_j}(a_{il}\partial_{y_{_l}} f)(\by). 
\end{equation}
Equivalently, using tensor notation, we have
\begin{equation}
\label{CoordinateFreeExpressionFrakA}
	\mathfrak{A}f= \nabla\cdot A \otimes \nabla f + A^T\text{Hess} f + D(A\nabla f).
\end{equation}
see Appendix \ref{AppAEO}. In view of the expressions, it is obvious that, given any constant real matrix $M$, the operation sending $f \in L^2_\bK$ to $\operatorname{Tr}(M\mathfrak{A}f) \in L^2_\bK$ is self-adjoint. We group the identities in the next proposition.
\begin{proposition}
\label{MathfrakAMatrix}
	Let $\Phi_1$ and $\Phi_2$ be as in \Cref{DefDiracPoint}. Then
	\begin{equation}
	\begin{gathered}
		\langle \Phi_1,\mathfrak{A}\Phi_1 \rangle_{L^2_\bK} = \langle \Phi_2,\mathfrak{A}\Phi_2 \rangle_{L^2_\bK} = \xi \sigma_0 + \xi^\#\sigma_2, \\
		\langle \Phi_1,\mathfrak{A}\Phi_2 \rangle_{L^2_\bK} = \overline{\langle \Phi_2,\mathfrak{A}\Phi_1 \rangle_{L^2_\bK}} = \mu(\sigma_3 - \mi \sigma_1).	
	\end{gathered}
	\end{equation}
	Here, 
	\begin{equation}
    \label{DefOfMu}
		\mu= \frac{1}{4} \langle \Phi_1,\mathfrak{A}\Phi_2 \rangle : (\sigma_3+\mi\sigma_1), \quad \xi = \frac{1}{2} \mathrm{Tr}(\langle \Phi_1,\mathfrak{A}\Phi_1 \rangle) \le 0,
	\end{equation}
	and if $A(\bx)=a(\bx)\Id$ is isotropic, $\xi^\#=0$.
\end{proposition}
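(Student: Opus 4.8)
The plan is to adapt the template by which Proposition~\ref{MathcalAMatrix} is obtained in \cite{lee2019elliptic}: the four matrices $M^{ab}:=\langle\Phi_a,\mathfrak{A}\Phi_b\rangle_{L^2_\bK}$, $a,b\in\{1,2\}$, will be forced into a very small family by (i) the behaviour of $\mathfrak{A}$ under the rotation $\mathcal{R}$, (ii) its behaviour under $\mathcal{PC}$, and (iii) the self-adjointness of $f\mapsto\mathrm{Tr}(M\mathfrak{A}f)$ for real constant $M$ already noted above; these three inputs pin the $M^{ab}$ down up to the scalars $\xi,\xi^{\#},\mu$. So the first step is to establish the two covariances. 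Working from the coordinate-free expression (\ref{CoordinateFreeExpressionFrakA}), $\mathfrak{A}f=\nabla\cdot A\otimes\nabla f+A^{T}\text{Hess}\,f+D(A\nabla f)$, I would treat each summand separately: the chain rule gives $\nabla(\mathcal{R}f)=R\,\mathcal{R}[\nabla f]$ and $\text{Hess}(\mathcal{R}f)=R\,\mathcal{R}[\text{Hess}\,f]\,R^{T}$, while $\mathcal{R}[A]=R^{*}AR$, rewritten as $A(\by)=R\,A(R^{-1}\by)\,R^{T}$, yields $\nabla\cdot A=R\,\mathcal{R}[\nabla\cdot A]$ and $A\nabla(\mathcal{R}f)=R\,\mathcal{R}[A\nabla f]$. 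Each of the three terms of (\ref{CoordinateFreeExpressionFrakA}) then acquires the same conjugation, so
\begin{equation*}
	\mathfrak{A}(\mathcal{R}f)=R\,\bigl(\mathcal{R}[\mathfrak{A}f]\bigr)\,R^{T},
\end{equation*}
with $\mathcal{R}$ acting entrywise on the matrix-valued function. An analogous but shorter computation, using $\mathcal{P}f(\by)=f(-\by)$, $\mathcal{C}f=\overline f$, and $\mathcal{PC}[A]=A$ — here each summand carries exactly two derivatives in total, so the two sign changes produced by $\mathcal{P}$ cancel — gives $\mathfrak{A}(\mathcal{PC}f)=\mathcal{PC}[\mathfrak{A}f]$.

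Next I would extract the algebraic consequences. Since $\mathcal{R}\Phi_1=\tau\Phi_1$ and, because $\mathcal{R}$ commutes with $\mathcal{PC}$ and $\Phi_2=\mathcal{PC}\Phi_1$, also $\mathcal{R}\Phi_2=\bar\tau\Phi_2$, applying $\mathfrak{A}(\mathcal{R}\Phi_b)=R\,\mathcal{R}[\mathfrak{A}\Phi_b]\,R^{T}$, pairing with $\Phi_a$, and moving $\mathcal{R}$ onto $\Phi_a$ by unitarity on $L^2_\bK$ produces
\begin{equation*}
	RM^{11}R^{-1}=M^{11},\quad RM^{22}R^{-1}=M^{22},\quad RM^{12}R^{-1}=\tau M^{12},\quad RM^{21}R^{-1}=\bar\tau M^{21}.
\end{equation*}
The first two confine $M^{11}$ and $M^{22}$ to the commutant of the honeycomb rotation $R$, which is spanned over $\cN$ by $\sigma_0$ and $\sigma_2$; the last two confine $M^{12}$ and $M^{21}$ to the (simple) $\tau$- and $\bar\tau$-eigenspaces of $X\mapsto RXR^{-1}$, which are $\cN\cdot(\sigma_3-\mi\sigma_1)$ and $\cN\cdot(\sigma_3+\mi\sigma_1)$ respectively — the sign being fixed by a direct check against $R$, matching the statement. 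Testing the self-adjointness relation $\langle\Phi_a,\mathrm{Tr}(M\mathfrak{A}\Phi_b)\rangle=\overline{\langle\Phi_b,\mathrm{Tr}(M\mathfrak{A}\Phi_a)\rangle}$ against every real $M$ gives $M^{ab}=\overline{M^{ba}}$; in particular $M^{11}$ is a real matrix, so in $M^{11}=\xi\sigma_0+\xi^{\#}\sigma_2$ one has $\xi=\tfrac12\mathrm{Tr}\,M^{11}$ real, and $M^{12}=\overline{M^{21}}$ is the remaining claimed identity. Combining $M^{22}=\overline{M^{11}}$ (from the $\mathcal{PC}$-covariance and the anti-unitarity of $\mathcal{PC}$) with the reality of $M^{11}$ gives $M^{22}=M^{11}$. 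The normalization $\mu=\tfrac14\,M^{12}:(\sigma_3+\mi\sigma_1)$ then follows from the one-line identity $\mathrm{Tr}\bigl[(\sigma_3+\mi\sigma_1)(\sigma_3-\mi\sigma_1)\bigr]=4$.

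For the two refinements: taking the trace of (\ref{SymmetricExpressionFrakA}) gives $\mathrm{Tr}(\mathfrak{A}f)=2\nabla\cdot(A\nabla f)$, so $\mathrm{Tr}\,M^{11}=-2\langle\Phi_1,(\mL-V)\Phi_1\rangle_{L^2_\bK}=-2E_D+2\langle\Phi_1,V\Phi_1\rangle$; since $E_D=\langle\Phi_1,\mL\Phi_1\rangle=\langle\nabla\Phi_1,A\nabla\Phi_1\rangle+\langle\Phi_1,V\Phi_1\rangle$ by integration by parts, we conclude $\xi=-\langle\nabla\Phi_1,A\nabla\Phi_1\rangle_{L^2_\bK}\le 0$ from the uniform ellipticity of $A$. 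In the isotropic case $A=a\,\Id$, (\ref{SymmetricExpressionFrakA}) reduces to $(\mathfrak{A}f)_{ij}=\partial_{y_i}(a\,\partial_{y_j}f)+\partial_{y_j}(a\,\partial_{y_i}f)$, which is symmetric in $i,j$; hence each $M^{ab}$ is a symmetric matrix, and since $\sigma_2$ is antisymmetric its coefficient $\xi^{\#}$ must vanish.

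The step I expect to be the main obstacle is the covariance identity $\mathfrak{A}(\mathcal{R}f)=R(\mathcal{R}[\mathfrak{A}f])R^{T}$: it is routine in principle but error-prone, since one must keep straight the placement of $R$ versus $R^{T}$ in each of the three summands of (\ref{CoordinateFreeExpressionFrakA}) and its interaction with $\mathcal{R}[A]=R^{*}AR$, and a single misplaced transpose would send $M^{12}$ into the wrong Pauli combination. Everything afterwards — the eigenspace bookkeeping, the use of self-adjointness and of $\mathcal{PC}$, the sign of $\xi$, and the isotropic reduction — is short.
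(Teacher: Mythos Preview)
Your proposal is correct and follows essentially the same route as the paper: establish the covariances $\mathcal{R}[\mathfrak{A}f]=R^{-1}(\mathfrak{A}\mathcal{R}f)R$ and $\mathcal{PC}[\mathfrak{A}f]=\mathfrak{A}\mathcal{PC}f$, deduce that each $M^{ab}$ lies in the appropriate eigenspace of $M\mapsto R^{-1}MR$ (spanned by $\sigma_0,\sigma_2$ for the commutant and by $\sigma_3\mp\mi\sigma_1$ for the $\tau^{\pm1}$-eigenspaces), and then use self-adjointness and $\mathcal{PC}$ to fix the remaining relations. You actually supply the argument for $\xi\le 0$ via $\mathrm{Tr}(\mathfrak{A}f)=2\nabla\cdot(A\nabla f)$ and integration by parts, which the paper's proof states but does not justify.
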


\begin{proof} From a direct computation, we can verify that
\begin{equation*}
	\mathcal{R}[\mathfrak{A}f]=R^{-1}(\mathfrak{A}\mathcal{R}[f])R, \quad \mathcal{PC}[\mathfrak{A}f] = \mathfrak{A}\mathcal{PC}[f],
\end{equation*}
for all $f \in H^2_\bK$. Return to the inner product (omitting the subscript)
\begin{equation*}
	\begin{aligned}
		\langle \Phi_k,\mathfrak{A}\Phi_l \rangle &= \langle \mathcal{R}[\Phi_k],\mathcal{R}[\mathfrak{A}\Phi_l] \rangle \\
		&= R^{-1} \langle  \mathcal{R}[\Phi_k],\mathfrak{A}\mathcal{R}[\Phi_l] \rangle R \\
		&= \overline{\tau_k}\tau_l R^{-1} \langle  \Phi_k,\mathfrak{A}\Phi_l \rangle R.
	\end{aligned}
\end{equation*}
This identity indicates that $\langle \Phi_k,\mathfrak{A}\Phi_l \rangle$ is a $\tau_k\overline{\tau_l}$-eigenvector of the mapping $M \mapsto R^{-1}MR$.
\begin{lemma}
	The linear transformation $\mathfrak{R}:M \mapsto R^{-1}MR$ has: 
	\begin{enumerate}
		\item eigenvalue $\tau$, with the corresponding eigenspace spanned by $\sigma_3+\mi \sigma_1$.
		\item eigenvalue $\bar{\tau}$, with the corresponding eigenspace spanned by $\sigma_3 -\mi \sigma_1$.
		\item eigenvalue $1$, with the corresponding eigenspace spanned by $\sigma_0$ and $\sigma_2$.
	\end{enumerate}
\end{lemma}
So we can conclude on this lemma that
\begin{enumerate}
	\item when $k=1,n=2$, $\tau_1\overline{\tau_2} = \me^{\frac{4}{3}\pi \mi}$, hence
$\langle \Phi_1,\mathfrak{A}\Phi_2 \rangle  = \mu(\sigma_3 - \mi \sigma_1)$.
	\item when $k=2,n=1$, from self-adjointness
	$\langle \Phi_2,\mathfrak{A}\Phi_1 \rangle = \bar{\mu} (\sigma_3 +\mi \sigma_1)$.
	\item when $k=n=1$, we have $\langle \Phi_1,\mathfrak{A}\Phi_1 \rangle = \xi \sigma_0 + \xi^\# \sigma_2$ for some purely imaginary $\xi^\#$.
	\item when $k=n=2$, by $\mathcal{PC}$-symmetry $\langle \Phi_2,\mathfrak{A}\Phi_2 \rangle = \langle \mathcal{PC}[\Phi_1],\mathfrak{A}\mathcal{PC}[\Phi_1] \rangle = \langle \Phi_1,\mathfrak{A}\Phi_1 \rangle$ .
\end{enumerate}
For isotropic $A(\bx)$, $\mathfrak{A}f$ is always symmetric, forcing $\xi^\#$ to be $0$.
\end{proof}

\subsection{The strained operator}
To end this section, we prepare some basic facts and asymptotic expansion properties of the perturbed operator $\mL^\bracketeps$.
    
 Define $\bT_\varepsilon :\rN^2 \to \rN^2$ by
\begin{equation*}
	\bT_\varepsilon (\by)=\by+\bu(\varepsilon \by),
\end{equation*}
for some deformation $\bu:\rN^2 \to \rN^2$ such that $U:=D_\bY\bu \in C_b^\infty$. In the rest of this article we omit the subscript of $\bT_\varepsilon$. The operator of main concern is
\begin{equation}
\begin{gathered}
	\widetilde{\mL^\bracketeps}:= -J_\varepsilon(\by)\nabla \cdot \frac{M_\varepsilon(\by)A(\by)M_\varepsilon^T(\by)}{J_\varepsilon(\by)} \nabla + V(\by), \\
	M_\varepsilon(\by) := (D_\bx\bT^{-1})(\bT(\by)) = (D_\by\bT(\by))^{-1},\qquad J_\varepsilon(\by):=\det M_\varepsilon(\by).
\end{gathered}
\end{equation}
The effect of change of coordinates also changes the Lebesgue measure in $\rN^2$, making $\widetilde{\mL^\bracketeps}$ no longer self-adjoint. Introduce the weighted $L^2$ space $L^2_\varepsilon (\rN^2)$ whose inner product is defined by
\begin{equation*}
	\langle u,v \rangle_{L^2_\varepsilon}: = \int_{\rN^2} \overline{u(\by)}v(\by)J_\varepsilon(\by)^{-1} \md \by.
\end{equation*}
Then, it is easy to verify that this $\widetilde{\mL^\bracketeps}$ is self-adjoint on this weighted space. But due to the nice behavior of the weight $J_\varepsilon(\by)$, for small $\varepsilon$, $L^2(\rN^2) = L^2_\varepsilon(\rN^2)$ as sets with equivalent norms.

To recover self-adjointness in regular $L^2(\rN^2)$, we further introduce the unitary transform
\begin{equation*}
	\begin{array}{rccc}
		\mathcal{J}_\varepsilon : & L^2_\varepsilon(\rN^2_\by) & \longrightarrow & L^2(\rN^2_\by), \\
		& u(\by) & \longmapsto & J_\varepsilon(\by)^{-1/2}u(\by),
	\end{array}
\end{equation*}
then
\begin{equation*}
	\mL^\bracketeps : = \mathcal{J}_\varepsilon \widetilde{\mL^\bracketeps} \mathcal{J}^{-1}_\varepsilon
\end{equation*}
is now a self-adjoint operator on $L^2(\rN^2)$. Together with the change of coordinate $\bT_\varepsilon$, they give the unitary transform $\mathcal{U}_\varepsilon : L^2(\rN^2) \to L^2(\rN^2)$ introduced in \Cref{Sec:Intro}.

We first collect the basic facts in the following proposition, whose proof is postponed to Appendix \ref{AppAEO}.
\begin{proposition}
\label{AsymptoticExpansionOfTheOperator}
	For sufficiently small $\varepsilon>0$, $\mL^{(\varepsilon)}$ is a self-adjoint operator on $L^2(\rN^2)$ with domain $H^2(\rN^2)$. Moreover, the following asymptotic expansion holds for all $f \in H^2(\rN^2)$:
\begin{equation}
\label{AsymptotcExpansionOfLeps}
	\mL^{(\varepsilon)}f (\by) = \mL^{(0)}f + \varepsilon \operatorname{Tr}\left(U(\varepsilon \by)\mathfrak{A}f(\by)\right) + \varepsilon^2 \mL_r f(\by),
\end{equation}
where, $\mathfrak{A}$ is defined by \eqref{SymmetricExpressionFrakA}, and $\mL_r$ is a second order differential operator whose coefficients are all in $C_b^\infty$, with derivative bounds independent of $\varepsilon$. In the rest of the article, we put $\mL_R f: = \operatorname{Tr}\left(U(\varepsilon \by)\mathfrak{A}f(\by)\right) + \varepsilon \mL_r f(\by)$ so that 
\begin{equation}
	\mL^\bracketeps = \mL^{(0)} + \varepsilon \mL_R.
\end{equation}
\end{proposition}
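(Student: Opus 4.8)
The plan is to establish the two claims separately: first self-adjointness on $H^2(\rN^2)$ with domain independent of $\varepsilon$, then the asymptotic expansion (\ref{AsymptotcExpansionOfLeps}) by a direct Taylor expansion of the coefficients in the strain parameter. For the self-adjointness, I would write $\mL^\bracketeps = -\nabla \cdot (B^\bracketeps(\by)\nabla) + \text{(first order)} + V(\by)$, where $B^\bracketeps := J A J^T$ is obtained by differentiating the product defining $\mL^\bracketeps$; the $\det J$ prefactor and its reciprocal cancel at leading order but produce genuine first-order terms upon distributing $\nabla$ across the quotient. Since $U = D_\bY\bu \in C_b^\infty$, for $\varepsilon$ small the map $\bT$ is a $C_b^\infty$ diffeomorphism with $D_\by\bT = \Id + \varepsilon U(\varepsilon\by)$ invertible and $\det(D_\by\bT)$ bounded away from $0$; hence $J = (\Id + \varepsilon U(\varepsilon\by))^{-1}$ and all coefficients of $\mL^\bracketeps$ lie in $C_b^\infty$, with $B^\bracketeps$ uniformly elliptic (a small perturbation of the uniformly elliptic $A$). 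Standard elliptic theory then gives that $\mL^\bracketeps$ with domain $H^2(\rN^2)$ is self-adjoint: ellipticity plus bounded smooth coefficients yield the a priori estimate $\|f\|_{H^2} \lesssim \|\mL^\bracketeps f\|_{L^2} + \|f\|_{L^2}$, so $H^2$ is exactly the domain of the maximal operator, and symmetry (which holds because $B^\bracketeps$ is Hermitian-valued, $V$ real, and the first-order part has the symmetric divergence form built into $\mathcal{A}$-type expressions) upgrades to self-adjointness via the basic criterion that a symmetric operator with $\Ran(\mL^\bracketeps \pm \mi) = L^2$ is self-adjoint, the surjectivity again following from the elliptic estimate.

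For the expansion, I would Taylor-expand $J = (\Id + \varepsilon U(\varepsilon\by))^{-1} = \Id - \varepsilon U(\varepsilon\by) + \varepsilon^2 U(\varepsilon\by)^2 - \cdots$ and $\det J = 1 - \varepsilon\operatorname{Tr}U(\varepsilon\by) + O(\varepsilon^2)$, substitute into $B^\bracketeps/\det J = JAJ^T/\det J$ and into the first-order terms produced by the outer $\det J\,\nabla\cdot(\,\cdot/\det J)$ structure, and collect powers of $\varepsilon$. The $\varepsilon^0$ term is manifestly $\mL^{(0)} = \mL$. The $\varepsilon^1$ term comes from the linear-in-$U$ pieces: the contribution $-\nabla\cdot\bigl((UA + AU^T)\nabla f\bigr)$ from expanding $JAJ^T$, plus the first-order contributions from differentiating $\det J$ and $1/\det J$, which combine (using $\operatorname{Tr}(U\,\cdot)$ being linear) into exactly $\operatorname{Tr}(U(\varepsilon\by)\mathfrak{A}f)$ with $\mathfrak{A}$ as in (\ref{SymmetricExpressionFrakA})–(\ref{CoordinateFreeExpressionFrakA}); this bookkeeping is the computation deferred to Appendix \ref{AppAEO}, whose coordinate-free form (\ref{CoordinateFreeExpressionFrakA}) I would verify matches term by term. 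The remaining terms, all of order $\varepsilon^2$ or higher, are packaged into $\varepsilon^2 \mL_r$; since every coefficient is a polynomial in the entries of $U(\varepsilon\by)$, $\det(\Id+\varepsilon U)^{-1}$, their first derivatives, and the entries of $A$ and $\nabla A$ — all in $C_b^\infty$ uniformly in $\varepsilon$ by the chain rule (each $\partial_{y_j}$ hitting $U(\varepsilon\by)$ produces a harmless factor $\varepsilon$) — the operator $\mL_r$ is second order with $C_b^\infty$ coefficients whose derivative bounds are $\varepsilon$-independent, as claimed.

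The main obstacle is not any single hard estimate but the careful accounting of where the first-order terms come from: the deformed operator in (\ref{DeformedGoverningEquation}) is written with the $\det J$ prefactor sitting outside and $1/\det J$ inside the divergence, and one must correctly track that distributing the two gradients and the two determinant factors produces both the "naive" term $-\nabla\cdot(UAJ^T + JA U^T)\nabla f$-type contributions and the determinant-derivative contributions, and that these assemble precisely into the symmetric operator $\mathfrak{A}$ rather than some asymmetric variant — the symmetry being exactly what makes $f \mapsto \operatorname{Tr}(M\mathfrak{A}f)$ self-adjoint, as already noted after (\ref{CoordinateFreeExpressionFrakA}) and needed later. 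A secondary subtlety is ensuring the remainder coefficients' bounds are genuinely uniform in $\varepsilon$: this hinges on the elementary but essential observation that differentiating $\varepsilon\by \mapsto U(\varepsilon\by)$ in $\by$ costs a factor of $\varepsilon \le 1$, so no derivative bound blows up as $\varepsilon \to 0$.
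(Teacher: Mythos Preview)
Your approach is essentially the paper's: Neumann-expand $J$, expand $\det J$, collect powers of $\varepsilon$. However, your bookkeeping of the $O(\varepsilon)$ term is off in a way that, while it would self-correct upon computation, misrepresents where $\operatorname{Tr}(U\mathfrak{A}f)$ actually comes from. You claim the first-order contributions from $\det J$ and $1/\det J$ combine with the $JAJ^T$ piece to produce $\operatorname{Tr}(U\mathfrak{A}f)$. In fact the paper's key observation (and the one you have in hand but do not apply here) is that $\nabla_{\by}|J|^{-1}=O(\varepsilon^2)$: the determinant factor is $1-\varepsilon\operatorname{Tr}U(\varepsilon\by)+O(\varepsilon^2)$, and differentiating in $\by$ costs an extra $\varepsilon$ from the slow variable. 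Hence the identity
\[
-\det J\,\nabla\cdot\frac{JAJ^T}{\det J}\nabla f \;=\; -\nabla\cdot(JAJ^T)\nabla f \;-\; |J|\bigl(\nabla|J|^{-1}\bigr)\cdot(JAJ^T\nabla f)
\]
shows the entire determinant contribution sits in the $\varepsilon^2$ remainder, not at order $\varepsilon$. The $O(\varepsilon)$ term is therefore exactly $+\varepsilon\,\nabla\cdot(UA+AU^T)\nabla f$ (note the sign: $JAJ^T=A-\varepsilon(UA+AU^T)+O(\varepsilon^2)$), and expanding this divergence with $U$ evaluated at $\varepsilon\by$ --- so that every $\partial_{y_i}$ landing on $U$ produces a further $\varepsilon$ --- yields precisely $\operatorname{Tr}(U\mathfrak{A}f)$ plus $O(\varepsilon)$ corrections, with $\mathfrak{A}$ as in (\ref{SymmetricExpressionFrakA}). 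In short: the determinant factors do not help build $\mathfrak{A}$; they cancel to leading order and are absorbed in $\mL_r$.

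A second minor point: your self-adjointness sketch asserts symmetry because ``the first-order part has the symmetric divergence form built into $\mathcal{A}$-type expressions.'' The first-order part of $\mL^\bracketeps$ is $(\nabla\log\det J)\cdot JAJ^T\nabla$, which is not by itself formally symmetric on $L^2(\md\by)$; the paper is equally brief here and simply invokes that a uniformly elliptic operator with $C_b^\infty$ coefficients shares domain $H^2$ with $-\Delta$. Since this first-order piece is $O(\varepsilon^2)$ anyway, a Kato--Rellich argument relative to the manifestly self-adjoint $-\nabla\cdot JAJ^T\nabla+V$ would close the gap cleanly if you want to be careful.
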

From this we see that $\mL^\bracketeps$ is a second-order perturbation of $\mL^{(0)}$, which causes the main technical difficulty in the proof. See the remarks in \Cref{SecMR}.

To estimate the Sobolev norms we need to recall the elliptic regularity, which says that the values $\| f \|_{H^2}$ and $\left\| \mL^{(0)}f \right\|_{L^2}+\|f\|_{L^2}$ are comparable. This observation should also extend to $\mL^{(\varepsilon)}$. Here we prove a stronger version, showing that this equivalence is uniform in $\varepsilon$. 

\begin{proposition}
\label{UniformEllipticRegularity} 
	For sufficiently small $\varepsilon>0$, the following inequality holds for some $C_N>0$ independent of $\varepsilon$:
	\begin{equation}
		C_N^{-1}\| f \|_{H^{2N}} \le \left\| \left( \mL^{(\varepsilon)}\right)^N f \right\|_{L^2} + \| f\|_{L^2} \le C_N\| f \|_{H^{2N}},\quad \forall f \in H^{2N}(\rN^2).
	\end{equation}
\end{proposition}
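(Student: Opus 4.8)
The plan is to prove the two-sided bound by induction on $N$, using the $N=1$ case (uniform elliptic regularity for the second-order operator $\mathcal{L}^{(\varepsilon)}$) as the base, and then bootstrapping via the asymptotic expansion $\mathcal{L}^{(\varepsilon)} = \mathcal{L}^{(0)} + \varepsilon \mathcal{L}_R$ from \Cref{AsymptoticExpansionOfTheOperator}. For the base case, the upper bound $\|\mathcal{L}^{(\varepsilon)}f\|_{L^2} + \|f\|_{L^2} \lesssim \|f\|_{H^2}$ is immediate since $\mathcal{L}^{(\varepsilon)}$ is a second-order operator whose coefficients are in $C_b^\infty$ with $\varepsilon$-independent bounds (again by \Cref{AsymptoticExpansionOfTheOperator}, writing $\mathcal{L}^{(\varepsilon)} = \mathcal{L}^{(0)} + \varepsilon\mathcal{L}_R$ with $\mathcal{L}_R$ having uniformly bounded coefficients). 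For the lower bound $\|f\|_{H^2} \lesssim \|\mathcal{L}^{(\varepsilon)}f\|_{L^2} + \|f\|_{L^2}$, I would first invoke the classical elliptic estimate $\|f\|_{H^2} \lesssim \|\mathcal{L}^{(0)}f\|_{L^2} + \|f\|_{L^2}$, then write $\mathcal{L}^{(0)}f = \mathcal{L}^{(\varepsilon)}f - \varepsilon\mathcal{L}_R f$ and bound $\|\varepsilon\mathcal{L}_R f\|_{L^2} \le \varepsilon C\|f\|_{H^2}$; absorbing $\varepsilon C\|f\|_{H^2}$ into the left-hand side for $\varepsilon$ small enough ($\varepsilon C < 1/2$, say) yields the base case with an $\varepsilon$-independent constant.

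For the inductive step, assume the claim holds for $N-1$. The upper bound follows easily: $\big(\mathcal{L}^{(\varepsilon)}\big)^N f = \mathcal{L}^{(\varepsilon)}\big[(\mathcal{L}^{(\varepsilon)})^{N-1}f\big]$, and since $\mathcal{L}^{(\varepsilon)}$ maps $H^{2N}$ to $H^{2N-2}$ boundedly with uniform constants, iterating gives $\|(\mathcal{L}^{(\varepsilon)})^N f\|_{L^2} \lesssim \|f\|_{H^{2N}}$. For the lower bound, I would apply the classical higher-order elliptic estimate $\|f\|_{H^{2N}} \lesssim \sum_{j=0}^{N}\|(\mathcal{L}^{(0)})^j f\|_{L^2}$ (which follows by iterating the second-order estimate, since $\mathcal{L}^{(0)}$ is elliptic with smooth coefficients), and then replace each $(\mathcal{L}^{(0)})^j$ by $(\mathcal{L}^{(\varepsilon)})^j$ plus lower-order errors. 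Concretely, expanding $(\mathcal{L}^{(0)})^j = (\mathcal{L}^{(\varepsilon)} - \varepsilon\mathcal{L}_R)^j$ produces the leading term $(\mathcal{L}^{(\varepsilon)})^j f$ plus a sum of terms each carrying at least one factor of $\varepsilon\mathcal{L}_R$; every such error term is, after commuting factors, a differential operator of order at most $2j$ with an overall factor of $\varepsilon$ and uniformly bounded coefficients, hence bounded in $L^2$ by $\varepsilon C\|f\|_{H^{2N}}$. Thus $\|f\|_{H^{2N}} \lesssim \|(\mathcal{L}^{(\varepsilon)})^N f\|_{L^2} + \|f\|_{L^2} + \varepsilon C\|f\|_{H^{2N}}$, and absorbing the last term for $\varepsilon$ small enough (uniformly, since all constants are $\varepsilon$-independent) completes the induction. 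To handle the intermediate terms $\|(\mathcal{L}^{(\varepsilon)})^j f\|_{L^2}$ for $j < N$, I would interpolate or simply use the induction hypothesis together with the already-established upper bounds.

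The main obstacle is bookkeeping the error terms in the expansion of $(\mathcal{L}^{(0)})^j$ in powers of $\varepsilon\mathcal{L}_R$: one must verify that commutators like $[\mathcal{L}^{(\varepsilon)}, \mathcal{L}_R]$ and products of such operators remain differential operators of the expected order with coefficients in $C_b^\infty$ whose bounds do not blow up as $\varepsilon \to 0$. This is where \Cref{AsymptoticExpansionOfTheOperator} is essential, since it guarantees precisely that $\mathcal{L}_R$ (hence also $\mathcal{L}^{(\varepsilon)}$) has coefficients in $C_b^\infty$ with derivative bounds independent of $\varepsilon$; composition and commutation of such operators preserve this property, with the number of derivatives falling on coefficients controlled by the (finite) order of the operators and the value of $N$. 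Once this uniform-coefficient stability is in hand, the absorption argument closes the estimate, and the constant $C_N$ depends only on $N$, the honeycomb data $(A,V)$, and $\|U\|_{C_b^k}$ for finitely many $k$, but not on $\varepsilon$.
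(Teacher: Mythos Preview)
Your proposal is correct and follows essentially the same perturbative strategy as the paper: establish the $N=1$ case by combining classical elliptic regularity for $\mathcal{L}^{(0)}$ with the expansion $\mathcal{L}^{(\varepsilon)}=\mathcal{L}^{(0)}+\varepsilon\mathcal{L}_R$ and an absorption argument, then iterate. The paper's write-up is in fact terser than yours --- it only displays the $N=1$ absorption inequality and then dismisses the general case with the sentence ``Iterating this procedure, we generalize this result to any Sobolev norm of even order,'' without spelling out the commutator bookkeeping or the handling of intermediate powers $(\mathcal{L}^{(\varepsilon)})^j$ that you correctly flag.
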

\begin{proof}
	We prove the case $N=1$. By elliptic regularity for $\mL^{(0)}$ the above inequality holds for some $c>0$ and $\varepsilon =0$. Using \Cref{AsymptoticExpansionOfTheOperator} we find that
	\begin{equation*}
	\begin{aligned}
		\left\| \mL^{(0)}f \right\|_{L^2} + \|f\|_{L^2} - \tilde{C}\varepsilon \|f \|_{H^2} &\le \left\| \mL^{(\varepsilon)}f  \right\|_{L^2} +\|f\|_{L^2} \\
		 &= \left\| \mL^{(0)}f + \varepsilon \operatorname{Tr}\left(U(\varepsilon \cdot)\mathfrak{A}f\right) + \varepsilon^2 \mathcal{L}_rf \right\|_{L^2} +\|f\|_{L^2}\\
		 &\le \left\| \mL^{(0)}f \right\|_{L^2} +\|f\|_{L^2} + \tilde{C}\varepsilon \|f \|_{H^2}.
	\end{aligned}
	\end{equation*}
	Hence, the proposition follows easily, provided that  $\varepsilon >0$ is small enough. Iterating this procedure, we generalize this result to any Sobolev norm of even order.
\end{proof}

\section{Main Results}
\label{SecMR}
Our main object is the following wave equation 
\begin{equation}
\label{ConcernedWaveEquation}
\left \{\begin{aligned}
	\frac{\partial^2 \varphi}{\partial t^2} &+\mL^\bracketeps\varphi = 0, \\
	\varphi(\by,0) &= \varepsilon J_\varepsilon(\by)^{-1/2} \left[ \alpha_{10}(\varepsilon\by)\Phi_1(\by)+\alpha_{20}(\varepsilon\by)\Phi_2(\by) \right], \\
	\frac{\partial\varphi}{\partial t}(\by,0) &= \varepsilon \mi \sqrt{E_D} J_\varepsilon(\by)^{-1/2} \left[ \alpha_{10}(\varepsilon\by)\Phi_1(\by)+\alpha_{20}(\varepsilon\by)\Phi_2(\by) \right].
\end{aligned}\right.
\end{equation}
The candidate is of the form
\begin{equation}
\label{TheAnsatzMR}
	\varphi_{\mathrm{eff}}(\by,t) = \varepsilon \me^{\mi \sqrt{E_D}t} J_\varepsilon(\by)^{-1/2} \left[ \alpha_1(\varepsilon \by,\varepsilon t)\Phi_1(\by) + \alpha_2(\varepsilon \by,\varepsilon t)\Phi_2(\by)\right],
\end{equation}
where $\boldsymbol{\alpha}(\bY,T)=(\alpha_1(\bY,T),\alpha_2(\bY,T))$ is the solution to the following Dirac equations
\begin{equation}
\label{DiracEquation}
	\left\{\begin{aligned}
		2\mi \sqrt{E_D}\partial_T\alpha_1 &= \nu_{_F}(\mi \partial_{Y_1}\alpha_2 - \partial_{Y_2}\alpha_2) \\&\quad - \mu\left(\operatorname{Tr}(U\sigma_3)-\mi\operatorname{Tr}(U\sigma_1) \right)\alpha_2 - \left( \xi \operatorname{Tr}(U\sigma_0) + \xi^\# \operatorname{Tr}(U\sigma_2) \right)\alpha_1, \\
		2\mi \sqrt{E_D}\partial_T\alpha_2 &= \nu_{_F}(\mi \partial_{Y_1}\alpha_1 + \partial_{Y_2}\alpha_1) \\&\quad - \bar{\mu}\left(\operatorname{Tr}(U\sigma_3)+\mi\operatorname{Tr}(U\sigma_1) \right)\alpha_1 - \left( \xi \operatorname{Tr}(U\sigma_0) + \xi^\# \operatorname{Tr}(U\sigma_2) \right)\alpha_2, \\
		\alpha_1(\bY,0)&=\alpha_{10}(\bY), \quad\alpha_2(\bY,0)=\alpha_{20}(\bY).
	\end{aligned}\right.
\end{equation}

We first recall the well-posedness and regularity on these equations, which should be a simple corollary of the classic results in first-order hyperbolic equations theory \cite{kato1975cauchy}. Introduce the weighted Sobolev spaces, to control the $W^{N,1}$ norms that will appear later, for $s \in \nN,m \ge 0$:
\begin{equation*}
\begin{gathered}
	H^s_m (\rN^2):= \{ f \in \mathcal{S}'(\rN^2): \langle \bY \rangle^m \partial_\bY^{\boldsymbol{\gamma}}f(\bY) \in L^2(\rN^2),\forall |\boldsymbol{\gamma}| \le s\}, \\
	\|f\|_{H^s_m}:= \sum_{|\boldsymbol{\gamma}|\le s} \left\|\langle \bY \rangle^m\partial_\bY^{\boldsymbol{\gamma}}f(\bY)\right\|_{L^2}.	
\end{gathered}
\end{equation*}

\begin{proposition}
\label{RegularityOfTheDiracEquation}
	Let $E_D >0,\nu_{_F}>0$, $\xi \le 0$,$\xi^\#$ be purely imaginary and $U \in C_b^\infty$. Then \eqref{DiracEquation} is a first-order hyperbolic system with coefficients independent of time. Hence, if $s \in \nN, s > 3, m\ge 0$, for each initial value $\boldsymbol{\alpha}(0) = (\alpha_{10},\alpha_{20})^T \in H^s_m$, there exists a unique solution
	\begin{equation*}
		\boldsymbol{\alpha} \in C^0\left( [0,\rho],H^s_m \right) \cap C^1\left( [0,\rho],H^{s-1}_m \right) \cap C^2\left( [0,\rho],H^{s-2}_m \right),
	\end{equation*}
	for all $\rho>0$. Moreover, there exists $C_1,C_2>0$ (depending on $\rho$) such that
	\begin{equation*}
		\| \boldsymbol{\alpha}(T)\|_{H^s_m} + \| \partial_T\boldsymbol{\alpha}(T)\|_{H^{s-1}_m}+ \| \partial^2_T\boldsymbol{\alpha}(T)\|_{H^{s-2}_m} \le C_1 \me^{C_2T}\|\boldsymbol{\alpha}(0)\|_{H^s_m},
	\end{equation*}
	for all $T \in [0,\rho]$.
\end{proposition}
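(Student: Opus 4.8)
The plan is to read the Dirac system (\ref{DiracEquation}) as a symmetric hyperbolic system with constant principal part and a bounded zeroth‑order term, to strip off the polynomial weight $\langle\bY\rangle^m$ by a conjugation, and then to invoke the classical $L^2$‑based energy theory, e.g.\ \cite{kato1975cauchy}. \emph{Reduction to standard form.} Dividing (\ref{DiracEquation}) by $2\mi\sqrt{E_D}$, which is licit since $E_D>0$, and collecting the first‑order terms, the system takes the form
\begin{equation*}
	\partial_T\boldsymbol{\alpha} = v\,\sigma_1\partial_{Y_1}\boldsymbol{\alpha} - v\,\sigma_2\partial_{Y_2}\boldsymbol{\alpha} + C(\bY)\boldsymbol{\alpha}, \qquad v:=\frac{\nu_{_F}}{2\sqrt{E_D}}>0,
\end{equation*}
where $C(\bY)$ is the $2\times2$ matrix whose entries are fixed linear combinations of the components of $U=D\bu$, with coefficients built from $\nu_{_F},\mu,\bar\mu,\xi,\xi^\#,E_D$; since $U\in C_b^\infty$ so is $C$, and $C$ does not depend on $T$. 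The principal matrices $v\sigma_1$ and $-v\sigma_2$ are constant and Hermitian, so for every $\boldsymbol{\kappa}\in\rN^2$ the symbol $v\kappa_1\sigma_1-v\kappa_2\sigma_2$ is Hermitian (the identity serves as a Friedrichs symmetrizer). This is precisely a first‑order symmetric hyperbolic system with time‑independent $C_b^\infty$ coefficients, which proves the first assertion and places us within the scope of \cite{kato1975cauchy}.

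\emph{Removing the weight.} Set $\boldsymbol{\beta}:=\langle\bY\rangle^m\boldsymbol{\alpha}$. From $\langle\bY\rangle^m\partial_{Y_j}\big(\langle\bY\rangle^{-m}\,\cdot\,\big)=\partial_{Y_j}-m\langle\bY\rangle^{-2}Y_j$ and the fact that $\bY\mapsto\langle\bY\rangle^{-2}Y_j$ lies in $C_b^\infty$, one checks that $\boldsymbol{\beta}$ solves a system of the very same structure, with the same constant Hermitian principal part $v\sigma_1,-v\sigma_2$ and a new zeroth‑order coefficient $C_m:=C-mv(\langle\bY\rangle^{-2}Y_1)\sigma_1+mv(\langle\bY\rangle^{-2}Y_2)\sigma_2\in C_b^\infty$. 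Moreover, because $|\partial_\bY^{\boldsymbol{\gamma}}\langle\bY\rangle^{\pm m}|\lesssim_{\boldsymbol{\gamma},m}\langle\bY\rangle^{\pm m}$ pointwise, the Leibniz rule yields the norm equivalence $\|f\|_{H^s_m}\approx_{s,m}\|\langle\bY\rangle^m f\|_{H^s}$; hence $\boldsymbol{\alpha}$ has a given regularity into the spaces $H^\ell_m$ if and only if $\boldsymbol{\beta}$ has the corresponding regularity into $H^\ell$, with comparable norms. It therefore suffices to prove the statement for $\boldsymbol{\beta}$ in the unweighted Sobolev spaces.

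\emph{Classical theory and time regularity.} For initial datum $\boldsymbol{\beta}(0)=\langle\bY\rangle^m\boldsymbol{\alpha}_0\in H^s$, $s\in\nN$ (in particular for the $s>3$ in the statement), the cited theory — or, directly, applying $\partial_\bY^{\boldsymbol{\gamma}}$ with $|\boldsymbol{\gamma}|\le s$ to the equation (the constant principal part commutes with $\partial_\bY^{\boldsymbol{\gamma}}$ exactly, while $[\partial_\bY^{\boldsymbol{\gamma}},C_m]$ has order $\le s-1$ with bounded coefficients), pairing in $L^2$, integrating by parts (the boundary terms vanish since $v\sigma_1,-v\sigma_2$ are Hermitian and constant), and closing with Grönwall — produces a unique $\boldsymbol{\beta}\in C^0([0,\rho];H^s)\cap C^1([0,\rho];H^{s-1})$ with $\|\boldsymbol{\beta}(T)\|_{H^s}\le C_1\me^{C_2 T}\|\boldsymbol{\beta}(0)\|_{H^s}$, for every $\rho>0$. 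Transporting this back through the weight equivalence gives $\boldsymbol{\alpha}\in C^0([0,\rho];H^s_m)\cap C^1([0,\rho];H^{s-1}_m)$ and the desired bound on $\|\boldsymbol{\alpha}(T)\|_{H^s_m}$. Finally, writing $\mathcal{D}:=v\sigma_1\partial_{Y_1}-v\sigma_2\partial_{Y_2}+C$, we have $\partial_T\boldsymbol{\alpha}=\mathcal{D}\boldsymbol{\alpha}$ and hence $\partial_T^2\boldsymbol{\alpha}=\mathcal{D}^2\boldsymbol{\alpha}$; since $\mathcal{D}:H^\ell_m\to H^{\ell-1}_m$ is bounded, this yields $\partial_T\boldsymbol{\alpha}\in C^0([0,\rho];H^{s-1}_m)$, $\partial_T^2\boldsymbol{\alpha}\in C^0([0,\rho];H^{s-2}_m)$ (so that $\boldsymbol{\alpha}\in C^2([0,\rho];H^{s-2}_m)$), together with $\|\partial_T\boldsymbol{\alpha}(T)\|_{H^{s-1}_m}+\|\partial_T^2\boldsymbol{\alpha}(T)\|_{H^{s-2}_m}\lesssim\|\boldsymbol{\alpha}(T)\|_{H^s_m}$; combined with the energy estimate, this is exactly the asserted inequality, after relabelling $C_1,C_2$.

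The computation is entirely routine once the weight has been disposed of; the only genuinely delicate point is the interplay between the polynomial weight $\langle\bY\rangle^m$ and the differential operator, and this is precisely what the conjugation $\boldsymbol{\alpha}\mapsto\langle\bY\rangle^m\boldsymbol{\alpha}$ neutralizes, thanks to the constancy of the principal part and the boundedness of the logarithmic derivatives of the weight. I therefore do not expect any real obstacle beyond bookkeeping.
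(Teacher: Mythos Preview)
Your proposal is correct and follows the same route the paper indicates: the paper does not actually prove this proposition but merely states that it ``should be a simple corollary of the classic results in first-order hyperbolic equations theory \cite{kato1975cauchy}''. Your write-up supplies exactly the details the paper omits---identifying the constant Hermitian principal part $v\sigma_1\partial_{Y_1}-v\sigma_2\partial_{Y_2}$, conjugating away the polynomial weight, and reading off the $C^2$ time regularity from $\partial_T^2\boldsymbol{\alpha}=\mathcal{D}^2\boldsymbol{\alpha}$---so there is nothing to compare beyond noting that you have made the implicit explicit.
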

For a further physical discussion of the Dirac equation \eqref{DiracEquation}, see \Cref{SecDDE}.

Our main results are as follows.
\begin{theorem}
\label{MainTheorem1}
 Suppose $(A,V)$ is a pair of honeycomb media, $(\bK,E_D)$ is a Dirac point, $V \ge 0$ but $V \not\equiv 0$, and $\bu:\rN^2 \to \rN^2$ is a deformation such that $U=D_\bY\bu \in C_b^\infty$. For $N \ge 1,\nu >0$, assume that $\boldsymbol{\alpha}(0)=(\alpha_{10},\alpha_{20}) \in H^{2N+2}\cap H^4_{1+\nu}$. Define $\varphi_{\mathrm{eff}}$ through \eqref{TheAnsatzMR} with $\boldsymbol{\alpha} = (\alpha_1,\alpha_2)$ satisfying the Dirac equation \eqref{DiracEquation}. Then for any $\rho>0$, the wave equation \eqref{ConcernedWaveEquation} admits a unique solution $\varphi \in C([0,\rho \varepsilon^{-1}];H^{2N}(\rN^2))$ satisfying
	\begin{equation}
	\label{MainErrorBound}
		\sup_{t \in [0,\rho \varepsilon^{-1}]}\| \varphi(t) - \varphi_{\mathrm{eff}}(t) \|_{H^{2N}} \le C_{N,\nu,\rho} \varepsilon \left(\| \boldsymbol{\alpha}(0)\|_{H^{2N+2}} + \|\boldsymbol{\alpha}(0)\|_{H^4_{1+\nu}} \right),
	\end{equation} 
	for some constant $C_{N,\nu,\rho}$ independent of sufficiently small $\varepsilon>0$.
	
	In particular, if, moreover, $\boldsymbol{\alpha}(0) \in \mathcal{S}$, then for any $\rho>0,s \in \rN$, there exists a constant $C_{s,\rho,\boldsymbol{\alpha}_0}$ such that
	\begin{equation}
		\sup_{t \in [0,\rho \varepsilon^{-1}]}\| \varphi(t) - \varphi_{\mathrm{eff}}(t) \|_{H^{s}}  \le C_{s,\rho,\boldsymbol{\alpha}_0}\varepsilon,
	\end{equation}
	for all sufficiently small $\varepsilon>0$.
\end{theorem}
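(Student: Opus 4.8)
The plan is to estimate the error $\eta:=\varphi-\varphi_{\text{eff}}$ directly. Since the data in \eqref{ConcernedWaveEquation} and those produced by $\varphi_{\text{eff}}$ agree at $t=0$ except for the $O(\varepsilon^2)$ term coming from $\partial_T\boldsymbol{\alpha}$, the error solves
\[
	\partial_t^2\eta+\mL^\bracketeps\eta=-R,\qquad \eta(0)=0,\quad \partial_t\eta(0)=-\varepsilon^2(\partial_T\alpha_j)(\varepsilon\by,0)\Phi_j(\by),
\]
with residual $R:=\partial_t^2\varphi_{\text{eff}}+\mL^\bracketeps\varphi_{\text{eff}}$. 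Plugging \eqref{TheAnsatz} into $\partial_t^2+\mL^\bracketeps$, using the two‑scale product rule for $\mL^{(0)}$ (its $O(\varepsilon)$ term is $-\mi(\partial_{Y_l}\alpha_j)(\mathcal{A}\Phi_j)_l$, $\mathcal{A}$ as in \eqref{DefmathcalA}), the expansion of \Cref{AsymptoticExpansionOfTheOperator}, and the fact that $\Phi_1,\Phi_2$ are $E_D$‑eigenmodes, one obtains
\[
	R=\varepsilon^2\me^{\mi\sqrt{E_D}t}\,\mathcal{G}_2+\varepsilon^3\me^{\mi\sqrt{E_D}t}\,\mathcal{G}_r,\qquad \mathcal{G}_2=2\mi\sqrt{E_D}\,\partial_T\alpha_j\,\Phi_j-\mi(\partial_{Y_l}\alpha_j)(\mathcal{A}\Phi_j)_l+\alpha_j\operatorname{Tr}(U\mathfrak{A}\Phi_j).
\]
Pairing $\mathcal{G}_2$ against $\Phi_1,\Phi_2$ in $L^2_\bK$ and invoking \Cref{MathcalAMatrix,MathfrakAMatrix} shows that the Dirac system \eqref{DiracEquation} is \emph{exactly} the statement that the $\operatorname{span}\{\Phi_1,\Phi_2\}$‑component of $\mathcal{G}_2$ vanishes identically; hence $\mathcal{G}_2$ is a finite sum $\sum_m s_m(\varepsilon\by)\,q_m^\perp(\by)$, with $s_m$ assembled from $\boldsymbol{\alpha},\nabla_\bY\boldsymbol{\alpha},\partial_T\boldsymbol{\alpha},U$ and each $q_m^\perp\in L^2_\bK$ orthogonal to $\Phi_1$ and $\Phi_2$. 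Because every factor is of ``slow $\times$ $\Lambda_h$‑quasi‑periodic'' type, one only has $\|R(t)\|_{L^2}\lesssim\varepsilon^2\cdot\varepsilon^{-1}=\varepsilon$, so the naive estimate $\|\eta(t)\|_{H^1}\lesssim\int_0^t\|R\|\lesssim\varepsilon^{-1}\cdot\varepsilon=1$ is worthless on $[0,\rho\varepsilon^{-1}]$; the whole point is to beat it.

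Because $V>0$ one has $\mL^\bracketeps\ge\min V>0$ uniformly in $\varepsilon$, so $\sqrt{\mL^\bracketeps}$ makes sense and the energy $\|\partial_t\eta\|_{L^2}^2+\langle\eta,\mL^\bracketeps\eta\rangle$ is conserved by the free flow; crucially one cannot use $\mL^{(0)}$ here, since $\varepsilon\mL_R$ is a \emph{second‑order} differential perturbation and $\tfrac{d}{dt}$ of the unperturbed energy picks up $-2\varepsilon\operatorname{Re}\langle\partial_t\varphi,\mL_R\varphi\rangle$, which Gronwall does not close. We therefore work only with the functional calculus of $\mL^\bracketeps$. Fix $\delta>0$ from \Cref{EnergyLevelNearED} and $\chi\in C_c^\infty(\rN)$ with $\chi\equiv1$ near $E_D$ and $\operatorname{supp}\chi\subset(E_D-\delta,E_D+\delta)$, and split $\eta=\eta^\flat+\eta^\sharp$, $\eta^\flat:=\chi(\mL^\bracketeps)\eta$, $\eta^\sharp:=(1-\chi)(\mL^\bracketeps)\eta$; each solves the wave equation with the correspondingly projected forcing $R^\flat,R^\sharp$ and data. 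For the off‑resonant piece $\eta^\sharp$, on $\Ran(1-\chi)(\mL^\bracketeps)$ one has $|\sqrt{\mL^\bracketeps}-\sqrt{E_D}|\gtrsim\delta$, so in the Duhamel formula $\eta^\sharp(t)=\cos(t\sqrt{\mL^\bracketeps})\eta^\sharp(0)+\tfrac{\sin(t\sqrt{\mL^\bracketeps})}{\sqrt{\mL^\bracketeps}}\partial_t\eta^\sharp(0)-\int_0^t\tfrac{\sin((t-s)\sqrt{\mL^\bracketeps})}{\sqrt{\mL^\bracketeps}}R^\sharp(s)\,\md s$ one integrates by parts in $s$, using $R(s)=\me^{\mi\sqrt{E_D}s}g(s)$ and that the $s$‑derivative of the spectral phase is bounded below: each integration by parts costs a bounded resolvent factor and, off the boundary, a time derivative $\partial_s g$ that gains a power of $\varepsilon$ from the slow time. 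This gives $\|\eta^\sharp(t)\|_{L^2}\lesssim\|\partial_t\eta(0)\|_{L^2}+\sup_s\|R(s)\|_{L^2}+\int_0^t\|\partial_sR(s)\|_{L^2}\,\md s\lesssim\varepsilon+\varepsilon+\varepsilon^{-1}\cdot\varepsilon^2\lesssim\varepsilon$, uniformly on $[0,\rho\varepsilon^{-1}]$.

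The resonant piece $\eta^\flat$ is the heart of the matter, and the gain there must come entirely from the orthogonality structure of $\mathcal{G}_2$. The key lemma I would prove is: for $q^\perp\in L^2_\bK$ with $\langle\Phi_1,q^\perp\rangle_{L^2_\bK}=\langle\Phi_2,q^\perp\rangle_{L^2_\bK}=0$ and $s$ in a suitable weighted Sobolev space,
\[
	\big\|\chi(\mL^\bracketeps)\big[s(\varepsilon\,\cdot)\,q^\perp\big]\big\|_{L^2}\lesssim\|\nabla_\bY s\|_{L^2}\,\|q^\perp\|_{L^2_\bK}=O(1),
\]
i.e. a full power of $\varepsilon$ better than the trivial bound $\|s(\varepsilon\cdot)q^\perp\|_{L^2}\approx\varepsilon^{-1}$. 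Its proof has two ingredients. First, $\|\chi(\mL^\bracketeps)-\chi(\mL^{(0)})\|_{L^2\to L^2}=O(\varepsilon)$: by Helffer–Sjöstrand, $\chi(\mL)=\tfrac1\pi\int\bar\partial\widetilde\chi(z)(\mL-z)^{-1}$, and the second resolvent identity gives $(\mL^\bracketeps-z)^{-1}-(\mL^{(0)}-z)^{-1}=-\varepsilon(\mL^\bracketeps-z)^{-1}\mL_R(\mL^{(0)}-z)^{-1}$, where $\mL_R(\mL^{(0)}-z)^{-1}$ is bounded ($\mL_R$ being second order with $C_b^\infty$ coefficients, uniformly in $\varepsilon$, and $(\mL^{(0)}-z)^{-1}:L^2\to H^2$) and the $|\operatorname{Im}z|^{-1}$ growth of the resolvents is absorbed by $|\bar\partial\widetilde\chi(z)|\lesssim|\operatorname{Im}z|^M$ with $M$ large. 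Second, $\chi(\mL^{(0)})[s(\varepsilon\cdot)q^\perp]$ is treated by Floquet–Bloch: its Bloch transform is concentrated in $|\bk-\bK|\lesssim\varepsilon$ (the tails controlled by $W^{N,1}$‑type norms of $s$ — this is what forces the hypothesis $\boldsymbol{\alpha}(0)\in H^4_{1+\nu}$, propagated to all $T\le\rho$ by \Cref{RegularityOfTheDiracEquation}); by \Cref{EnergyLevelNearED} the only bands meeting $\operatorname{supp}\chi$ there are $b=\pm$; and the conical expansion \eqref{PhikNearK}, $\Phi_\pm(\bK+\boldsymbol{\kappa})=\tfrac{\me^{\mi\boldsymbol{\kappa}\cdot\by}}{\sqrt2}\big(\tfrac{\kappa_1+\mi\kappa_2}{|\boldsymbol{\kappa}|}\Phi_1\pm\Phi_2+\Phi_{R,\pm}\big)$ with $\|\Phi_{R,\pm}\|\lesssim|\boldsymbol{\kappa}|$, shows — using $q^\perp\perp\{\Phi_1,\Phi_2\}$ — that the projection of $s(\varepsilon\cdot)q^\perp$ onto $\Phi_\pm(\bk)$ carries an extra factor $|\bk-\bK|=O(\varepsilon)$; squaring and integrating against the $O(\varepsilon)$‑concentrated Bloch profile yields the $O(1)$ bound. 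Applying this to each summand of $\mathcal{G}_2=\sum_m s_m(\varepsilon\cdot)q_m^\perp$ gives $\|R^\flat(t)\|_{L^2}\lesssim\varepsilon^2$; since the projected data are $O(\varepsilon)$, the energy inequality gives $\|\eta^\flat(t)\|_{H^1}\lesssim\varepsilon+\int_0^t\|R^\flat\|\lesssim\varepsilon+\varepsilon^{-1}\cdot\varepsilon^2\lesssim\varepsilon$ on $[0,\rho\varepsilon^{-1}]$.

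Adding the two pieces gives $\sup_{t\le\rho\varepsilon^{-1}}\|\eta(t)\|_{H^1}\lesssim\varepsilon$. For the $H^{2N}$ bound \eqref{MainErrorBound}, apply $(\mL^\bracketeps)^N$ to the error equation: $(\mL^\bracketeps)^N\eta$ solves the same type of wave equation, with forcing $(\mL^\bracketeps)^N R$ and data controlled by $\|\boldsymbol{\alpha}(0)\|_{H^{2N+2}}$ (this is where that regularity is used); the orthogonality structure survives at leading order because $\mL^{(0)}$ commutes with the $L^2_\bK$‑projection onto $\operatorname{span}\{\Phi_1,\Phi_2\}$ (the $\Phi_j$ being $E_D$‑eigenmodes), the lower‑order commutators producing only $O(\varepsilon)$‑relative terms that are harmless after the $\varepsilon^2$ prefactor. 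Re‑running the near/far split and closing with the uniform elliptic regularity \Cref{UniformEllipticRegularity} to convert $\|(\mL^\bracketeps)^N\eta\|_{L^2}+\|\eta\|_{L^2}$ into $\|\eta\|_{H^{2N}}$ yields \eqref{MainErrorBound}. The final Schwartz assertion is immediate, since $\mathcal{S}\subset H^{2N+2}\cap H^4_{1+\nu}$ for every $N,\nu>0$ and $H^{2N}\hookrightarrow H^s$ once $2N\ge s$. The main obstacle is the key lemma of the previous paragraph — the resonant estimate carried out via the functional calculus of the \emph{perturbed} operator $\mL^\bracketeps$ together with the resolvent comparison it rests on — which is precisely where the second‑order nature of the strain, fatal to the classical Floquet–Bloch approach, is absorbed.
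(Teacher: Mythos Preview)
Your proposal is correct and takes essentially the same route as the paper: derive the error equation, split spectrally in $\mL^\bracketeps$ near/far from $E_D$, handle the off-resonant piece by integration by parts in the Duhamel integral against the oscillatory phase, and for the resonant piece pass from $\chi(\mL^\bracketeps)$ to $\chi(\mL^{(0)})$ via the Helffer--Sj\"ostrand resolvent comparison and then run the Floquet--Bloch computation where the Dirac equation kills the leading term (your ``key lemma'' about $q^\perp$ is exactly the paper's \Cref{DiracPartBound}, just repackaged --- the decomposition $\mathcal G_2=\sum s_m q_m^\perp$ is legitimate, obtained by subtracting from each fast factor its $\operatorname{span}\{\Phi_1,\Phi_2\}$-component and noting the Dirac equation cancels what is left). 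One simplification: for the $H^{2N}$ upgrade you do not need to track survival of the orthogonality under $(\mL^\bracketeps)^N$; on $\operatorname{Ran}\chi(\mL^\bracketeps)$ the operator $(\mL^\bracketeps)^N$ is bounded because the spectrum there is compact, so the resonant bound transfers for free, and on the high-energy side the integration-by-parts factor makes $(\mL^\bracketeps)^N S^{-1}(\sqrt{E_D}-\sigma S)^{-1}$ bounded $H^{2N-2}\to L^2$ --- this is how the paper closes it in Section~\ref{SecDFEE}.
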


The case $V\equiv0$, which causes a singularity at $0 \in \sigma \left( \mL^\bracketeps \right)$, should be treated separately. To keep the deduction clean, we no longer track the derivatives bounds and only display a shorter version here. Explicit bounds in terms of Sobolev norms of \(\alpha\), analogous to those in \Cref{MainTheorem1}, can be obtained by the same argument. We omit them in order to keep the statement focused on the low-energy mechanism.
\begin{theorem}
\label{MainTheorem2}
 Suppose $(A,V)$ is a pair of honeycomb media, $(\bK,E_D)$ is a Dirac point, $V \equiv 0$, and $\bu:\rN^2 \to \rN^2$ is a deformation such that $U=D_\bY\bu \in C_b^\infty$. Assume that $\boldsymbol{\alpha}(0)=(\alpha_{10},\alpha_{20}) \in \mathcal{S}$. Define $\varphi_{\mathrm{eff}}$ through \eqref{TheAnsatzMR} with $\boldsymbol{\alpha} = (\alpha_1,\alpha_2)$ satisfying the Dirac equation \eqref{DiracEquation}. Then for any $\rho>0$ and $s \ge 0$, the wave equation \eqref{ConcernedWaveEquation} admits a unique solution $\varphi \in C^\infty([0,\rho\varepsilon^{-1}] \times \rN^2)$ satisfying
	\begin{equation}\label{MainErrorBoundV0}
		\sup_{t \in [0,\rho \varepsilon^{-1}]}\| \varphi(t) - \varphi_{\mathrm{eff}}(t) \|_{H^s} \le C_{s,\rho,\boldsymbol{\alpha}_0} \varepsilon,
	\end{equation} 
	for some constant $C_{s,\rho,\boldsymbol{\alpha}_0}$ independent of sufficiently small $\varepsilon>0$.
\end{theorem}

The well-posedness of the wave equation \eqref{ConcernedWaveEquation} is a simple application of the classical theory; see \cite{evans2022partial}. To show that the ansatz \eqref{TheAnsatzMR}, provided that $\boldsymbol{\alpha}$ satisfies the Dirac equation \eqref{DiracEquation}, approximates nicely the solution $\varphi$, we put $\eta = \varphi - \varphi_{\mathrm{eff}}$ and derive (Appendix \ref{AppAEEE}) the following equation for $\eta$
\begin{equation}
\label{ErrorEquation}
\left\{
\begin{aligned}
	\frac{\partial^2 \eta}{\partial t^2} + \mL^{(\varepsilon)}\eta &= \me^{\mi \sqrt{E_D}t}\left[ F_1(\varepsilon t)+F_2(\varepsilon t)\right], \\
	\eta(\by,0) & = 0, \\
	\frac{\partial \eta}{\partial t}(\by,0) &= F_0(\by),
\end{aligned}\right.
\end{equation}
where
\begin{equation}
	F_0(\by) = -\varepsilon^2 J_\varepsilon(\by)^{-1/2}\partial_T \alpha_k(\varepsilon \by,0)\Phi_k(\by).
\end{equation}
\begin{equation}
\label{ExpressionOfF1}
\begin{aligned}
	F_1(\by,\varepsilon t) &= \varepsilon^2 \Big(-2\mi \sqrt{E_D} \partial_T \alpha_k(\varepsilon \by,\varepsilon t) \Phi_k(\by) \\& \qquad + \nabla_\bY \alpha_k(\varepsilon \by,\varepsilon t) \cdot \mi \mathcal{A}\Phi_k(\by) - \alpha_k(\varepsilon \by,\varepsilon t) \operatorname{Tr}(U(\varepsilon \by)\mathfrak{A}\Phi_k(\by)) \Big),
\end{aligned}
\end{equation}
and $F_2(\varepsilon t)$ satisfies the bounds 
\begin{equation}\label{F2Bound}
	\|F_2(\varepsilon t)\|_{H^s} \lesssim \varepsilon^2\left(\|\boldsymbol{\alpha}(\varepsilon t)\|_{H^{s+2}} + \| \partial^2_T\boldsymbol{\alpha}(\varepsilon t)\|_{H^s}\right),
\end{equation}
for some constant independent of $\varepsilon>0$ and $t\ge 0$. Above and below, the Einstein summation convention is always adopted. The estimate \eqref{MainErrorBound} is a combination of \eqref{G0HsBound}, \eqref{G2HsBound} and \Cref{G1H2NBound}, together with regularity \Cref{RegularityOfTheDiracEquation}. The estimate \eqref{MainErrorBoundV0} requires a more delicate investigation into the near zero part of the system, and is done in \Cref{SecTS0}.

\begin{remark}
	In \cite{xie2019wave}, where the perturbation is assumed to be of first-order, the authors treat the perturbation part of $\mL^\bracketeps\eta$ as an unknown external source term, that is, 
	\begin{equation*}
		\partial^2_t\eta(t) + \mL^{(0)} \eta(t) = \me^{\mi \sqrt{E_D}t}\left[ F_1(\varepsilon t)+F_2(\varepsilon t)\right] + \varepsilon \mL_R\eta(t).
	\end{equation*}
The wave propagator contributes a factor $\left(\mL^{(0)}\right)^{-1/2}$ to the RHS and thus helps control the unknown source term. This can be seen by defining the standard wave energy
	\begin{equation*}
		E(t):= \|\partial_t \eta(t)\|_{L^2}^2 + \langle \eta(t),\mL^{(0)}\eta(t)\rangle_{L^2(\rN^2)}.
	\end{equation*}
		Differentiating with respect to $t$, we obtain
	\begin{equation*}
	\begin{aligned}
		\partial_t E(t) &= 2 \Re\langle \partial_t^2 \eta(t),\partial_t \eta(t) \rangle +  2\Re\langle \partial_t \eta(t),\mL^{(0)}\eta(t)\rangle_{L^2(\rN^2)} \\
						&\lesssim \varepsilon \Re\langle \partial_t\eta(t),\mL_R\eta(t)\rangle + \varepsilon \|\partial_t \eta(t)\|_{L^2}^2 \\
						&\lesssim \varepsilon \|\partial_t \eta(t)\|_{L^2}^2 + \varepsilon\| \eta \|_{H^1}^2 \approx \varepsilon E(t).
	\end{aligned}
	\end{equation*}
Hence, the growth of the energy can be controlled, via Gr\"onwall's inequality. In the present case, the growth of this energy cannot be bounded as above. This shows that one cannot control the dynamics via the energy associated with the unperturbed operator alone. The analysis must engage directly with the perturbed operator.

The method used in this article is only available in dealing with linear perturbations. For the treatments in the presence of non-linear perturbations, see \cite{arbunich2016rigorous,xie2021wave}.
\end{remark}

\begin{remark}
	The case $V \equiv 0$ presents another technical difficulty, since the spectrum of $\mL^\bracketeps$ now contains $0$, a singular point for the wave propagator. A direct propagator estimate breaks down in this case, as the singularity produces an extra linear growth in time. We address this difficulty by showing that this singular growth is in fact artificial for the relevant source terms. More precisely, after projecting onto the low-energy spectral subspace, the oscillatory factor $\me^{\mi\sqrt{E_D}t}$ cancels the leading low-energy behavior, allowing us to recover the desired bound by isolating the leading low-energy forced response through an explicit resolvent construction. See \Cref{SubsecProofOfBoundOnLowEnergySourceTerm} for details.
\end{remark}

\section{Decompositions and Far-Energy Estimate}
\label{SecDFEE}
We now begin the proof. In this section, we first discuss the case $V \ge 0$ but $V \not\equiv 0$. This assumption bounds the spectra of all $\mL^{(\varepsilon)}$'s by below, away from $0$, as follows.
\begin{lemma}\label{No0Spectrum}
	For sufficiently small $\varepsilon > 0$ and periodic $V \in C^\infty$, if $V \ge 0$ and $V \not\equiv 0$, there exists $c>0$ independent of $\varepsilon$ such that
	\begin{equation*}
		\sigma \left( \mL^\bracketeps \right) \subset [c,+\infty).
	\end{equation*}
\end{lemma}
This follows from the standard lower-bound theory for periodic elliptic operators, see \cite[Sections~4--5]{kuchment2016overview}, together with the uniform equivalence, for sufficiently small $\varepsilon$, of the quadratic forms of $\mL^\bracketeps$ and $\mL^{(0)}$.

Solve the error equation by means of functional calculus:
\begin{equation}
\begin{aligned}
		\eta(t) &= \frac{\me^{\mi \sqrt{\mL^{(\varepsilon)}}t}- \me^{-\mi \sqrt{\mL^{(\varepsilon)}}t}}{2\mi \sqrt{\mL^{(\varepsilon)}} } F_0\\
		 &\quad + \int_0^t \frac{\me^{\mi \sqrt{\mL^{(\varepsilon)}}(t-s)}- \me^{-\mi \sqrt{\mL^{(\varepsilon)}}(t-s)}}{2\mi \sqrt{\mL^{(\varepsilon)}}} \me^{\mi \sqrt{E_D} s}[F_1(\varepsilon s)+F_2(\varepsilon s)] \md s.	
\end{aligned}
\end{equation}
We further introduce
\begin{equation*}
	\begin{aligned}
		G_0(t) &:= \frac{\me^{\mi \sqrt{\mL^{(\varepsilon)}}t}- \me^{-\mi \sqrt{\mL^{(\varepsilon)}}t}}{2\mi \sqrt{\mL^{(\varepsilon)}} } F_0, \\
		G_1(t) &:= \int_0^t \frac{\me^{\mi \sqrt{\mL^{(\varepsilon)}}(t-s)}- \me^{-\mi \sqrt{\mL^{(\varepsilon)}}(t-s)}}{2\mi \sqrt{\mL^{(\varepsilon)}}} \me^{\mi \sqrt{E_D} s}F_1(\varepsilon s) \md s, \\
		G_2(t) &:= \int_0^t \frac{\me^{\mi \sqrt{\mL^{(\varepsilon)}}(t-s)}- \me^{-\mi \sqrt{\mL^{(\varepsilon)}}(t-s)}}{2\mi \sqrt{\mL^{(\varepsilon)}}} \me^{\mi \sqrt{E_D} s}F_2(\varepsilon s) \md s,
	\end{aligned}
\end{equation*}
so
\begin{equation*}
	\eta(t) = G_0(t) + G_1(t) + G_2(t).
\end{equation*}
These $G_i$'s are to be dealt with separately. The estimates for $G_0$ and $G_2$ are straightforward and is given right below, while those for $G_1$ require a lengthier analysis over two sections.

\subsection{\texorpdfstring{Estimate of $G_0$ and $G_2$}{Estimate of G0 and G2}}

In this case, the two terms are estimated by the uniform bound of the propagators, guaranteed by \Cref{No0Spectrum}. Starting with $L^2$ bound, we have
\begin{equation*}
	\|G_0(t)\|_{L^2} \lesssim \| F_0 \|_{L^2} \lesssim \varepsilon \| \partial_T \boldsymbol{\alpha}(0)\|_{L^2} ,
\end{equation*}
\begin{equation*}
	\|G_2(t)\|_{L^2} \lesssim \int_0^t \| F_2(\varepsilon s)\|_{L^2} \md s \lesssim \varepsilon^2 t \sup_{s \in [0,t]}\left(\|\boldsymbol{\alpha}(\varepsilon s)\|_{H^{2}} + \| \partial^2_T\boldsymbol{\alpha}(\varepsilon s)\|_{L^2}\right).
\end{equation*}
To deal with $H^{2N}$ norms, we utilize \Cref{UniformEllipticRegularity} to write
\begin{equation*}
\begin{aligned}
	\|G_0(t) \|_{H^{2N}} &\lesssim_N \left\| \left( \mL^{(\varepsilon)} \right)^N G_0(t) \right\|_{L^2} + \|G_0(t)\|_{L^2} \\
						&= \left\| \left( \mL^{(\varepsilon)} \right)^N \frac{\me^{\mi \sqrt{\mL^{(\varepsilon)}}t}- \me^{-\mi \sqrt{\mL^{(\varepsilon)}}t}}{2\mi \sqrt{\mL^{(\varepsilon)}} } F_0\right\|_{L^2} + \|G_0(t)\|_{L^2}.
\end{aligned}
\end{equation*}
Recall from spectral theory that functions of an operator commute with each other, so we have
\begin{equation}
\label{G0HsBound}
\begin{aligned}
	\|G_0(t) \|_{H^{2N}} &\lesssim_N \left\| \frac{\me^{\mi \sqrt{\mL^{(\varepsilon)}}t}- \me^{-\mi \sqrt{\mL^{(\varepsilon)}}t}}{2\mi \sqrt{\mL^{(\varepsilon)}} } \left( \mL^{(\varepsilon)} \right)^NF_0\right\|_{L^2} + \|G_0(t)\|_{L^2} \\
						&\lesssim_N \left \|\left( \mL^{(\varepsilon)} \right)^N F_0 \right\|_{L^2}+ \|G_0(t)\|_{L^2} \\
						&\lesssim_N \|F_0\|_{H^{2N}} + \varepsilon \| \partial_T \boldsymbol{\alpha}(0)\|_{L^2} \lesssim \varepsilon\| \partial_T \boldsymbol{\alpha}(0)\|_{H^{2N}}.
\end{aligned}
\end{equation}
And a similar technique applied to $G_2$ yields
\begin{equation}
\label{G2HsBound}
	\| G_2(t) \|_{H^{2N}} \lesssim_N  \varepsilon^2 t \sup_{s \in [0,t]}\left(\|\boldsymbol{\alpha}(\varepsilon s)\|_{H^{2N+2}} + \| \partial^2_T\boldsymbol{\alpha}(\varepsilon s)\|_{H^{2N}}\right).
\end{equation}

\subsection{\texorpdfstring{Decompositions of $G_1$}{Decompositions of G1}}
Recall that
\begin{equation*}
	G_1(t) = \int_0^t \frac{\me^{\mi \sqrt{\mL^{(\varepsilon)}}(t-s)}- \me^{-\mi \sqrt{\mL^{(\varepsilon)}}(t-s)}}{2\mi \sqrt{\mL^{(\varepsilon)}}} \me^{\mi \sqrt{E_D} s}F_1(\varepsilon s) \md s.
\end{equation*}
Our strategy is to separate the resonance part, which is $F_1$ projected onto a neighborhood of $E_D$, from the rest of the spectrum. To do so, we need to introduce a partition of unity for $\sigma\left( \mL^\bracketeps \right)$. For a systematic introduction to spectral theory and functional calculus, which is used a lot here, we refer to \cite{reed1978i,helffer2013spectral}. 

Let $\chi_D$ be a smooth bump function equal to $1$ in a $\delta$-vicinity of $E_D$, and supported in $(E_D-2\delta,E_D+2\delta)$. Here, $\delta$ can be chosen to be very small, and we will shrink it when needed, without relabeling. Then by \Cref{EnergyLevelNearED} we have
\begin{equation*}
	E_b(\bk) \in \supp\chi_D \Longrightarrow b \in \{-,+\} \text{ or } (|\bk-\bK|\ge k_0 \text{ and }b \le B). 
\end{equation*}
Put $\chi_L = \mathds{1}_{(-\infty,E_D)}(1-\chi_D)$ and $\chi_H=\mathds{1}_{(E_D,+\infty)}(1-\chi_D)$, both of which are supported away from $E_D$. The identity is then divided into $3$ parts
\begin{equation*}
	1= \chi_L + \chi_D +\chi_H,
\end{equation*}
through which we define the functional calculus
\begin{equation}
	P_L^{(\varepsilon)}:= \chi_L \left(\mL^{(\varepsilon)} \right),\quad P_D^{(\varepsilon)}:= \chi_D \left(\mL^{(\varepsilon)} \right),\quad P_H^{(\varepsilon)}:= \chi_H \left(\mL^{(\varepsilon)} \right).
\end{equation}
They are some kinds of smoothed versions of spectral projections, and we refer to them as regularized spectral projections, although they are not genuine projectors.
\begin{remark}
	The natural devices for decomposition with respect to spectra should be spectral projections. However, the classical spectral projections fail to stay stable under small perturbations and invalidate \Cref{ConvergenceOfSpectralProjections} below. To overcome this, we replace the indicator functions with some smooth cut-offs in the definition of spectral projections.
\end{remark}
We decompose it into $3$ parts using the regularized spectral projections:
\begin{equation}
	G_1 = G_L + G_D +G_H,\quad G_J = P_J^{(\varepsilon)}G_1,\quad J=L,D,H.
\end{equation}
Near the energy level where the resonance occurs, $P_L^{(\varepsilon)}F_1$ and $P_H^{(\varepsilon)}F_1$ are approximately $0$. These two terms are therefore straightforward to estimate. For the resonance source term $P_D^{(\varepsilon)}F_1$, we will use the Dirac equation to eliminate it.

In this section, we complete the estimate for $G_L$ and $G_H$ and leave the one for $G_D$ to the next section. First, collect the results here.
\begin{proposition}
\label{G1H2NBound}
 Let $\boldsymbol{\alpha}$ satisfy the Dirac equation. Then for any $N \ge 3,\nu >0$, there exists a constant $C_{N,\nu}$ depending only on $N,\nu$ such that
	\begin{equation}
	\begin{aligned}
		\| G_1(t) \|_{H^{2N}}^2 \! &\le \! C_{N,\nu} \!\left( \varepsilon^2 \!\! + \! \varepsilon^4t^2 \right)\! \sup_{s \in [0,t]} \! \Big(\! \|\boldsymbol{\alpha}(\varepsilon s) \|_{H^{2N \!-\!1}}^2 \!\!+\! \|\partial_T\boldsymbol{\alpha}(\varepsilon s)\|_{H^{2N\!-\!1}}^2\!\! +\! \|\partial^2_T\boldsymbol{\alpha}(\varepsilon s)\|_{H^{2N\!-\!2}}^2\! \Big)	 \\
			&\quad + C_{N,\nu} \varepsilon^4 t^2 \sup_{s\in[0,t]}\left(\| \boldsymbol{\alpha}(\varepsilon s) \|_{H^{4}_{1+\nu}}^2 + \| \partial_T\boldsymbol{\alpha}(\varepsilon s) \|_{H^{3}_{1+\nu}}^2 \right),
	\end{aligned}
	\end{equation}
	for all sufficiently small $\varepsilon>0$. Whereas, for $N \le 2,\nu>0$, we have  
	\begin{equation}
		\| G_1(t) \|_{H^{2N}}^2 \le C_\nu \left( \varepsilon^2 +\varepsilon^4t^2 \right) \sup_{s \in [0,t]} \left( \|\boldsymbol{\alpha}(\varepsilon s) \|_{H^{4}_{1+\nu}}^2 + \|\partial_T\boldsymbol{\alpha}(\varepsilon s)\|_{H^{3}_{1+\nu}} ^2\right),
	\end{equation}
	for all sufficiently small $\varepsilon>0$ and some constant $C_\nu$ depending only on $\nu$.
\end{proposition}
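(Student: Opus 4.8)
The plan is to retain the decomposition $G_1=G_L+G_D+G_H$ with $G_J=P_J^{(\varepsilon)}G_1$ introduced above, to bound the three pieces by genuinely different mechanisms --- the ``far-energy'' parts $G_L,G_H$ in the present section, the resonant part $G_D$ in \Cref{SecCR} --- and then to add the estimates. The organizing principle throughout is that $F_1(\varepsilon s)$ is spectrally concentrated near $E_D$: each constituent $\alpha_k(\varepsilon\cdot)\Phi_k$, $(\partial_{Y_j}\alpha_k)(\varepsilon\cdot)\,\mathcal A\Phi_k$ and $\alpha_k(\varepsilon\cdot)\operatorname{Tr}(U\mathfrak A\Phi_k)$ is a slowly-modulated $\bK$-quasiperiodic profile, so it has Bloch content concentrated in $O(\varepsilon)$-neighbourhoods of the three vertices of the $\bK$-orbit, and by \Cref{EnergyLevelNearED} the only bands whose energy reaches $\operatorname{supp}\chi_D$ near those vertices are the touching pair $b=\pm$.

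For $G_L$ and $G_H$ I would argue as follows. Since $V>0$ gives $\mL^{(\varepsilon)}\ge c>0$ uniformly in small $\varepsilon$ while $\operatorname{supp}\chi_L,\operatorname{supp}\chi_H$ avoid a fixed neighbourhood of $E_D$, the function $\lambda\mapsto\chi_J(\lambda)\sin(\sqrt\lambda(t-s))/\sqrt\lambda$ is bounded on $[c,\infty)$ uniformly in $t,s$, whence $\|G_J(t)\|_{L^2}\lesssim t\sup_{s\le t}\|P_J^{(\varepsilon)}F_1(\varepsilon s)\|_{L^2}$; the $H^{2N}$ bound follows by commuting $(\mL^{(\varepsilon)})^N$ through the propagator and invoking the uniform elliptic regularity of \Cref{UniformEllipticRegularity}. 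It then remains to see that $P_J^{(\varepsilon)}F_1(\varepsilon s)$ is small in $H^{2N}$ with a favourable loss of derivatives. For this I would first replace $\mL^{(\varepsilon)}$ by $\mL^{(0)}$ inside $\chi_J(\cdot)$ at the cost of an $O(\varepsilon)$ error, using the stability of the regularized spectral projections (\Cref{ConvergenceOfSpectralProjections}, itself a resolvent/functional-calculus estimate), and then estimate $\chi_J(\mL^{(0)})$ applied to each constituent $\beta(\varepsilon\cdot)\Psi$ ($\Psi\in L^2_\bK$, $\beta$ a derivative of $\alpha_k$) by Floquet--Bloch: its Bloch coefficient at $\bk$ is, schematically, $\sum_{\bG}c_\bG\,\varepsilon^{-2}\widehat\beta\bigl((\bk-\bK-\bG)/\varepsilon\bigr)$, while on $\operatorname{supp}\chi_J$ the conical shape of $E_\pm$ forces $|\bk-\bK-\bG|\gtrsim\delta$, so this is $O(\varepsilon)$ in $L^2(\mathbb{B})$ --- and $O(\varepsilon^2)$ once a further derivative or moment of $\beta$ is available, which is where the weighted norms ($H^s_{1+\nu}\hookrightarrow W^{s,1}$) first enter.

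The crux is $G_D$. In the integrand the phase product $\me^{\pm\mi\sqrt{\mL^{(\varepsilon)}}(t-s)}\me^{\mi\sqrt{E_D}s}$ separates into a non-resonant branch (total phase $\sim\sqrt{E_D}(2s-t)$ near $\mL^{(\varepsilon)}=E_D$, disposed of by a non-stationary-phase integration by parts in $s$) and a resonant branch (total phase essentially $\sqrt{E_D}t$). Integrating the resonant branch by parts in $s$ produces boundary terms and an integral of $\partial_sF_1(\varepsilon s)$, all divided by the frequency gap $\sqrt{\mL^{(\varepsilon)}}-\sqrt{E_D}$, which is \emph{singular} at the Dirac point since $\sqrt{E_\pm(\bK+\boldsymbol\kappa)}-\sqrt{E_D}\sim\pm\nu_{_F}|\boldsymbol\kappa|/(2\sqrt{E_D})$. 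The cancellation that controls this singularity is exactly that $\boldsymbol\alpha$ solves the Dirac equation~(\ref{DiracEquation}): by \Cref{MathcalAMatrix,MathfrakAMatrix} the projection of $F_1$ onto the \emph{exact} resonant eigenfunctions $\Phi_\pm(\bk)$ vanishes to leading order, and the surviving residue carries a factor $\Phi_{R,\pm}=O(|\boldsymbol\kappa|)$ from \Cref{EnergyLevelNearED}, which cancels the $|\boldsymbol\kappa|^{-1}$; what remains is a convergent but genuinely singular integral of the concentrated envelope against $|\boldsymbol\kappa|^{-1}$, whose control in two dimensions forces $L^1$-type (equivalently $W^{N,1}$, hence $H^{N}_{1+\nu}$) bounds on $\boldsymbol\alpha$ and $\partial_T\boldsymbol\alpha$ --- the source of the $H^4_{1+\nu}$, $H^3_{1+\nu}$ terms in the statement. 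I expect this $G_D$ analysis, together with the resolvent estimates needed both to transfer $P_D^{(\varepsilon)}$-data to $\mL^{(0)}$-data and to make sense of the division by the vanishing gap, to be the main obstacle; it is precisely here that the second-order nature of $\mL^{(\varepsilon)}=\mL^{(0)}+\varepsilon\mL_R$ obstructs the older energy-method arguments.

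Finally, to assemble \Cref{G1H2NBound} one adds the three bounds: $G_L$ and $G_H$ contribute the $(\varepsilon^2+\varepsilon^4t^2)$-times-plain-Sobolev part, the derivative counts $H^{2N-1}$ and $H^{2N-2}$ reflecting the losses above, together with --- in the low-regularity range --- the weighted part, while $G_D$ contributes the $\varepsilon^4t^2$-times-weighted part. The dichotomy $N\ge3$ versus $N\le2$ enters only at this last step through \Cref{RegularityOfTheDiracEquation}: when $2N-1>3$ the quantities $\sup_s\|\boldsymbol\alpha(\varepsilon s)\|_{H^{2N-1}}$, $\sup_s\|\partial_T\boldsymbol\alpha(\varepsilon s)\|_{H^{2N-1}}$ and $\sup_s\|\partial_T^2\boldsymbol\alpha(\varepsilon s)\|_{H^{2N-2}}$ are propagated by the initial data, whereas for $N\le2$ one falls back on the uniform weighted estimate available for $\boldsymbol\alpha\in H^4_{1+\nu}$, which dominates the plain norms of the relevant (low) order.
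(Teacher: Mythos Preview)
Your treatment of the far-energy pieces $G_L,G_H$ contains a genuine gap. You propose to bound $\|G_J(t)\|\lesssim t\sup_s\|P_J^{(\varepsilon)}F_1(\varepsilon s)\|$ and then argue that $P_J^{(0)}F_1$ is small via Floquet--Bloch, writing ``on $\operatorname{supp}\chi_J$ the conical shape of $E_\pm$ forces $|\bk-\bK-\bG|\gtrsim\delta$''. But $P_L^{(0)}$ and $P_H^{(0)}$ do not project onto the bands $b=\pm$ alone: they collect \emph{all} $(b,\bk)$ with $E_b(\bk)\in\operatorname{supp}\chi_J$, and in particular every band $b\neq\pm$ with $\bk$ near $\bK$ (these have $E_b(\bk)\notin(E_D-2\delta,E_D+2\delta)$ by \Cref{EnergyLevelNearED}, hence lie in $\operatorname{supp}\chi_L\cup\operatorname{supp}\chi_H$). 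For such $(b,\bk)$ the Bloch coefficient $\langle\Phi_b(\bk),F_1\rangle$ carries the profile's Fourier transform at a \emph{small} argument and the generically non-zero factor $\langle\Phi_b(\bK),\Psi_r\rangle$ (e.g.\ $\langle\Phi_b(\bK),\mathcal A\Phi_k\rangle$), so there is no gain. Consequently $\|P_H^{(0)}F_1\|_{L^2}$ is of the same order $\varepsilon$ as $\|F_1\|_{L^2}$ itself, and your crude propagator bound yields only $\varepsilon^2t^2$, which is too large.

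The paper's mechanism for $G_L,G_H$ is different: it does \emph{not} claim $P_J^{(\varepsilon)}F_1$ is small, but instead integrates by parts in $s$ using that on $\operatorname{Ran}P_J^{(\varepsilon)}$ the phase $\me^{\mi(\sqrt{E_D}-\sigma S)s}$ oscillates with frequency bounded below, so $S^{-1}(\sqrt{E_D}\,\Id-\sigma S)^{-1}$ is bounded there. This converts the time integral into boundary terms (giving the $\varepsilon^2$) plus $\varepsilon\int_0^t\ldots\partial_TF_1$ (giving the $\varepsilon^4t^2$), with no smallness of the source required. Your $G_D$ sketch is also more elaborate than needed: the paper does not divide by the vanishing gap $\sqrt{\mL^{(\varepsilon)}}-\sqrt{E_D}$ at all, but simply bounds the wave kernel uniformly (using $V>0$) to get $\|G_D(t)\|\lesssim t\sup_s\|P_D^{(\varepsilon)}F_1\|$, transfers $P_D^{(\varepsilon)}\to P_D^{(0)}$ via \Cref{ConvergenceOfSpectralProjections}, and then shows $\|P_D^{(0)}F_1\|\lesssim\varepsilon^2$ by Floquet--Bloch plus the Dirac cancellation --- which is legitimate precisely because $\operatorname{supp}\chi_D$, unlike $\operatorname{supp}\chi_L\cup\operatorname{supp}\chi_H$, \emph{does} restrict to $b=\pm$ near $\bK$.
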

This proposition, together with \eqref{G0HsBound} and \eqref{G2HsBound} bounds $\| \eta\|_{H^{2N}}$ by norms of $\boldsymbol{\alpha}$, which are in term bounded by norms of $\boldsymbol{\alpha}(0)$ through \Cref{RegularityOfTheDiracEquation}. Consequently, the proof of \Cref{MainTheorem1} shall be completed upon this.

\subsection{Far-energy estimate}
Let us take $G_H$ for example. We write for simplicity that $S = \sqrt{\mL^\bracketeps}$, then the expression of $G_H$ takes the form
\begin{equation*}
	G_H(t) = \sum_{\sigma=\pm} \frac{\me^{\mi\sigma S t}}{2\mi}\int_0^t \me^{\mi \left(\sqrt{E_D} \Id- \sigma S\right)s} S^{-1} P_H^\bracketeps F_1(\varepsilon s)\md s.
\end{equation*}
The Stone's theorem for a group of unitary operators allows us to differentiate that
\begin{equation*}
	\me^{\mi \left(\sqrt{E_D} \Id- \sigma S\right)s} S^{-1} = -\mi S^{-1}\left( \sqrt{E_D}\Id - \sigma S  \right)^{-1} \partial_s \left(\me^{\mi \left(\sqrt{E_D} \Id- \sigma S\right)s} \right).
\end{equation*}
Going back to $G_H$ and integrating by parts, we have
\begin{equation}
\label{GHIntegratedByParts}
	\begin{aligned}
		-G_H(t) &= \sum_{\sigma=\pm} \frac{\me^{\mi\sigma S t}}{2} \int_0^t \! S^{-1} \!\left( \! \sqrt{E_D}\Id \! -\! \sigma S  \right)^{-1}\!P_H^\bracketeps F_1(\varepsilon s) \md \left(\me^{\mi \left(\sqrt{E_D} \Id- \sigma S\right)s} \right) \\
			&= \! S^{-1} \!\left( \! \sqrt{E_D}\Id \! -\! \sigma S  \right)^{-1}\! \sum_{\sigma=\pm} \! \frac{\me^{\mi\sigma S t}}{2} \!\left( P_H^\bracketeps F_1(\varepsilon t) \me^{\mi \left(\sqrt{E_D} \Id- \sigma S\right)t} - P_H^\bracketeps F_1(0)   \right) \\
			&\quad + \! \varepsilon S^{-1} \!\left( \! \sqrt{E_D}\Id \! -\! \sigma S  \right)^{-1}\! \sum_{\sigma=\pm} \! \frac{\me^{\mi\sigma S t}}{2} \! \int_0^t \me^{\mi \left(\sqrt{E_D} \Id- \sigma S\right)s} P_H^\bracketeps \partial_T F_1(\varepsilon s) \md s.
	\end{aligned}
\end{equation}
The validity of all these computations can be justified by the spectral decomposition theorem; see \cite{reed1978i}.

In the subspace $\mathrm{Ran}\left(P_H^\bracketeps \right)$, it holds that
\begin{equation}
\label{SInverseBounds}
	 \left\| S^{-1}\left( \sqrt{E_D}\Id + S  \right)^{-1}\right\| \lesssim \frac{1}{E_D+\delta}, \quad \left\| S^{-1}\left( \sqrt{E_D}\Id - S  \right)^{-1}\right\| \lesssim \frac{1}{\delta E_D}.
\end{equation}
Both are controlled by a constant independent of $\varepsilon$ and $t$, because our $E_D$ and $\delta$ are fixed and related only to the original honeycomb structure. Hence, taking $L^2$-norms on both sides of \eqref{GHIntegratedByParts}, and using the unitarity of operators of the form $\me^{\mi L t}$, we have
\begin{equation*}
	\begin{aligned}
		\| G_H(t) \|_{L^2}^2 &\lesssim \left( \frac{1}{E_D + \delta} + \frac{1}{\delta E_D} \right)^2 \left( \left\| P_H^\bracketeps F_1(\varepsilon t) \right\|_{L^2}^2 + \left\| P_H^\bracketeps F_1(0) \right\|_{L^2}^2\right)  \\
							&\qquad + \left( \frac{1}{E_D + \delta} + \frac{1}{\delta E_D} \right)^2\varepsilon^2 t \int_0^t \left\| P_H^\bracketeps \partial_T F_1(\varepsilon s) \right\|_{L^2}^2 \md s \\
							& \lesssim (1 + \varepsilon^2 t^2) \sup_{s \in [0,t]} \left( \left\|  F_1(\varepsilon s) \right\|_{L^2}^2 + \left\| \partial_T F_1(\varepsilon s) \right\|_{L^2}^2\right).
	\end{aligned} 
\end{equation*}

A similar result holds for $G_L$ with $L$ in place of $H$. Using the lower spectral bound from \Cref{No0Spectrum} and \eqref{ExpressionOfF1}, we get the following bounds with only a minor change and bounding operators like \eqref{SInverseBounds}.
	\begin{align}
		\label{GLBound}\| G_L(t) \|_{L^2}^2 &\lesssim \left( \varepsilon^2 + \varepsilon^4 t^2\right) \sup_{s \in [0,t]}\left( \|\boldsymbol{\alpha}(\varepsilon s)\|_{H^{1}}^2 + \|\partial_T\boldsymbol{\alpha}(\varepsilon s)\|_{H^1}^2 +\|\partial^2_T\boldsymbol{\alpha}(\varepsilon s)\|_{L^2}^2\right), \\
		\label{GHBound}\| G_H(t) \|_{L^2}^2 &\lesssim \left( \varepsilon^2 + \varepsilon^4 t^2\right) \sup_{s \in [0,t]}\left( \|\boldsymbol{\alpha}(\varepsilon s)\|_{H^{1}}^2 + \|\partial_T\boldsymbol{\alpha}(\varepsilon s)\|_{H^1}^2 + \|\partial^2_T\boldsymbol{\alpha}(\varepsilon s)\|_{L^2}^2 \right).
	\end{align}

\subsection{Near-energy system}
For
\begin{equation*}
	G_D(t) = \int_0^t \frac{\me^{\mi \sqrt{\mL^{(\varepsilon)}}(t-s)}- \me^{-\mi \sqrt{\mL^{(\varepsilon)}}(t-s)}}{2\mi \sqrt{\mL^{(\varepsilon)}}} \me^{\mi \sqrt{E_D} s}P_D^{(\varepsilon)} F_1(\varepsilon s) \md s,
\end{equation*}
the resonance causes a secular term of order $O(\varepsilon t)$ in the error, so we have to eliminate the source. The problem is, that only from the perspective of the original operator $\mL^{(0)}$ can we bound the resonance source in a satisfactory manner. That is to say, we must pull $P_D^\bracketeps F_1$ back to $\sigma\left( \mL^{(0)}\right)$. Here goes our essential observation.
\begin{proposition}
\label{ConvergenceOfSpectralProjections}
	For sufficiently small $\varepsilon>0$, we have
	\begin{equation}
		\left\| P_D^{(\varepsilon)} - P_D^{(0)} \right\|_{L^2 \to L^2} \le C \varepsilon,
	\end{equation}
	for some constant $C$ independent of $\varepsilon$.
\end{proposition}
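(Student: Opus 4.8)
The plan is to express both regularized spectral projections as contour integrals of the resolvents and to estimate the difference via the second resolvent identity. Since $\chi_D \in C_c^\infty(\rN)$ is supported in $(E_D - 2\delta, E_D + 2\delta)$, by the Helffer--Sj\"ostrand formula we may write, for each $\varepsilon \ge 0$,
\begin{equation*}
	P_D^{(\varepsilon)} = \chi_D\!\left( \mL^{(\varepsilon)} \right) = \frac{1}{\pi} \int_\cN \bar{\partial}\tilde{\chi}_D(z) \left( z - \mL^{(\varepsilon)} \right)^{-1} \md x\, \md y,
\end{equation*}
where $\tilde{\chi}_D$ is an almost-analytic extension of $\chi_D$ whose support is contained in a small complex neighborhood of $\supp \chi_D$, and $\bar\partial\tilde\chi_D$ vanishes to infinite order on the real axis. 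Subtracting the $\varepsilon=0$ expression and applying the resolvent identity $\left( z - \mL^{(\varepsilon)} \right)^{-1} - \left( z - \mL^{(0)} \right)^{-1} = \varepsilon \left( z - \mL^{(\varepsilon)} \right)^{-1} \mL_R \left( z - \mL^{(0)} \right)^{-1}$ gives
\begin{equation*}
	P_D^{(\varepsilon)} - P_D^{(0)} = \frac{\varepsilon}{\pi} \int_\cN \bar{\partial}\tilde{\chi}_D(z) \left( z - \mL^{(\varepsilon)} \right)^{-1} \mL_R \left( z - \mL^{(0)} \right)^{-1} \md x\, \md y.
\end{equation*}

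The crux is then to bound the integrand in operator norm, uniformly in $z \in \supp\bar\partial\tilde\chi_D$ and in small $\varepsilon$. The difficulty is that $\mL_R$ is a second-order differential operator, so $\mL_R \left( z - \mL^{(0)} \right)^{-1}$ is only bounded on $L^2$ thanks to elliptic regularity: indeed $\left( z - \mL^{(0)} \right)^{-1}$ maps $L^2$ into $H^2$ with norm controlled by $\langle z\rangle / |\Im z|$ (or a similar power), and $\mL_R$ maps $H^2 \to L^2$ boundedly by \Cref{AsymptoticExpansionOfTheOperator} with $\varepsilon$-independent bounds, giving $\left\| \mL_R \left( z - \mL^{(0)} \right)^{-1} \right\|_{L^2\to L^2} \lesssim \langle z\rangle^2/|\Im z|^2$ say. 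The remaining factor $\left( z - \mL^{(\varepsilon)} \right)^{-1}$ contributes another $1/|\Im z|$. Since $\bar\partial\tilde\chi_D$ vanishes to infinite order as $\Im z \to 0$, one chooses the almost-analytic extension so that $|\bar\partial\tilde\chi_D(z)| \lesssim_M |\Im z|^M$ for any $M$; taking $M$ large enough (e.g. $M = 4$) makes the $z$-integral absolutely convergent, and the resulting bound is $\varepsilon$ times a finite constant depending only on $\chi_D$ and the $\varepsilon$-uniform coefficient bounds of $\mL_R$. This yields the claim with $C$ independent of $\varepsilon$.

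The main obstacle I anticipate is organizing the uniformity carefully: one must verify that the resolvent bounds for $\mL^{(\varepsilon)}$ hold with constants independent of $\varepsilon$ for all sufficiently small $\varepsilon$, which follows from self-adjointness (hence $\left\|\left(z - \mL^{(\varepsilon)}\right)^{-1}\right\| \le 1/|\Im z|$ automatically) combined with \Cref{UniformEllipticRegularity} to upgrade to the $H^2 \to L^2$ mapping property of $\mL_R\left(z-\mL^{(\varepsilon)}\right)^{-1}$ with uniform constants. An alternative route, avoiding Helffer--Sj\"ostrand, would be to note that $\chi_D$ extends to a function holomorphic on a complex neighborhood of a compact interval containing $\supp\chi_D$ and to use a genuine Riesz-type contour integral $P_D^{(\varepsilon)} = \frac{1}{2\pi\mi}\oint_\Gamma \chi_D(z)\left(z - \mL^{(\varepsilon)}\right)^{-1}\md z$ over a fixed rectangle $\Gamma$ enclosing $[E_D - 2\delta, E_D+2\delta]$ and staying at fixed positive distance from the rest of $\sigma\left(\mL^{(0)}\right)$ near $E_D$ (using \Cref{EnergyLevelNearED}); on $\Gamma$ the resolvents are uniformly bounded and the resolvent identity again produces the factor $\varepsilon$. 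Either way the structure is the same: resolvent identity to extract $\varepsilon$, elliptic regularity to absorb the second-order $\mL_R$, and a fixed contour (or infinite-order vanishing of $\bar\partial\tilde\chi_D$) to make everything $\varepsilon$-uniform.
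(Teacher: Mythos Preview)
Your proposal is correct and follows essentially the same approach as the paper: the paper likewise writes $P_D^{(\varepsilon)}-P_D^{(0)}$ via the Helffer--Sj\"ostrand formula and then bounds the resolvent difference by $C\varepsilon |\Im z|^{-1}\big((|z|+1)/|\Im z|+1\big)$ using the second resolvent identity together with self-adjointness and the uniform elliptic regularity of \Cref{UniformEllipticRegularity}, exactly as you outline. (One caveat on your alternative route: since $\chi_D\in C_c^\infty$ is not holomorphic, a genuine Cauchy contour representation $\frac{1}{2\pi\mi}\oint_\Gamma \chi_D(z)(z-\mL^{(\varepsilon)})^{-1}\,\md z$ is not available, so the Helffer--Sj\"ostrand route is the correct one here.)
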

Using \Cref{ConvergenceOfSpectralProjections} we have
\begin{equation*}
	P_D^{(\varepsilon)}F_1(\varepsilon s) = P_D^{(0)}F_1(\varepsilon s) + F_3(\varepsilon s),
\end{equation*}
for some $\|F_3(\varepsilon s)\|_{L^2} \lesssim \varepsilon \|F_1(\varepsilon s)\|_{L^2} \lesssim \varepsilon^2 \left( \| \boldsymbol{\alpha}(\varepsilon s)\|_{H^1} + \| \partial_T \boldsymbol{\alpha}(\varepsilon s)\|_{L^2}\right) ,\forall s \ge 0$. So, only controlling the term $P_D^{(0)}F_1(\varepsilon s)$ is left to us.
~\\

\noindent \textit{Proof of \Cref{ConvergenceOfSpectralProjections}.} This is a result of the convergence $\mL^{(\varepsilon)} \to \mL^{(0)}$ in the norm-resolvent sense \cite{reed1978i}, which is summarized in the following lemma. 
\begin{lemma}
\label{ResolventControl}
	For sufficiently small $\varepsilon>0$, $\mL^{(\varepsilon)}$ converges to $\mL^{(0)}$ in norm-resolvent sense. Moreover, the following bound holds for all $z \in \cN\backslash \rN$:
	\begin{equation}
	\label{ResolventBound}
		\left \| (z - \mL^{(\varepsilon)})^{-1} -  (z - \mL^{(0)})^{-1} \right \|_{L^2 \to L^2} \le C \frac{1}{|\Im{z}|}\left( \frac{|z|+1 }{|\Im{z}|}+1 \right) \varepsilon ,
	\end{equation}
	where $C>0$ is a constant independent of $z$ and $\varepsilon$.
\end{lemma}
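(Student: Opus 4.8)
The plan is to reduce the statement to the second resolvent identity, combined with the structural decomposition $\mL^{(\varepsilon)} = \mL^{(0)} + \varepsilon \mL_R$ of \Cref{AsymptoticExpansionOfTheOperator} and the elliptic regularity of \Cref{UniformEllipticRegularity}. Fix $z \in \cN\backslash\rN$. For $\varepsilon$ small enough, both $\mL^{(0)}$ and $\mL^{(\varepsilon)}$ are self-adjoint with the common domain $H^2(\rN^2)$, so $z$ belongs to the resolvent set of both, and $\mL^{(\varepsilon)} - \mL^{(0)} = \varepsilon \mL_R$ is a bounded operator defined on $H^2$. The second resolvent identity then gives
$$(z-\mL^{(\varepsilon)})^{-1} - (z-\mL^{(0)})^{-1} = \varepsilon\,(z-\mL^{(\varepsilon)})^{-1}\,\mL_R\,(z-\mL^{(0)})^{-1},$$
and it remains to estimate the three factors on the right.

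I would bound them as follows. By self-adjointness and the spectral theorem, $\|(z-\mL^{(\varepsilon)})^{-1}\|_{L^2\to L^2}\le |\Im z|^{-1}$, and likewise $\|(z-\mL^{(0)})^{-1}\|_{L^2\to L^2}\le |\Im z|^{-1}$, while $\|\mL^{(0)}(z-\mL^{(0)})^{-1}\|_{L^2\to L^2} = \sup_{\lambda\in\sigma(\mL^{(0)})}|\lambda(z-\lambda)^{-1}| \le 1+|z|\,|\Im z|^{-1}$. Plugging these into the $\varepsilon = 0$ case of \Cref{UniformEllipticRegularity}, namely $\|g\|_{H^2}\lesssim \|\mL^{(0)}g\|_{L^2}+\|g\|_{L^2}$, gives
$$\|(z-\mL^{(0)})^{-1}\|_{L^2\to H^2}\lesssim 1+\frac{|z|}{|\Im z|}+\frac{1}{|\Im z|}\lesssim \frac{|z|+1}{|\Im z|}+1.$$
Finally, \Cref{AsymptoticExpansionOfTheOperator} tells us that $\mL_R$ is a second-order differential operator whose coefficients lie in $C_b^\infty$ with derivative bounds independent of $\varepsilon$ — in particular $\|U(\varepsilon\,\cdot)\|_{L^\infty}=\|U\|_{L^\infty}$ — so $\|\mL_R\|_{H^2\to L^2}\le C$ uniformly in small $\varepsilon$. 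Multiplying the three estimates produces exactly the bound (\ref{ResolventBound}); taking, say, $z=\mi$ then makes the right-hand side $\le C'\varepsilon\to 0$, which is the asserted norm-resolvent convergence.

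I do not expect a genuine obstacle here: the argument is essentially routine. The only step requiring a little care is the uniform-in-$\varepsilon$ bound $\|\mL_R\|_{H^2\to L^2}\le C$, which is precisely the content one extracts from \Cref{AsymptoticExpansionOfTheOperator} — the coefficients of $\mL_R$ are assembled from the fixed honeycomb data $A$ and from $U$, and rescaling the argument of $U$ does not enlarge the sup-norms of $U$ or of its derivatives. Everything else — the resolvent identity, the $|\Im z|^{-1}$ operator bounds for self-adjoint operators, and standard elliptic regularity — is classical, which is why this lemma, although it is the cornerstone of the paper's strategy of working directly with the perturbed operator, admits such a short proof.
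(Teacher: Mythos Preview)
Your proof is correct and is essentially the same argument as the paper's: both amount to the second resolvent identity combined with the self-adjoint resolvent bound $|\Im z|^{-1}$, elliptic regularity, and the uniform $H^2\to L^2$ bound on $\mL_R$. The only cosmetic difference is the ordering of the factors --- the paper writes $(z-\mL^{(0)})^{-1}\mL_R(z-\mL^{(\varepsilon)})^{-1}$ and therefore invokes the uniform-in-$\varepsilon$ elliptic regularity of \Cref{UniformEllipticRegularity} for $\mL^{(\varepsilon)}$, whereas your ordering only needs the $\varepsilon=0$ case; either way works and yields the same bound.
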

\noindent Another key ingredient is the following \textbf{Helffer-Sj\"ostrand formula}
\begin{equation}
\label{HSFormula}
	P_D^{(\varepsilon)} = \frac{1}{\pi} \int_{\cN} \bar{\partial}\tilde{\chi}_D(z)\left(z - \mL^{(\varepsilon)}\right)^{-1} \md z,
\end{equation}
where $\bar{\partial}$ is the Cauchy-Riemann differential operator and $\tilde{\chi}_D$ is a so-called \textbf{almost-analytic continuation} of $\chi_D$, satisfying the following properties
\begin{equation}
\label{DefOfAlmostAnalyticContinuation}
\begin{gathered}
	\tilde{\chi}_D \in C_c^\infty(\cN),\quad \tilde{\chi}_D|_{\rN}=\chi_D, \\
	\left| \bar{\partial}\tilde{\chi}_D(z) \right| \lesssim_N |\Im{z}|^N,\quad \forall N \ge 0.	
\end{gathered}
\end{equation}
For the existence of an almost analytic extension and a proof of this formula, we refer to \cite{dimassi1999spectral,zworski2012semiclassical}.

To prove the proposition, we first employ \eqref{HSFormula} to write
	\begin{equation*}
		P_D^{(\varepsilon)} - P_D^{(0)} = \frac{1}{\pi} \int_\cN \bar{\partial}\tilde{\chi}_D(z)\left[\left(z - \mL^{(\varepsilon)}\right)^{-1} - \left(z - \mL^{(0)}\right)^{-1} \right] \md z.
	\end{equation*}
	Bound both sides by operator norms and substitute \eqref{ResolventBound}. Then, the right hand side is left with a $\varepsilon$ multiplied by a remaining integral, which is absolutely convergent in view of \eqref{DefOfAlmostAnalyticContinuation}.\pd
	
\noindent \textit{Proof of \Cref{ResolventControl}.} To estimate $\left(z - \mL^{(\varepsilon)} \right)^{-1} -  \left(z - \mL^{(0)}\right)^{-1}$, we need to compare the solution to
\begin{equation*}
	zu - \mL^{(0)}u = F
\end{equation*}
with the one to
\begin{equation}
\label{vResolvent}
	zv - \mL^{(\varepsilon)}v = F
\end{equation}
for $z \in \cN \backslash \rN$.
Taking a difference of the two equations, we have
\begin{equation*}
	(z - \mL^{(0)})(v-u) = \varepsilon \mL_R v,
\end{equation*}
where $\varepsilon\mL_R = \mL^{(\varepsilon)}- \mL^{(0)}$ and hence by estimate of resolvents for self-adjoint operator
\begin{equation}
	\| v- u \|_{L^2} \le \frac{\varepsilon}{|\Im{z}|}\| \mL_R v \|_{L^2} \lesssim \frac{\varepsilon}{|\Im{z}|}\| v \|_{H^2} .
\end{equation}
Then we bound $\| v\|_{H^2}$ by $\|F\|_{L^2}$ through \eqref{vResolvent}. By the uniform elliptic regularity (\Cref{UniformEllipticRegularity}) :
\begin{equation*}
	\| v \|_{H^2} \lesssim \left \| \mL^{(\varepsilon)}v \right\|_{L^2} + \|v\|_{L^2} \le (|z|+1)\|v\|_{L^2} + \|F\|_{L^2},
\end{equation*}
so it suffices to bound $\|v\|_{L^2}$. But this follows again from the estimate of resolvents for self-adjoint operators:
\begin{equation*}
	\| v \|_{L^2} \le \frac{1}{|\Im{z}|}\|F\|_{L^2}.
\end{equation*}

A simpler way to bound $\|v\|_{H^2}$ is symbolic calculus. Indeed, we can show that $\left(z-\mL^{(\varepsilon)}\right)^{-1}$ are pseudo-differential operators of order $-2$, with a uniform upper bound provided that $z$ belongs to some compact subset of $\varrho(\mL^{(\varepsilon)})$. See \cite{zworski2012semiclassical}. \pd

The estimate of $P_D^{(0)}F_1$ will take over the next section. We first state the result that will be obtained there.
\begin{proposition}
\label{DiracPartBound}
If $\boldsymbol{\alpha}$ satisfies the Dirac equation, then for any $\nu>0$, there exists a constant $C_\nu>0$ making the followings hold for all sufficiently small $\varepsilon > 0$ and $s \ge 0$:
	\begin{equation}
		\left \| P_D^{(0)}F_1(\varepsilon s) \right \|_{L^2(\rN^2)}^2 \le C_\nu \varepsilon^4 \left( \| \boldsymbol{\alpha}(\varepsilon s) \|_{H^{4}_{1+\nu}(\rN^2)}^2 + \| \partial_T\boldsymbol{\alpha}(\varepsilon s) \|_{H^{3}_{1+\nu}(\rN^2)}^2 \right).
	\end{equation}
	Hence, back to the expression of $G_D$, we have
	\begin{equation}
	\label{GDBound}
		\left \| G_D(t) \right \|_{L^2(\rN^2)}^2 \le C_\nu \varepsilon^4t^2 \sup_{s \in [0,t]} \left( \| \boldsymbol{\alpha}(\varepsilon s) \|_{H^{4}_{1+\nu}(\rN^2)}^2 + \| \partial_T\boldsymbol{\alpha}(\varepsilon s) \|_{H^{3}_{1+\nu}(\rN^2)}^2 \right).
	\end{equation}
\end{proposition}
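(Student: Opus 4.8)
The plan is to compute $P_D^{(0)}F_1$ through the Floquet--Bloch decomposition of the \emph{unperturbed} operator $\mL^{(0)}$, to isolate its leading amplitude in a neighbourhood of the Dirac cone, and to recognize that this amplitude is exactly the defect of the Dirac system (\ref{DiracEquation}) --- hence identically zero --- so that what remains is one power of $\varepsilon$ smaller than the crude bound $\|F_1(\varepsilon s)\|_{L^2}\lesssim\varepsilon$. Throughout, the slow time $T=\varepsilon s$ is a frozen parameter. It is convenient to write $F_1=\varepsilon^2 G$ with $G(\by,T)=\sum_k w_k(\varepsilon\by,T)\,\Theta_k(\by)$ (the natural contractions of (\ref{ExpressionOfF1}) being understood), where the fixed $\bK$-quasiperiodic profiles are $\Theta_k\in\{\Phi_k,\mathcal{A}\Phi_k,\mathfrak{A}\Phi_k\}$ and the slow amplitudes are $w_k\in\{-2\mi\sqrt{E_D}\partial_T\alpha_k,\ \mi\nabla_\bY\alpha_k,\ -\alpha_k U\}$; note that each $w_k$ carries at most one derivative of $\boldsymbol{\alpha}$.

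First I would split the Floquet integral. Writing $P_D^{(0)}G=|\mathbb{B}_h|^{-1}\sum_b\int_{\mathbb{B}_h}\chi_D(E_b(\bk))\langle\Phi_b(\bk),G\rangle_{L^2}\Phi_b(\bk)\,\md\bk$ and using \Cref{EnergyLevelNearED}(1), on the support of $\chi_D\circ E_b$ one has either $b=\pm$, or $b\le B$ with $\bk$ at distance $\ge k_0$ from $\bK$ --- hence, by $\Lambda^*$-periodicity, from each vertex of the $C_3$-orbit of $\bK$. Since moreover $E_\pm(\bK+\boldsymbol{\kappa})=E_D\pm\nu_{_F}|\boldsymbol{\kappa}|(1+o(1))$, so that $\chi_D(E_\pm)\equiv1$ for $|\boldsymbol{\kappa}|<k_0$ once $k_0$ is small, this gives $P_D^{(0)}G=P_{\mathrm{cone}}+P_{\mathrm{far}}$, where $P_{\mathrm{cone}}$ collects $b=\pm$ over the $k_0$-balls about the three vertices of $\bK$'s orbit (which fold, via the $\Lambda^*$-translations relating those vertices, onto a single ball $\{|\boldsymbol{\kappa}|<k_0\}$), and $P_{\mathrm{far}}$ runs over $b\le B$ with $\bk$ bounded away from $\bK+\Lambda^*$. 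For $P_{\mathrm{far}}$ I would exploit non-concentration of the Floquet content of a slow modulation: since $\overline{\Phi_b(\bk)}\Theta_k$ is $(\bK-\bk)$-quasiperiodic, expanding it in a Fourier series and substituting $\bY=\varepsilon\by$ gives $\langle\Phi_b(\bk),w_k(\varepsilon\,\cdot\,)\Theta_k\rangle_{L^2}=\varepsilon^{-2}\sum_{\bm\in\Lambda^*}c_{b,k,\bm}(\bk)\,\widehat{w_k}\big((\bk-\bK-\bm)/\varepsilon\big)$, a lattice sum of Fourier-transform values at arguments of size $\gtrsim k_0/\varepsilon$ on the far region. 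Here the weighted norm enters through the embedding $H^M_{1+\nu}\hookrightarrow W^{M,1}(\rN^2)$ (valid for every $\nu>0$ since $\langle\bY\rangle^{-1-\nu}\in L^2(\rN^2)$ in two dimensions), which yields $|\widehat{w_k}(\boldsymbol{\zeta})|\lesssim_M\langle\boldsymbol{\zeta}\rangle^{-M}\|w_k\|_{W^{M,1}}$; with $M=3$ one gets $\|P_{\mathrm{far}}\|_{L^2}^2\lesssim_\nu\varepsilon^{2}\sum_k\|w_k\|_{W^{3,1}}^2$, and after the factor $\varepsilon^4$ from $F_1=\varepsilon^2 G$ this lies well inside the claimed bound.

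The substance is $P_{\mathrm{cone}}$. On $|\boldsymbol{\kappa}|<k_0$ I would insert (\ref{PhikNearK}), $\Phi_\pm(\bK+\boldsymbol{\kappa})=\tfrac{1}{\sqrt{2}}\me^{\mi\boldsymbol{\kappa}\cdot\by}\big(\zeta(\boldsymbol{\kappa})\Phi_1\pm\Phi_2+\Phi_{R,\pm}(\boldsymbol{\kappa})\big)$, $\zeta(\boldsymbol{\kappa})=(\kappa_1+\mi\kappa_2)/|\boldsymbol{\kappa}|$, $\|\Phi_{R,\pm}(\boldsymbol{\kappa})\|_{H^2_\bK}\lesssim|\boldsymbol{\kappa}|$, into $\langle\Phi_\pm(\bK+\boldsymbol{\kappa}),G\rangle_{L^2}$. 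Keeping the $\bm=0$ Fourier mode and discarding $\Phi_{R,\pm}$ produces the amplitude $\tfrac{\mathrm{const}}{\varepsilon^2}\big(\overline{\zeta(\boldsymbol{\kappa})}\,\widehat{Q_1}(\boldsymbol{\kappa}/\varepsilon)\pm\widehat{Q_2}(\boldsymbol{\kappa}/\varepsilon)\big)$, where $Q_j(\bY,T):=\sum_k\langle\Phi_j,\Theta_k\rangle_{L^2_\bK}w_k(\bY,T)$. Evaluating these inner products by the bifurcation identities of \Cref{MathcalAMatrix} and \Cref{MathfrakAMatrix} shows that $Q_j$ is precisely the difference of the right- and left-hand sides of the $j$-th line of (\ref{DiracEquation}); since $\boldsymbol{\alpha}$ solves that system, $Q_1\equiv Q_2\equiv0$ and the leading amplitude vanishes identically. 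What then survives in $\langle\Phi_\pm(\bK+\boldsymbol{\kappa}),G\rangle_{L^2}$ is (i) the $\Phi_{R,\pm}$-term, of size $\lesssim\varepsilon^{-2}|\boldsymbol{\kappa}|\sum_k|\widehat{w_k}(\boldsymbol{\kappa}/\varepsilon)|$ via $\|\Phi_{R,\pm}(\boldsymbol{\kappa})\|_{L^2_\bK}\lesssim|\boldsymbol{\kappa}|$, and (ii) the $\bm\ne0$ terms, handled exactly as for $P_{\mathrm{far}}$. By Parseval for the Bloch transform, $\|P_{\mathrm{cone}}\|_{L^2}^2$ is controlled by $\varepsilon^{-4}\int_{|\boldsymbol{\kappa}|<k_0}|\boldsymbol{\kappa}|^2\big(\sum_k|\widehat{w_k}(\boldsymbol{\kappa}/\varepsilon)|\big)^2\md\boldsymbol{\kappa}$ plus a $P_{\mathrm{far}}$-type term; rescaling $\boldsymbol{\kappa}=\varepsilon\boldsymbol{\lambda}$ turns the first integral into $\sum_k\|\nabla_\bY w_k\|_{L^2}^2$, so $\|P_{\mathrm{cone}}\|_{L^2}^2\lesssim\sum_k\|\nabla_\bY w_k\|_{L^2}^2+\varepsilon^2\sum_k\|w_k\|_{W^{3,1}}^2$. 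Since each $w_k$ carries at most one derivative of $\boldsymbol{\alpha}$, the right side is $\lesssim_\nu\|\boldsymbol{\alpha}(T)\|_{H^4_{1+\nu}}^2+\|\partial_T\boldsymbol{\alpha}(T)\|_{H^3_{1+\nu}}^2$, and multiplying $\|P_D^{(0)}G\|_{L^2}^2$ by $\varepsilon^4$ gives the first assertion.

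The bound on $G_D$ then follows at once: on $\Ran(P_D^{(\varepsilon)})$ the operator $S=\sqrt{\mL^{(\varepsilon)}}$ takes values in the compact interval $[\sqrt{E_D-2\delta},\sqrt{E_D+2\delta}]$, hence $\big\|\tfrac{\me^{\mi S\tau}-\me^{-\mi S\tau}}{2\mi S}P_D^{(\varepsilon)}\big\|_{L^2\to L^2}\lesssim1$ uniformly in $\tau,\varepsilon$, so $\|G_D(t)\|_{L^2}\lesssim t\sup_{s\le t}\|P_D^{(\varepsilon)}F_1(\varepsilon s)\|_{L^2}$; writing $P_D^{(\varepsilon)}F_1=P_D^{(0)}F_1+(P_D^{(\varepsilon)}-P_D^{(0)})F_1$ and invoking \Cref{ConvergenceOfSpectralProjections} together with $\|F_1(\varepsilon s)\|_{L^2}\lesssim\varepsilon$ absorbs the second piece into an $O(\varepsilon^2)$ error, and (\ref{GDBound}) follows. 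I expect the main obstacle to be the bookkeeping inside the cone step: checking that the only $\varepsilon^{-2}$-sized part of $\langle\Phi_\pm(\bK+\boldsymbol{\kappa}),G\rangle_{L^2}$ comes from the $\Phi_1,\Phi_2$-projection at $\bk=\bK$ (everything else being off-resonance and annihilated by $\chi_D$, or far-field, or $O(|\boldsymbol{\kappa}|)$ through $\Phi_{R,\pm}$), matching that projection with the Dirac defect via \Cref{MathcalAMatrix} and \Cref{MathfrakAMatrix}, and correctly folding the three cone vertices --- all while tracking the $\varepsilon^{-2}$, $|\boldsymbol{\kappa}|$ and $\widehat{w_k}(\boldsymbol{\kappa}/\varepsilon)$ factors so that precisely one extra power of $\varepsilon$ is gained.
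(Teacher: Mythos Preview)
Your proposal is correct and follows essentially the same route as the paper: Floquet--Bloch expansion of $P_D^{(0)}F_1$, a near/far split governed by \Cref{EnergyLevelNearED}, the expansion (\ref{PhikNearK}) in the cone, the cancellation of the leading amplitude via \Cref{MathcalAMatrix}, \Cref{MathfrakAMatrix} and the Dirac equation, and the Poisson-summation/Fourier-decay estimates of \Cref{FarMomentaDecay} for the off-cone and $\mathbf{m}\ne\mathbf{0}$ pieces. The passage from the bound on $P_D^{(0)}F_1$ to (\ref{GDBound}) via \Cref{ConvergenceOfSpectralProjections} and the trivial bound on $F_1$ is also identical to the paper's.

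The one minor difference is how the cone remainder is integrated. The paper further subdivides the cone into $\{|\boldsymbol{\kappa}|<\varepsilon\}$ and $\{\varepsilon\le|\boldsymbol{\kappa}|<k_0\}$, bounding the $\Phi_{R,\pm}$-contribution in the first shell by $|\boldsymbol{\kappa}|\,\|\boldsymbol{\Gamma}\|_{L^1}\lesssim\varepsilon\|\boldsymbol{\Gamma}\|_{L^1}$ and in the second by $|\boldsymbol{\kappa}|\,|\widehat{\boldsymbol{\Gamma}}(\boldsymbol{\kappa}/\varepsilon)|\lesssim\varepsilon^N|\boldsymbol{\kappa}|^{1-N}$, and then integrating each separately. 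You avoid this split and instead keep the pointwise bound $\varepsilon^{-2}|\boldsymbol{\kappa}|\,|\widehat{w_k}(\boldsymbol{\kappa}/\varepsilon)|$ on the whole cone and rescale $\boldsymbol{\kappa}=\varepsilon\boldsymbol{\lambda}$, which collapses the integral to $\|\nabla_\bY w_k\|_{L^2}^2$ via Plancherel. This is a small but genuine streamlining: it removes one case distinction and gives a slightly sharper norm on the cone piece ($H^2$ in $\boldsymbol{\alpha}$ rather than $W^{4,1}$), with the weighted $H^4_{1+\nu}$ only entering through the far/$\mathbf{m}\ne\mathbf{0}$ terms.
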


\subsection{Sobolev norms estimate for the high energy part}
Combining \eqref{GLBound}, \eqref{GHBound} and \eqref{GDBound} we have obtained the desired estimate for $G_1$, in $L^2$-norm. In this subsection, we generalize it to $H^s$-norm estimate. This has already been done for $G_0$ and $G_2$ in \eqref{G0HsBound} and \eqref{G2HsBound}, so we only need to focus on the estimate of $\| G_1 \|_{H^s}$.

In view of \Cref{UniformEllipticRegularity}, to bound $\| G_1(t) \|_{H^{2N}}$, it suffices to do with $\| \left(\mL^{(\varepsilon)}\right)^NG_1(t)\|_{L^2}$ (and with $\|G_1(t)\|_{L^2}$, which is already done), where $2N$ is an even number greater than $s$. Again, this need to be decomposed into the prescribed three parts
\begin{equation*}
	\left\| \left(\mL^{(\varepsilon)}\right)^NG_1(t)\right\|_{L^2}^2 \lesssim \sum_{J=L,D,H} \left\| \left(\mL^{(\varepsilon)}\right)^NG_J(t)\right\|_{L^2}^2.
\end{equation*}
We emphasize that, for low- and middle-parts, the differential operators localized to finite parts of its spectrum are, in fact, $L^2 \to L^2$ bounded. Hence, for $J=L$ and $J=D$ we have
\begin{equation}
\label{LowAndMiddle2NG1}
	\left\| \left(\mL^{(\varepsilon)}\right)^NG_L(t)\right\|_{L^2}^2 \lesssim \|G_L(t)\|_{L^2}^2, \quad  \left\| \left(\mL^{(\varepsilon)}\right)^NG_D(t)\right\|_{L^2}^2 \lesssim \|G_D(t)\|_{L^2}^2,
\end{equation}
and they had been appropriately controlled through \eqref{GLBound} and \eqref{GDBound}.

It remains to control
\begin{equation*}
\begin{aligned}
		-  \left(  \mL^{(\varepsilon)}  \right)^N  G_H(t)  &=  \left(  \mL^{(\varepsilon)} \right)^N  S^{-1} \left(  \sqrt{E_D}\Id  - \sigma S   \right)^{-1} \Bigg[\sum_{\sigma=\pm} \frac{\me^{\mi\sigma S t}}{2} \\
		 	&\quad \times \Bigg(\me^{\mi \left(\sqrt{E_D} \Id- \sigma S\right)t} P_H^\bracketeps F_1(\varepsilon t) - P_H^\bracketeps F_1(0) \\ 
		 	&\qquad + \varepsilon \int_0^t \me^{\mi \left(\sqrt{E_D} \Id- \sigma S\right)s} P_H^\bracketeps \partial_T F_1(\varepsilon s) \md s\Bigg) \Bigg],
	\end{aligned}	
\end{equation*}
where the expression \eqref{GHIntegratedByParts} is used. This is done by directly taking $L^2$-norms on both sides. Notice that in $\mathrm{Ran}\left( P_H^\bracketeps \right)$, by symbolic calculus,
\begin{equation*}
	\left \| \me^{\mi\sigma S t} \left(\mL^{(\varepsilon)}\right)^NS^{-1}\left( \sqrt{E_D}\Id - \sigma S  \right)^{-1} \me^{\mi \left(\sqrt{E_D} \Id- \sigma S\right)s} \right\|_{H^{2N-2} \to L^2} \lesssim 1,
\end{equation*}
which yields
\begin{equation*}
	\left \| \left(\mL^{(\varepsilon)}\right)^N G_H(t) \right\|^2_{L^2} \lesssim (1 + \varepsilon^2 t^2) \sup_{s \in [0,t]} \left( \left\|  F_1(\varepsilon s) \right\|_{H^{2N-2}}^2 + \left\| \partial_T F_1(\varepsilon s) \right\|_{H^{2N-2}}^2\right).
\end{equation*}

Finally, back to \eqref{ExpressionOfF1} we have
\begin{equation}
\label{ControlOfHighEnergySpectralIntegral}
\begin{aligned}
	\left \| \left( \mL^{(\varepsilon)}\right)^N G_H(t) \right\|^2_{L^2} &\lesssim  \left( \varepsilon^2 +\varepsilon^4t^2 \right) \sup_{s \in [0,t]} \Big( \|\boldsymbol{\alpha}(\varepsilon s) \|_{H^{2N-1}}^2 \\  
		& \qquad + \|\partial_T\boldsymbol{\alpha}(\varepsilon s)\|_{H^{2N-1}} ^2 + \|\partial^2_T\boldsymbol{\alpha}(\varepsilon s)\|_{H^{2N-2}} ^2 \Big).
\end{aligned}
\end{equation}
Combining \eqref{GLBound}, \eqref{GDBound}, \eqref{LowAndMiddle2NG1} and \eqref{ControlOfHighEnergySpectralIntegral}, we conclude the results of \Cref{G1H2NBound}.

\section{Control of the Resonance}
\label{SecCR}
It remains to prove \Cref{DiracPartBound}. The problem is now reduced to the domain of the unperturbed operator. Since the spectral structure of $\mL^{(0)}$ is well-known, we have the following decomposition for $P_D^{(0)}F_1(\varepsilon s)$.

\begin{proposition}
\label{FunctionalCalculusOfFBDecomposition}
	For sufficiently small $\varepsilon>0$:
	\begin{equation}
	\label{FBDecompositionOfPDF1}
		P_D^{(0)}F_1(\by,\varepsilon s) = \frac{1}{|\mathbb{B}_h|} \sum_{b \ge 1}^{\infty} \int_{\mathbb{B}_h} \chi_D(E_b(\bk))\Phi_b(\by;\bk)\langle \Phi_b(\bk),F_1(\varepsilon s)\rangle_{L^2(\rN^2)} \md \bk,
	\end{equation}
	in sense of $L^2$-convergence.
\end{proposition}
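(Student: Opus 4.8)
The plan is to reduce the proposition to the compatibility of the bounded Borel functional calculus of $\mL^{(0)}$ with the Floquet--Bloch direct integral: on each fibre this calculus is the functional calculus of $H(\bk)$, which on the discrete spectrum acts by multiplication by $\chi_D(E_b(\bk))$. First I would note that the right-hand side of (\ref{ExpressionOfF1}) is a finite linear combination of products of the smooth bounded functions $\Phi_k$, $\mathcal{A}\Phi_k$ and $\operatorname{Tr}(U(\varepsilon\,\cdot)\mathfrak{A}\Phi_k)$ with the rescaled functions $\alpha_k(\varepsilon\,\cdot,\varepsilon s)$, $\partial_T\alpha_k(\varepsilon\,\cdot,\varepsilon s)$ and $\partial_{Y_j}\alpha_k(\varepsilon\,\cdot,\varepsilon s)$, which lie in $L^2(\rN^2)$ by the regularity statement of \Cref{RegularityOfTheDiracEquation}; hence $F_1(\varepsilon s)\in L^2(\rN^2)$. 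Since $\chi_D\in C_c^\infty(\rN)$, the operator $P_D^{(0)}=\chi_D(\mL^{(0)})$ is bounded and self-adjoint on $L^2(\rN^2)$, with $\|P_D^{(0)}\|\le\|\chi_D\|_\infty\le 1$; in particular it is continuous and may be applied term by term to an $L^2$-convergent series.

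Next I would pass through the Bloch transform $\mathcal{B}$ of (\ref{BlochInverse}), which conjugates $\mL^{(0)}$ into the decomposable operator $\int^{\oplus}_{\mathbb{B}_h} H(\bk)\,\md\bk$, each $H(\bk)$ being self-adjoint on $L^2_\bk$ with compact resolvent and $H(\bk)\Phi_b(\bk)=E_b(\bk)\Phi_b(\bk)$. By the functional calculus for decomposable operators (see \cite{reed1978i}), $\mathcal{B}$ also conjugates $P_D^{(0)}$ into $\int^{\oplus}_{\mathbb{B}_h}\chi_D(H(\bk))\,\md\bk$, so that for $f=F_1(\varepsilon s)$ one has $(\mathcal{B}\,P_D^{(0)}f)(\bk)=\chi_D(H(\bk))(\mathcal{B}f(\bk))$ for a.e.\ $\bk\in\mathbb{B}_h$. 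Expanding $\mathcal{B}f(\bk)=\sum_{b\ge1}\langle\Phi_b(\bk),\mathcal{B}f(\bk)\rangle_{L^2_\bk}\Phi_b(\bk)$ in the orthonormal eigenbasis of $H(\bk)$, and using that a bounded Borel function of an operator with discrete spectrum acts by $\chi_D(E_b(\bk))$ on the eigenline of $\Phi_b(\bk)$, one gets $\chi_D(H(\bk))(\mathcal{B}f(\bk))=\sum_{b\ge1}\chi_D(E_b(\bk))\langle\Phi_b(\bk),\mathcal{B}f(\bk)\rangle_{L^2_\bk}\Phi_b(\bk)$. Applying $\mathcal{B}^{-1}$ through (\ref{BlochInverse}) and rewriting the fibre inner product as $|\mathbb{B}_h|^{-1/2}\langle\Phi_b(\bk),f\rangle_{L^2(\rN^2)}$ via (\ref{AbbreviationOfInnerProductAgainstPhi}) yields exactly (\ref{FBDecompositionOfPDF1}).

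Two points then deserve a word. Measurability of $\bk\mapsto\chi_D(E_b(\bk))$, needed to make sense of the $\bk$-integrals, is immediate since $\chi_D$ is continuous and each $E_b$ is Lipschitz continuous by \Cref{LipschitzContinuity}. The $L^2(\rN^2)$-convergence of the series in (\ref{FBDecompositionOfPDF1}) follows from the Parseval identity for $\mathcal{B}$ together with $|\chi_D|\le 1$: the tail of the $\bk$-integrated, $b$-summed series is dominated in $L^2$ by that of the (convergent) Floquet--Bloch series of $F_1(\varepsilon s)$ itself, so it is Cauchy. I expect the only genuinely delicate point to be the decomposable-operator step of the second paragraph --- namely justifying that the Borel functional calculus commutes with $\mathcal{B}$ and restricts on each fibre to $\chi_D(H(\bk))$ acting diagonally on the Bloch modes --- while everything else is routine bookkeeping with the Bloch transform.
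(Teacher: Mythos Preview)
Your proposal is correct and follows essentially the same route as the paper: conjugate by the Bloch transform so that $\mL^{(0)}$ becomes the decomposable operator with fibres $H(\bk)$, invoke the functional calculus on each fibre to see that $P_D^{(0)}$ acts as multiplication by $\chi_D(E_b(\bk))$ on the Bloch coefficients, and then pull back via (\ref{BlochInverse}) and (\ref{AbbreviationOfInnerProductAgainstPhi}). Your write-up is in fact more careful than the paper's short proof in that you explicitly check $F_1(\varepsilon s)\in L^2$, the measurability of $\bk\mapsto\chi_D(E_b(\bk))$, and the $L^2$-convergence of the series, all of which the paper leaves implicit.
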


\begin{proof}
	Denote by $\mathcal{B}:L^2(\rN^2) \to L^2(\mathbb{B}_h;L^2_\bk)$ the Bloch transform. Each element in the latter space is given by
	\begin{equation*}
		v(\bx;\bk) = \sum_{b \ge 1}v_b(\bk)\Phi_b(\bx;\bk).
	\end{equation*}
	The operator $\widehat{\mL^{(0)}}:= \mathcal{B}\mL^{(0)}\mathcal{B}^{-1}$ acting on $L^2(\mathbb{B}_h;L^2_\bk)$ has the effect
	\begin{equation*}
		\left( \widehat{\mL^{(0)}}v \right)(\bx,\bk) = \sum_{b \ge 1}E_b(\bk)v_b(\bk)\Phi_b(\bx;\bk).
	\end{equation*}
	
	By unitary equivalence, $\widehat{P_D^{(0)}}:=\mathcal{B}P_D^{(0)}\mathcal{B}^{-1}$ is the spectral projection onto the same part of the spectrum of $\widehat{\mL^{(0)}}$. Hence
	\begin{equation*}
		\left( \widehat{P_D^{(0)}}v \right)(\bx,\bk) = \sum_{b \ge 1}\chi_D \left( E_b(\bk)\right) v_b(\bk)\Phi_b(\bx;\bk).
	\end{equation*}
	Pulling this formula back to $L^2(\rN^2)$ yields the desired result.
\end{proof}

The estimate of this Floquet-Bloch decomposition is analogous to the corresponding parts of \cite{fefferman2014wave} and \cite{xie2021wave}, but we still make a complete proof.

\subsection{Decomposition with respective to quasi-momenta}
Recall from \Cref{EnergyLevelNearED} that for $E_b(\bk)$ to lie in the support of $\chi_D$, either $b = b_*,b_*+1$, or $|\bk - \bK| \ge k_0$. And there exists a $B \in \nN$ such that $E_b(\bk) \ge E_D+\delta$ whenever $b > B$. Hence, the summation \eqref{FBDecompositionOfPDF1} is divided into $3$ parts:
\begin{equation}
\begin{aligned}
	P_D^{(0)}F_1(\by,\varepsilon s) &= \sum_{b = b_*}^{b_*+1} \int_{|\bk-\bK|<\varepsilon} \chi_D(E_b(\bk))\Phi_b(\by;\bk)\langle \Phi_b(\bk),F_1(\varepsilon s)\rangle_{L^2(\rN^2)} \md \bk \\
			&\quad + \sum_{b = b_*}^{b_*+1} \int_{\varepsilon \le |\bk-\bK|<k_0} \chi_D(E_b(\bk))\Phi_b(\by;\bk)\langle \Phi_b(\bk),F_1(\varepsilon s)\rangle_{L^2(\rN^2)} \md \bk \\
			&\quad + \sum_{b \ge 1}^{B} \int_{|\bk-\bK|\ge k_0} \chi_D(E_b(\bk))\Phi_b(\by;\bk)\langle \Phi_b(\bk),F_1(\varepsilon s)\rangle_{L^2(\rN^2)} \md \bk \\
			&:= H_1(\varepsilon s) + H_2(\varepsilon s) + H_3(\varepsilon s).
\end{aligned}
\end{equation}
We re-tag $b \in \{ b_*,b_*+1 \}$ by $ b \in \{ +,-\}$ in the first two terms. As was repeated, the first term will be eliminated by the Dirac equation. The third term can be made arbitrarily small since we remove the quasi-momenta where the initial data are concentrated. The results of this section are concluded as follows.
\begin{proposition}
\label{PD0F1Bound}
	 If $\boldsymbol{\alpha}$ satisfies the Dirac equation, then for any $N \ge 3$, there exists a constant $C_N>0$ making the followings hold for all sufficiently small $\varepsilon > 0$ and $s \ge 0$:
	\begin{align}
		\label{H1Bound} \|H_1(\varepsilon s)\|_{L^2(\rN^2)}^2 &\le C_N \varepsilon^4\|\boldsymbol{\Gamma}(\varepsilon s)\|_{W^{N,1}(\mathbb{R}^2)}^2, \\
		\label{H2Bound} \|H_2(\varepsilon s)\|_{L^2(\rN^2)}^2 &\le C_N \varepsilon^4\|\boldsymbol{\Gamma}(\varepsilon s)\|_{W^{N,1}(\mathbb{R}^2)}^2, \\
		\label{H3Bound} \|H_3(\varepsilon s)\|_{L^2(\rN^2)}^2 &\le C_N \varepsilon^{2N}\|\boldsymbol{\Gamma}(\varepsilon s)\|_{W^{N,1}(\mathbb{R}^2)}^2.
	\end{align}
	Hence in view of \eqref{DefOfGamma} and with $N=3$, we can bound that
	\begin{equation}
	\begin{aligned}
		\left \| P_D^{(0)}F_1(\varepsilon s) \right \|_{L^2(\rN^2)}^2 &\le C\varepsilon^4 \left( \| \boldsymbol{\alpha}(\varepsilon s) \|_{W^{4,1}(\rN^2)}^2 + \| \partial_T\boldsymbol{\alpha}(\varepsilon s) \|_{W^{3,1}(\rN^2)}^2 \right) \\
				&\le C_\nu \varepsilon^4 \left( \| \boldsymbol{\alpha}(\varepsilon s) \|_{H^4_{1+\nu}(\rN^2)}^2 + \| \partial_T\boldsymbol{\alpha}(\varepsilon s) \|_{H^3_{1+\nu}(\rN^2)}^2 \right).
	\end{aligned}
	\end{equation}
\end{proposition}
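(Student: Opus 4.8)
The plan is to peel apart the Floquet--Bloch amplitudes $\langle\Phi_b(\bk),F_1(\varepsilon s)\rangle_{L^2(\rN^2)}$ entering each of $H_1,H_2,H_3$, to show that the only dangerous contribution---the one carried by the two Dirac bands near $\bK$---is annihilated by the Dirac equation, and to bound everything else using the decay of the Fourier transforms of the envelopes. The starting point is that each of the three terms of $F_1$ in (\ref{ExpressionOfF1}) is of the form $\varepsilon^2 g(\varepsilon\by)h(\by)$ with $g$ a slow envelope built from $\boldsymbol\alpha,\partial_T\boldsymbol\alpha,\nabla_\bY\boldsymbol\alpha$ (and $U$), and $h\in L^2_\bK$ one of $\Phi_k$, a component of $\mathcal{A}\Phi_k$, or a component of $\mathfrak{A}\Phi_k$. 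For such a product one has $\mathcal{B}[g(\varepsilon\cdot)h](\bx;\bk)=|\mathbb{B}_h|^{-1/2}h(\bx)\sum_{\bv\in\Lambda_h}\me^{\mi(\bk-\bK)\cdot\bv}g(\varepsilon(\bx-\bv))$, and Poisson summation over $\Lambda_h$ turns the lattice sum into $\varepsilon^{-2}\sum_{\mathbf{m}\in\Lambda_h^*}\me^{\mi(\bk-\bK-\mathbf{m})\cdot\bx}\,\widehat{g}\big((\bk-\bK-\mathbf{m})/\varepsilon\big)$ up to a fixed constant. Pairing against $\Phi_b(\bk)$ through (\ref{AbbreviationOfInnerProductAgainstPhi}) gives, for each band $b$,
\begin{equation*}
	\langle\Phi_b(\bk),F_1(\varepsilon s)\rangle_{L^2(\rN^2)}=\sum_{\mathbf{m}\in\Lambda_h^*}\sum_k \widehat{g_k}\!\left(\frac{\bk-\bK-\mathbf{m}}{\varepsilon}\right)c^{\mathbf{m}}_{b,k}(\bk),
\end{equation*}
where $g_k$ are the finitely many slow envelopes appearing and $c^{\mathbf{m}}_{b,k}(\bk)=\langle\Phi_b(\bk),\me^{\mi(\bk-\bK-\mathbf{m})\cdot}h_k\rangle_{L^2_\bk}$ are uniformly bounded overlap coefficients, jointly continuous in $\bk$. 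Since $|\widehat{g_k}(\boldsymbol\zeta)|\lesssim_N\langle\boldsymbol\zeta\rangle^{-N}\|g_k\|_{W^{N,1}}$, this is exactly where the $W^{N,1}$ norms of $\boldsymbol\Gamma$ enter: it lets us pull the Fourier factor out of the $\md\bk$-integrals, sum over $\Lambda_h^*$, and extract smallness at far-away quasi-momenta.

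For $H_3$ ($|\bk-\bK|\ge k_0$, $b\le B$) this already finishes the job: every argument $(\bk-\bK-\mathbf{m})/\varepsilon$ has modulus $\gtrsim k_0/\varepsilon$, so each summand is $O(\varepsilon^N\|g_k\|_{W^{N,1}})$, the $\Lambda_h^*$-series converges for $N$ large, and integrating over the bounded quasi-momentum region and summing over the finitely many bands $b\le B$ yields (\ref{H3Bound}) with an $\varepsilon^{2N}$. For $H_2$ ($\varepsilon\le|\bk-\bK|<k_0$, $b=\pm$) only the $\mathbf{m}=0$ term fails to be $O(\varepsilon^N)$; in the substitution $\boldsymbol\zeta=(\bk-\bK)/\varepsilon$ (so $\md\bk=\varepsilon^2\md\boldsymbol\zeta$, $1\le|\boldsymbol\zeta|<k_0/\varepsilon$) one obtains $\|H_2\|_{L^2}^2\lesssim \varepsilon^2\,\varepsilon^2\int_{|\boldsymbol\zeta|\ge1}\langle\boldsymbol\zeta\rangle^{-2N}\md\boldsymbol\zeta\,\|\boldsymbol\Gamma\|_{W^{N,1}}^2$, which is $O(\varepsilon^4)$ once $N\ge2$, giving (\ref{H2Bound}).

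The core of the argument, and the main obstacle, is $H_1$ ($|\bk-\bK|<\varepsilon$, $b=\pm$): again only $\mathbf{m}=0$ survives, but the naive bound gives only $O(\varepsilon^2)$, so one further power of $\varepsilon$ must be extracted, and this is where the Dirac equation is used. The idea is to expand $c^{0}_{\pm,k}(\bk)$ via the near-Dirac-point formula (\ref{PhikNearK}): writing $\boldsymbol\kappa=\bk-\bK$ and cancelling the carrier phase $\me^{\mi\boldsymbol\kappa\cdot\by}$,
\begin{equation*}
	c^{0}_{\pm,k}(\bk)=\tfrac{1}{\sqrt2}\Big(\overline{\tfrac{\kappa_1+\mi\kappa_2}{|\boldsymbol\kappa|}}\,\langle\Phi_1,h_k\rangle_{L^2_\bK}\pm\langle\Phi_2,h_k\rangle_{L^2_\bK}\Big)+r_{\pm,k}(\boldsymbol\kappa),\qquad |r_{\pm,k}(\boldsymbol\kappa)|\lesssim|\boldsymbol\kappa|,
\end{equation*}
the remainder being $O(|\boldsymbol\kappa|)$ because $\|\Phi_{R,\pm}(\boldsymbol\kappa)\|_{H^2_\bK}\lesssim|\boldsymbol\kappa|$. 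Substituting into the displayed formula and performing the sum over $k$ and over the three constituents of $F_1$, the leading part---the $\partial_T$-term paired against $\langle\Phi_i,\Phi_k\rangle=\delta_{ik}$, the $\nabla_\bY$-term against the matrices of \Cref{MathcalAMatrix} (the vector $\boldsymbol\kappa$ from $\nabla_\bY\alpha_k$ contracted with $\begin{pmatrix}1\\ \mi\end{pmatrix}$ producing $\kappa_1+\mi\kappa_2$), and the $U$-term against the matrices of \Cref{MathfrakAMatrix}---reassembles, at the Fourier mode $\boldsymbol\zeta=\boldsymbol\kappa/\varepsilon$, exactly the chirality-weighted combination $\big(\text{left}-\text{right}\big)$ of the two lines of the Dirac equation (\ref{DiracEquation}); since $\boldsymbol\alpha$ solves (\ref{DiracEquation}), this vanishes identically. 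What remains is $r_{\pm,k}(\boldsymbol\kappa)$ together with the subleading pieces produced by the $\boldsymbol\kappa$-dependence of $c^0_{\pm,k}(\bk)$, all of size $O(|\boldsymbol\kappa|)=O(\varepsilon|\boldsymbol\zeta|)$; hence $|\langle\Phi_\pm(\bk),F_1(\varepsilon s)\rangle_{L^2}|\lesssim \varepsilon|\boldsymbol\zeta|\langle\boldsymbol\zeta\rangle^{-N}\|\boldsymbol\Gamma\|_{W^{N,1}}$ on $|\boldsymbol\zeta|<1$, and the same change of variables as for $H_2$ gives $\|H_1\|_{L^2}^2\lesssim \varepsilon^2\,\varepsilon^2\int_{|\boldsymbol\zeta|<1}|\boldsymbol\zeta|^2\md\boldsymbol\zeta\,\|\boldsymbol\Gamma\|_{W^{N,1}}^2$, which is (\ref{H1Bound}).

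I expect the delicate bookkeeping of this cancellation to be the real work: one must carry the carrier phase and the remainder $\Phi_{R,\pm}$ correctly through (\ref{PhikNearK}), verify that the leading linear combination is literally (the Fourier transform of) the left-minus-right side of (\ref{DiracEquation})---so that the constants $\nu_{_F},\mu,\xi,\xi^\#$ appearing there indeed match the entries in \Cref{MathcalAMatrix} and \Cref{MathfrakAMatrix}---and keep track of which derivative of $\boldsymbol\alpha$ lands in which $W^{N,1}$ norm (here $\boldsymbol\Gamma$ carries one derivative of $\boldsymbol\alpha$ alongside $\partial_T\boldsymbol\alpha$). Finally, assembling (\ref{H1Bound})--(\ref{H3Bound}) with $N=3$ and invoking the Sobolev embedding $\|f\|_{W^{3,1}}\lesssim_\nu\|f\|_{H^3_{1+\nu}}$ (Cauchy--Schwarz against $\langle\bY\rangle^{-(1+\nu)}\in L^2(\rN^2)$, valid for $\nu>0$) together with the definition of $\boldsymbol\Gamma$ yields the stated bound on $\|P_D^{(0)}F_1(\varepsilon s)\|_{L^2(\rN^2)}^2$, and hence, after integrating the propagator representation of $G_D$ over $[0,t]$, the bound (\ref{GDBound}) as well.
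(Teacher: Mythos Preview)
Your overall architecture---Poisson summation, decay of $\widehat{\boldsymbol\Gamma}$, and Dirac cancellation near $\bK$---matches the paper's, and your treatments of $H_3$ and $H_1$ are essentially the same as theirs. The gap is in your $H_2$ bound.

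You write $\|H_2\|_{L^2}^2\lesssim \varepsilon^2\cdot\varepsilon^2\int_{|\boldsymbol\zeta|\ge1}\langle\boldsymbol\zeta\rangle^{-2N}\md\boldsymbol\zeta\,\|\boldsymbol\Gamma\|_{W^{N,1}}^2$, but your own amplitude formula carries no $\varepsilon$ prefactor (the $\varepsilon^2$ in $F_1$ cancels against the $\varepsilon^{-2}$ from Poisson summation), so after the change of variables $\md\bk=\varepsilon^2\md\boldsymbol\zeta$ there is only \emph{one} factor of $\varepsilon^2$. Bounding the $\mathbf m=0$ amplitude by $|\widehat{\boldsymbol\Gamma}(\boldsymbol\zeta)|$ alone yields only $\|H_2\|_{L^2}^2\lesssim\varepsilon^2\|\boldsymbol\Gamma\|_{W^{N,1}}^2$, which is one order short of (\ref{H2Bound}).

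The missing ingredient is that the Dirac cancellation must be applied on the \emph{entire} disk $|\boldsymbol\kappa|<k_0$, not just on $|\boldsymbol\kappa|<\varepsilon$. The paper does exactly this: after killing $D_1,D_2$ by (\ref{DiracEquation}), the surviving $\mathbf m=0$ contribution is $\widehat{\Gamma_r}(\boldsymbol\kappa/\varepsilon)\langle\Phi_{R,\pm}(\boldsymbol\kappa),\Psi_r\rangle$, of size $\lesssim|\boldsymbol\kappa|\,|\widehat{\boldsymbol\Gamma}(\boldsymbol\kappa/\varepsilon)|\lesssim\|\boldsymbol\Gamma\|_{W^{N,1}}\varepsilon^N|\boldsymbol\kappa|^{-(N-1)}$ on $\varepsilon\le|\boldsymbol\kappa|<k_0$. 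Integrating $|\boldsymbol\kappa|^{-2(N-1)}$ over that annulus gives $O(\varepsilon^{-2N+4})$, and combined with $\varepsilon^{2N}$ this produces the required $\varepsilon^4$. This is also why the hypothesis is $N\ge3$ (so that $\int_{|\boldsymbol\zeta|\ge1}|\boldsymbol\zeta|^{-2(N-1)}\md\boldsymbol\zeta<\infty$ in $\rN^2$) rather than the $N\ge2$ you claim. Once you carry the cancellation through $H_2$ as well, the rest of your sketch goes through unchanged.
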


Take the following conventions
\begin{equation}
\label{DefOfGamma}
\begin{alignedat}{2}
  \Gamma_{1}(\bY,T)&=-2\mi\sqrt{E_D}\,\partial_T\alpha_1(\bY,T),\quad&
  \Gamma_{2}(\bY,T)&=-2\mi\sqrt{E_D}\,\partial_T\alpha_2(\bY,T),\\
  \Gamma_{3}(\bY,T)&=\nabla_\bY\alpha_1(\bY,T),\quad&
  \Gamma_{4}(\bY,T)&=\nabla_\bY\alpha_2(\bY,T),\\
  \Gamma_{5}(\bY,T)&=-\alpha_1(\bY,T)U^T(\bY),\quad&
  \Gamma_{6}(\bY,T)&=-\alpha_2(\bY,T)U^T(\bY),
\end{alignedat}
\end{equation}
and
\begin{equation}
\label{Psi's}
\begin{alignedat}{2}
  \Psi_{1}&=\Phi_1,\quad&
  \Psi_{2}&=\Phi_2,\\
  \Psi_{3}&=\mi \mathcal{A}\Phi_1,\quad&
  \Psi_{4}&=\mi \mathcal{A}\Phi_2,\\
  \Psi_{5}&=\mathfrak{A}\Phi_1,\quad&
  \Psi_{6}&=\mathfrak{A}\Phi_2.
\end{alignedat}
\end{equation}
Following Einstein summation convention, we have
\begin{equation*}
	F_1(\by,\varepsilon t) = \varepsilon^2 \Gamma_{r}(\varepsilon\by,\varepsilon s)\Psi_{r}(\by),
\end{equation*}
where we slightly abuse the notation when, for example, $\Gamma_{5}\Psi_{5} = \Gamma_{5}:\Psi_{5}$. The notation $\boldsymbol{\Gamma}$ collects all $\Gamma_r$ aligned as a vector. We list the several frequently used results here. They are proved in \cite{fefferman2014wave}.
\begin{proposition}
\label{FarMomentaDecay}
Let $\Gamma(\bY) \in W^{N,1}(\mathbb{R}^2),N\ge 3$ and $\Psi(\by) \in C^\infty(\mathbb{R}^2) \cap L^2_{\mathbf{K}}(\mathbb{R}^2 / \Lambda)$. Then, the following statements hold
\begin{equation}
\label{ExpansionOfInnerproductAgainstPeriodicFunctions}
\begin{aligned}
\langle &\Phi_\pm(\cdot; \mathbf{k}), \Gamma(\varepsilon \cdot)\Psi(\cdot) \rangle_{L^2(\rN^2)} \\
&= \! \varepsilon^{-2} \frac{1}{|\Omega_h|} \int_{\Omega_h} \!\! \Phi_\pm(\by; \mathbf{k})\! \sum_{\mathbf{m} \in \mathbb{Z}^2}\! e^{\mi(m_1 \mathbf{k}_1 + m_2 \mathbf{k}_2 + \mathbf{k} - \mathbf{K}) \cdot \by} \widehat{\Gamma}\left( \frac{m_1 \mathbf{k}_1 + m_2 \mathbf{k}_2 + \mathbf{k} - \mathbf{K}}{\varepsilon} \right) \Psi(\by) \md \by,
\end{aligned}	
\end{equation}
and
\begin{align}
\label{RapidDecay}
|\widehat{\Gamma}(\boldsymbol{\kappa})| \leq C_N \frac{1}{|\boldsymbol{\kappa}|^N} \|\Gamma\|_{W^{N,1}(\mathbb{R}^2)}, \quad \forall \, \boldsymbol{\kappa} \neq 0,
\end{align}
where $C_N$ is a universal constant depending only on $N \in \nN,N \ge 3$.

Furthermore, for any $\mathbf{k} \in \mathbb{B}_h$ and $\varepsilon > 0$ sufficiently small,
\begin{align}
\label{DistantByk0} |m_1 \mathbf{k}_1 + m_2 \mathbf{k}_2 + \mathbf{k} - \mathbf{K}| &\geq C(1 + |\mathbf{m}|), \quad \forall \, \mathbf{m} \in \mathbb{Z}^2, \ |\mathbf{k} - \mathbf{K}| \ge k_0, \\
\label{MomentaInk0} |m_1 \mathbf{k}_1 + m_2 \mathbf{k}_2 + \mathbf{k} - \mathbf{K}| &\geq C|\mathbf{m}|, \quad \forall \, \mathbf{m} \neq (0,0), \ |\mathbf{k} - \mathbf{K}| < k_0.
\end{align}
for some uniform $C>0$ independent of $\varepsilon$.
\end{proposition}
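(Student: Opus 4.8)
The three assertions in the proposition are logically independent, so I would establish them separately. For the representation~(\ref{ExpansionOfInnerproductAgainstPeriodicFunctions}) the plan is to Fourier-expand the two structure-periodic factors against which the slow envelope $\Gamma(\varepsilon\,\cdot)$ is paired. Since $\Phi_\pm(\cdot;\bk)\in L^2_\bk$ and $\Psi\in L^2_\bK$, the product $\overline{\Phi_\pm(\by;\bk)}\,\Psi(\by)$ picks up the phase $\me^{\mi(\bK-\bk)\cdot\bv}$ under $\by\mapsto\by+\bv$ with $\bv\in\Lambda_h$; hence $\me^{-\mi(\bK-\bk)\cdot\by}\,\overline{\Phi_\pm(\by;\bk)}\,\Psi(\by)$ is $\Lambda_h$-periodic and smooth, so it has an absolutely convergent Fourier series $\sum_{\mathbf{m}\in\zN^2}p_{\mathbf{m}}(\bk)\,\me^{\mi(m_1\bk_1+m_2\bk_2)\cdot\by}$, the coefficients $p_{\mathbf{m}}(\bk)$ being integrals over $\Omega_h$. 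Substituting into $\langle\Phi_\pm(\bk),\Gamma(\varepsilon\,\cdot)\Psi\rangle_{L^2(\rN^2)}=\int_{\rN^2}\overline{\Phi_\pm(\by;\bk)}\,\Gamma(\varepsilon\by)\,\Psi(\by)\,\md\by$ and interchanging sum and integral --- legitimate because $\Gamma(\varepsilon\,\cdot)\in L^1$ and $\sum_{\mathbf{m}}|p_{\mathbf{m}}(\bk)|<\infty$ --- the $\mathbf{m}$-th term becomes $p_{\mathbf{m}}(\bk)\int_{\rN^2}\me^{\mi\boldsymbol{\mu}\cdot\by}\Gamma(\varepsilon\by)\,\md\by=\varepsilon^{-2}p_{\mathbf{m}}(\bk)\,\widehat{\Gamma}(-\boldsymbol{\mu}/\varepsilon)$ with $\boldsymbol{\mu}=m_1\bk_1+m_2\bk_2+\bK-\bk$. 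Relabelling $\mathbf{m}\mapsto-\mathbf{m}$, which only permutes the sum over $\zN^2$ and the Fourier coefficients, turns the argument of $\widehat{\Gamma}$ into $(m_1\bk_1+m_2\bk_2+\bk-\bK)/\varepsilon$ and reassembles the prefactor into exactly the $\Omega_h$-integral written in~(\ref{ExpansionOfInnerproductAgainstPeriodicFunctions}).

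The decay bound~(\ref{RapidDecay}) is the classical fact that smoothness forces Fourier decay: for any multi-index $\boldsymbol{\gamma}$ with $|\boldsymbol{\gamma}|=N$, integration by parts gives $\boldsymbol{\kappa}^{\boldsymbol{\gamma}}\widehat{\Gamma}(\boldsymbol{\kappa})=\pm\,\widehat{\partial^{\boldsymbol{\gamma}}\Gamma}(\boldsymbol{\kappa})$, whence $|\boldsymbol{\kappa}^{\boldsymbol{\gamma}}\widehat{\Gamma}(\boldsymbol{\kappa})|\le\|\partial^{\boldsymbol{\gamma}}\Gamma\|_{L^1}\le\|\Gamma\|_{W^{N,1}}$; since $|\boldsymbol{\kappa}|^N\lesssim_N\sum_{|\boldsymbol{\gamma}|=N}|\boldsymbol{\kappa}^{\boldsymbol{\gamma}}|$ this yields~(\ref{RapidDecay}), with the matrix- or vector-valued $\Gamma$ handled componentwise.

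For the separation bounds, set $\boldsymbol{\ell}(\mathbf{m})=m_1\bk_1+m_2\bk_2$; as $\{\bk_1,\bk_2\}$ is a basis there is $c_1>0$ with $|\boldsymbol{\ell}(\mathbf{m})|\ge c_1|\mathbf{m}|$ for every $\mathbf{m}\in\zN^2$. To prove~(\ref{MomentaInk0}), when $|\bk-\bK|<k_0$ and $\mathbf{m}\ne\mathbf{0}$ the triangle inequality gives $|\boldsymbol{\ell}(\mathbf{m})+\bk-\bK|\ge c_1|\mathbf{m}|-k_0\ge\tfrac{c_1}{2}|\mathbf{m}|$ once $k_0\le c_1/2$ (using $|\mathbf{m}|\ge1$). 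For~(\ref{DistantByk0}) one splits on the size of $\mathbf{m}$: as $\mathbb{B}_h$ is bounded, $|\bk-\bK|\le D_0$ for a fixed $D_0$, so when $c_1|\mathbf{m}|\ge2D_0$ the same inequality gives $|\boldsymbol{\ell}(\mathbf{m})+\bk-\bK|\ge\tfrac{c_1}{2}|\mathbf{m}|\gtrsim1+|\mathbf{m}|$; for the finitely many remaining $\mathbf{m}$ (including $\mathbf{m}=\mathbf{0}$) one invokes compactness, namely that $\bk\mapsto|\boldsymbol{\ell}(\mathbf{m})+\bk-\bK|$ is continuous and strictly positive on $\{\bk\in\mathbb{B}_h:|\bk-\bK|\ge k_0\}$ and hence bounded below there.

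The mathematics here is elementary; the delicate part is bookkeeping at two spots. First, one must pin down the Fourier normalization and track signs carefully so that the frequency in~(\ref{ExpansionOfInnerproductAgainstPeriodicFunctions}) comes out as $m_1\bk_1+m_2\bk_2+\bk-\bK$ rather than its negative --- the relabelling $\mathbf{m}\mapsto-\mathbf{m}$ is precisely what arranges this. Second, in the last step of~(\ref{DistantByk0}) the quantity $|\boldsymbol{\ell}(\mathbf{m})+\bk-\bK|$ must be genuinely bounded away from $0$, i.e.\ $\bk$ must not be a $\Lambda_h^*$-translate of $\bK$; but the two other vertices of $\mathbb{B}_h$ in the $C_3$-orbit of $\bK$ are exactly such translates, so the constraint ``$|\bk-\bK|\ge k_0$'' must be read on the torus $\rN^2/\Lambda_h^*$ --- equivalently, $\bk$ is kept away from $\bK$ and from every vertex in its orbit --- which, after shrinking $k_0$ below the mutual separation of those vertices, is harmless and only affects the constants. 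Everything else is a routine computation.
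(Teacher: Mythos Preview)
Your argument is correct. The paper itself does not prove this proposition; it simply states that the results ``are proved in \cite{fefferman2014wave}'' and uses them as black boxes. Your direct proof follows exactly the standard route taken in that reference: Fourier-expand the quasi-periodic product $\overline{\Phi_\pm(\cdot;\bk)}\Psi$ on the torus, pair each mode against the slow envelope to obtain a rescaled Fourier transform, and handle the lattice-separation estimates by a large-$|\mathbf m|$/small-$|\mathbf m|$ split. The observation about the vertices in the $C_3$-orbit of $\bK$ being $\Lambda_h^*$-translates of $\bK$, and hence the need to read ``$|\bk-\bK|\ge k_0$'' modulo $\Lambda_h^*$ for~(\ref{DistantByk0}) to hold, is a genuine subtlety that the paper glosses over by citation; your treatment of it is correct and is implicitly how the result is used downstream.
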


\subsection{Far-momenta estimate}
\textit{Proof of \eqref{H3Bound}.} We begin with the easier one. Use the expansion \eqref{ExpansionOfInnerproductAgainstPeriodicFunctions} and estimate \eqref{DistantByk0}:
\begin{equation*}
\begin{aligned}
	\big| \langle \Phi_b &(\bk),F_1 (\varepsilon s)\rangle_{L^2(\rN^2)} \big| \\ 
			&\le \frac{1}{|\Omega_h|} \int_{\Omega_h} \left| \overline{\Phi_b(\by;\bk)} \right| \sum_{\mathbf{m} \in \zN^2} \left| \widehat{\Gamma_{r}}\left( \frac{m_1\bk_1+m_2\bk_2+\bk - \bK}{\varepsilon},\varepsilon s\right) \Psi_{r}(\by) \right| \md \by \\
			&\lesssim_N  \sum_{\mathbf{m} \in \zN^2} \frac{\varepsilon^N}{(1+|\mathbf{m}|)^N} \|\boldsymbol{\Gamma}(\varepsilon s)\|_{W^{N,1}(\mathbb{R}^2)} \\
			&\lesssim_N \varepsilon^{N}\|\boldsymbol{\Gamma}(\varepsilon s)\|_{W^{N,1}(\mathbb{R}^2)},\quad \text{when } |\bk-\bK|\ge k_0, b \le B,
\end{aligned}
\end{equation*}
provided that $N \ge 3$ so that the series converge. Hence,
\begin{equation}
	\begin{aligned}
		\|H_3(\varepsilon s)\|_{L^2(\rN^2)}^2 &= \sum_{b = 1} ^B\int_{|\bk-\bK|\ge k_0} \left| \chi_D(E_b(\bk)) \langle \Phi_b(\bk),F_1(\varepsilon s)\rangle_{L^2(\rN^2)} \right|^2 \md \bk \\
		&\lesssim_N \sum_{b =1}^B \int_{\mathbb{B}_h} \varepsilon^{2N}\|\boldsymbol{\Gamma}(\varepsilon s)\|_{W^{N,1}(\mathbb{R}^2)}^2 \md\bk \\
		&\lesssim_N \varepsilon^{2N}\|\boldsymbol{\Gamma}(\varepsilon s)\|_{W^{N,1}(\mathbb{R}^2)}^2 ,
	\end{aligned}
\end{equation}
for every $N\ge 3$.

\subsection{Near-momenta estimate}
\textit{Proof of \eqref{H1Bound} and \eqref{H2Bound}.} We deal with $H_1$ and $H_2$ here. First estimate $\langle \Phi_b(\bk),F_1(\varepsilon s) \rangle_{L^2(\rN^2)}$ when $b \in \{ -,+\}$. We expand using \eqref{ExpansionOfInnerproductAgainstPeriodicFunctions} that
\begin{equation*}
	\begin{aligned}
		\langle \Phi_\pm(\bk),F_1 (\varepsilon s)\rangle_{L^2(\rN^2)} &= \frac{\varepsilon^{-2}}{|\Omega_h|} \int_{\Omega_h} \overline{\Phi_\pm(\by;\bk)} \sum_{\mathbf{m} \in \zN^2} \me^{\mi(m_1\bk_1+m_2\bk_2+\bk - \bK)\by} \\
		& \quad \times \varepsilon^2 \widehat{\Gamma_{r}}\left( \frac{m_1\bk_1+m_2\bk_2+\bk - \bK}{\varepsilon},\varepsilon s\right) \Psi_{r}(\by)\md \by \\
		& = I_{1,\pm}+I_{2,\pm},
	\end{aligned}
\end{equation*}
where we put
\begin{equation*}
\begin{aligned}
	I_{1,\pm} &:=	\frac{1}{|\Omega_h|} \int_{\Omega_h} \overline{\Phi_\pm(\by;\bk)} \me^{\mi(\bk - \bK)\by}\widehat{\Gamma_{r}}\left( \frac{\bk - \bK}{\varepsilon},\varepsilon s\right) \Psi_{r}(\by)\md \by \\
	I_{2,\pm} &:= \langle \Phi_\pm(\bk),F_1 (\varepsilon s)\rangle_{L^2(\rN^2)} - I_{1,\pm} = \sum_{\mathbf{m} \ne (0,0)}\cdots.
\end{aligned} 
\end{equation*}
The $I_{2,\pm}$ term is easy. When $|\bk-\bK| < k_0 $, combining \eqref{RapidDecay} and \eqref{MomentaInk0} to yield
\begin{equation*}
\begin{aligned}
	\left| \widehat{\Gamma_{r}}\left( \frac{m_1\bk_1+m_2\bk_2+\bk - \bK}{\varepsilon},\varepsilon s\right) \right| &\le C_N \frac{\varepsilon^N}{|m_1\bk_1+m_2\bk_2+\bk - \bK|^N} \|\Gamma_r(\varepsilon s)\|_{W^{N,1}(\mathbb{R}^2)} \\
						 &\le \frac{C_N}{C}\frac{\varepsilon^{N}}{|\mathbf{m}|^N}\|\Gamma_r(\varepsilon s)\|_{W^{N,1}(\mathbb{R}^2)}.
\end{aligned}
\end{equation*}
Hence, choosing $N\ge3$, it is easy to bound
\begin{equation}
\label{I2}
\begin{aligned}
	\Big|&\mathds{1}_{\{|\bk-\bK| < k_0\}} I_{2,\pm}\Big| \\
				& \, \le \sum_{\mathbf{m} \ne (0,0)} \frac{1}{|\Omega_h|} \int_{\Omega_h} \left|\overline{\Phi_\pm(\by;\bk)}  \right| \left|\widehat{\Gamma_{r}}\left( \frac{m_1\bk_1+m_2\bk_2+\bk - \bK}{\varepsilon},\varepsilon s\right) \right| \left| \Psi_{r}(\by) \right|\md \by 	\\
				& \, \lesssim_N \sum_{\mathbf{m} \ne (0,0)} \frac{\varepsilon^{N}}{|\mathbf{m}|^N}\|\boldsymbol{\Gamma}(\varepsilon s)\|_{W^{N,1}(\mathbb{R}^2)} \\
				& \, \lesssim_N \varepsilon^{N}\|\boldsymbol{\Gamma}(\varepsilon s)\|_{W^{N,1}(\mathbb{R}^2)},
\end{aligned}
\end{equation}
with a constant uniform in $\varepsilon>0$ and $\bk \in \mathbb{B}_h$.

Now we handle $I_{1,\pm}$. For $|\bk-\bK| < k_0$, we substitute \eqref{PhikNearK} into the expression of $I_{1,\pm}$:
\begin{equation}
\label{FullI1}
	\begin{aligned}
		\sqrt{2} \mathds{1}_{\{|\bk-\bK| < k_0\}} I_{1,\pm} &= \widehat{\Gamma_{r}}\left(\frac{\boldsymbol{\kappa}}{\varepsilon},\varepsilon s\right)\left \langle \frac{\kappa_1 + \mi \kappa_2}{|\boldsymbol{\kappa}|}\Phi_1 \pm \Phi_2,\Psi_{r} \right\rangle_{L^2_\bK(\rN^2/\Lambda_h)}  \\
			&\quad + \widehat{\Gamma_{r}}\left(\frac{\boldsymbol{\kappa}}{\varepsilon},\varepsilon s\right) \langle \Phi_{R,\pm}(\boldsymbol{\kappa}),\Psi_{r} \rangle_{L^2_\bK(\rN^2/\Lambda_h)}\\
			&= \widehat{\Gamma_{r}}\left(\frac{\boldsymbol{\kappa}}{\varepsilon},\varepsilon s\right)\frac{\kappa_1 - \mi \kappa_2}{|\boldsymbol{\kappa}|}\left \langle\Phi_1,\Psi_{r} \right\rangle_{L^2_\bK(\rN^2/\Lambda_h)} \\
			&\quad \pm \widehat{\Gamma_{r}}\left(\frac{\boldsymbol{\kappa}}{\varepsilon},\varepsilon s\right)\left \langle\Phi_2,\Psi_{r}\right\rangle_{L^2_\bK(\rN^2/\Lambda_h)} \\
			&\quad + \widehat{\Gamma_{r}}\left(\frac{\boldsymbol{\kappa}}{\varepsilon},\varepsilon s\right) \langle \Phi_{R,\pm}(\boldsymbol{\kappa}),\Psi_{r} \rangle_{L^2_\bK(\rN^2/\Lambda_h)}.
	\end{aligned}
\end{equation}
Consider first the case $|\boldsymbol{\kappa}|= |\bk - \bK|<\varepsilon$, then the last term
\begin{equation*}
	\left| \widehat{\Gamma_{r}}\left(\frac{\boldsymbol{\kappa}}{\varepsilon},\varepsilon s\right) \langle \Phi_{R,\pm}(\boldsymbol{\kappa}),\Psi_{r} \rangle_{L^2_\bK(\rN^2/\Lambda_h)} \right| \lesssim \varepsilon\|\boldsymbol{\Gamma}(\varepsilon s) \|_{L^1(\rN^2)},
\end{equation*}
with a constant independent from $\varepsilon>0$ and $|\boldsymbol{\kappa}|<\varepsilon$. The other inner products appearing in \eqref{FullI1} are computed in \Cref{MathcalAMatrix}, \eqref{MathfrakAMatrix} and are listed below:
\begin{equation*}
\begin{alignedat}{3}
	\langle \Phi_1,\Psi_{1} \rangle = 1, \quad &\langle \Phi_1,\Psi_{3} \rangle = \mathbf{0}, & \quad \langle \Phi_1,\Psi_{5} \rangle =\xi \sigma_0 + \xi^\# \sigma_2, \\
	\langle \Phi_1,\Psi_{2} \rangle = 0, \quad &\langle \Phi_1,\Psi_{4} \rangle = \nu_{_F} (\mi, -1)^T, & \quad \langle \Phi_1,\Psi_{6} \rangle =\mu (\sigma_3-\mi \sigma_1), \\
	\langle \Phi_2,\Psi_{1} \rangle = 0, \quad &\langle \Phi_2,\Psi_{3} \rangle = \nu_{_F} (\mi, 1)^T, & \quad
		\langle \Phi_2,\Psi_{5} \rangle =\bar{\mu}(\sigma_3 + \mi \sigma_1), \\
	\langle \Phi_2,\Psi_{2} \rangle = 1, \quad &\langle \Phi_2,\Psi_{4} \rangle = \mathbf{0}, & \quad \langle \Phi_2,\Psi_{6} \rangle =\xi \sigma_0 + \xi^\# \sigma_2.
\end{alignedat}
\end{equation*}
Hence
\begin{equation*}
	\sqrt{2}\mathds{1}_{\{|\bk-\bK|<k_0\}} I_{1,\pm}\! = \! \frac{\kappa_1 - \mi \kappa_2}{|\boldsymbol{\kappa}|}D_1 \! \pm \! D_2 \! +  \!O_{L^{\infty}(\{|\bk-\bK|<k_0\})}\left(\varepsilon \| \boldsymbol{\Gamma}(\varepsilon s)\|_{L_1(\rN^2)} \right),
\end{equation*}
where
\begin{equation*}
\begin{aligned}
	D_1 &=  -2\mi\sqrt{E_D}\,\widehat{\partial_T\alpha_1}\left(\frac{\boldsymbol{\kappa}}{\varepsilon},\varepsilon s\right) + \nu_{_F} \left( \mi \widehat{\partial_{Y_1}\alpha_2}\left(\frac{\boldsymbol{\kappa}}{\varepsilon},\varepsilon s\right) - \widehat{\partial_{Y_2}\alpha_2}\left(\frac{\boldsymbol{\kappa}}{\varepsilon},\varepsilon s\right) \right) \\
	&\qquad - \mathcal{F}\big[ \mu\left(\operatorname{Tr}(U\sigma_3)-\mi\operatorname{Tr}(U\sigma_1) \right)\alpha_2 + \left( \xi \operatorname{Tr}(U\sigma_0) + \xi^\# \operatorname{Tr}(U\sigma_2) \right)\alpha_1\big]\left(\frac{\boldsymbol{\kappa}}{\varepsilon},\varepsilon s\right)  \\
	D_2 &=	-2\mi\sqrt{E_D}\,\widehat{\partial_T\alpha_2}\left(\frac{\boldsymbol{\kappa}}{\varepsilon},\varepsilon s\right) + \nu_{_F} \left( \mi \widehat{\partial_{Y_1}\alpha_1}\left(\frac{\boldsymbol{\kappa}}{\varepsilon},\varepsilon s\right) + \widehat{\partial_{Y_2}\alpha_1}\left(\frac{\boldsymbol{\kappa}}{\varepsilon},\varepsilon s\right) \right) \\
				&\qquad - \mathcal{F}\big[ \bar{\mu}\left(\operatorname{Tr}(U\sigma_3)+\mi\operatorname{Tr}(U\sigma_1) \right)\alpha_1 + \left( \xi \operatorname{Tr}(U\sigma_0) + \xi^\# \operatorname{Tr}(U\sigma_2) \right)\alpha_2\big]\left(\frac{\boldsymbol{\kappa}}{\varepsilon},\varepsilon s\right).
\end{aligned}
\end{equation*}
These quantities are precisely the Fourier transforms of \eqref{DiracEquation} and thus vanish. To conclude, we have that
\begin{equation}
\label{I1Closed}
	\left| \mathds{1}_{\{|\bk-\bK|<\varepsilon\}} I_{1,\pm}  \right|\lesssim \varepsilon\| \boldsymbol{\Gamma}(\varepsilon s)\|_{L_1(\rN^2)}.
\end{equation}
Next, for $\varepsilon \le |\boldsymbol{\kappa}|=|\bk-\bK| <k_0$, we still have
\begin{equation}
\label{I1Mid}
\begin{aligned}
	\left| \sqrt{2} \mathds{1}_{\{\varepsilon\le |\bk-\bK| < k_0\}} I_{1,\pm} \right| &
	= \left |\widehat{\Gamma_{r}}\left(\frac{\boldsymbol{\kappa}}{\varepsilon},\varepsilon s\right) \langle \Phi_{r,\pm}(\boldsymbol{\kappa}),\Psi_{r} \rangle_{L^2_\bK(\rN^2/\Lambda_h)} \right| \\
	&\lesssim |\boldsymbol{\kappa}|\left |\widehat{\boldsymbol{\Gamma}}\left(\frac{\boldsymbol{\kappa}}{\varepsilon},\varepsilon s\right) \right| \\
	&\lesssim_N \left\| \boldsymbol{\Gamma} (\varepsilon s) \right\|_{W^{N,1}(\rN^2)} \frac{\varepsilon^N}{|\boldsymbol{\kappa}|^{N-1}}.
\end{aligned}
\end{equation}

Combining \eqref{I1Closed} and \eqref{I2} and choosing $N \ge 3$ we have
\begin{equation}
	\begin{aligned}
		\|H_1(\varepsilon s)\|_{L^2(\rN^2)}^2 &= \sum_{b = +,-} \int_{\mathbb{B}_h} \left| \mathds{1}_{\{|\bk-\bK| < \varepsilon\}} \chi_D(E_b(\bk)) \langle \Phi_b(\bk),F_1(\varepsilon s)\rangle_{L^2(\rN^2)} \right|^2 \md \bk \\
			  & \le\sum_{b = +,-} \int_{\mathbb{B}_h} \left| \mathds{1}_{\{|\bk-\bK| < \varepsilon\}} (I_{1,b}+I_{2,b}) \right|^2 \md \bk \\
			  & \lesssim_N \int_{|\boldsymbol{\kappa}| < \varepsilon} \varepsilon^2\left\| \boldsymbol{\Gamma} (\varepsilon s) \right\|_{W^{N,1}(\rN^2)}^2 \md \boldsymbol{\kappa} \\
			  & \lesssim_N \varepsilon^4 \left\| \boldsymbol{\Gamma} (\varepsilon s) \right\|_{W^{N,1}(\rN^2)}^2.
	\end{aligned}
\end{equation}
Also combining \eqref{I1Mid} and \eqref{I2} we have
\begin{equation}
	\begin{aligned}
		\|H_2(\varepsilon s)\|_{L^2(\rN^2)}^2 &= \sum_{b = +,-} \int_{\mathbb{B}_h} \left| \mathds{1}_{\{\varepsilon \le |\bk-\bK| <k_0\}} \chi_D(E_b(\bk)) \langle \Phi_b(\bk),F_1(\varepsilon s)\rangle_{L^2(\rN^2)} \right|^2 \md \bk \\
			 & \le \sum_{b = +,-} \int_{\mathbb{B}_h} \left| \mathds{1}_{\{\varepsilon \le |\bk-\bK| <k_0\}} (I_{1,b}+I_{2,b}) \right|^2 \md \bk \\
			 & \lesssim_N \int_{\varepsilon \le |\boldsymbol{\kappa}| < k_0} \left(\varepsilon^{2N}+ \frac{\varepsilon^{2N}}{|\boldsymbol{\kappa}|^{2N-2}} \right) \|\boldsymbol{\Gamma}(\varepsilon s)\|_{W^{N,1}(\mathbb{R}^2)}^2 \md \boldsymbol{\kappa} \\
			 &\lesssim_N \varepsilon^{2N} \|\boldsymbol{\Gamma}(\varepsilon s)\|_{W^{N,1}(\mathbb{R}^2)}^2 + \varepsilon^{2N}\|\boldsymbol{\Gamma}(\varepsilon s)\|_{W^{N,1}(\mathbb{R}^2)}^2\int_\varepsilon^{k_0}  \frac{1}{r^{2N-3}} \md r \\
			 &\lesssim_N \varepsilon^4 \|\boldsymbol{\Gamma}(\varepsilon s)\|_{W^{N,1}(\mathbb{R}^2)}^2,
	\end{aligned}
\end{equation}
provided we choose $N\ge 3$ (because only when $N \ge 3$ can \eqref{I2} be valid). The proof of \Cref{PD0F1Bound}, and hence \Cref{DiracPartBound} is thus complete.

\section{The Singularity at 0}
\label{SecTS0}

In this section we deal with the case $V \equiv 0$ and prove \Cref{MainTheorem2}. In \Cref{SecDFEE}, a basic building block of the theorem is the uniform-in-time boundedness of the wave kernel, which fails in this case, where the treatment will produce an extra linear growth in $t$. We develop another approach for this case. 

According to the notations introduced in \Cref{SecDPSO}, the Bloch mode corresponding to the lowest band should be denoted by $\Phi_1(\by;\bk)$, which unfortunately conflicts with the $E_D$-eigenfunction described in \Cref{DefDiracPoint}. To avoid this clash of notations, throughout and only in this section, \textbf{we use} $\Phi_g(\by;\bk)$ (``$g$" refers to ``ground") \textbf{to indicate the lowest band Bloch mode}. In case of $V \equiv 0$, we have that
\begin{equation*}
	E_b(\bk)=0 \Longleftrightarrow b=1 \text{ and } \bk=0, \Phi_g(\bx;\mathbf{0}) \equiv c.
\end{equation*}
Hence, we can choose a $E_0 < E_D-2\delta$ such that
\begin{equation}
	E_b(\bk) < E_0 \Longrightarrow |\bk-\bK|\ge k_0 \text{ and } b=1.
\end{equation}
Also, define the wave kernel $k(\lambda,t) = \frac{\sin(\sqrt{\lambda}t)}{\sqrt{\lambda}},\lambda>0$ and $k(0,t)=t$.

We still denote the error by $\eta = \varphi - \varphi_{\mathrm{eff}}$, where $\varphi$ is the exact solution to \eqref{ConcernedWaveEquation} and $\varphi_{\mathrm{eff}}$ is the approximate solution given by \eqref{TheAnsatzMR}. The envelop $\boldsymbol{\alpha}$ is now solved by \eqref{DiracEquation} subjected to a Schwartz initial datum $\boldsymbol{\alpha}_0 \in \mathcal{S}$, which is not essential for the theorem itself, but  only simplifies the proof and notation. The equation satisfied by $\eta$ is still \eqref{ErrorEquation}.

\subsection{Separation of the low and high parts}
 Our strategy here is again to decompose the whole system. To do so, we again define our spectral projections, but now the classical ones:
\begin{equation*}
	\Pi_0 := \mathds{1}_{[0,E_0)}\left(\mL^{(0)} \right),\quad \Pi^\bracketeps:= \mathds{1}_{[0,E_0/2)}\left( \mL^\bracketeps \right).
\end{equation*}
These projections help us separate the system into completely decoupled parts, a feature not shared by the ``regularized" ones.

We can then decompose that
\begin{equation*}
	\eta(t) = \Pi^\bracketeps \eta(t) + \left(\Id - \Pi^\bracketeps \right)\eta(t).
\end{equation*}
For the high energy part $\left(\Id - \Pi^\bracketeps \right)\eta(t)$, an exactly the same argument as \Cref{SecDFEE,SecCR} applies and gives the estimate
\begin{equation}\label{BoundOnHighEnergyError}
	\sup_{t \in [0,\rho \varepsilon^{-1}]}\left\| \left(\Id - \Pi^\bracketeps \right)\eta(t) \right\|_{H^{s}} \lesssim_{s,\rho,\boldsymbol{\alpha}_0}  \varepsilon.
\end{equation}

The estimate of the low energy part is more delicate and takes over the main body of this section. Using the spectral cut-off, we can focus on estimating the $L^2$-norm $\left \| \Pi^\bracketeps \eta \right\|_{L^2}$. The error equation is
\begin{equation}\label{LowEnergyErrorEquation}
	\left\{
\begin{aligned}
		\frac{\partial^2 \Pi^\bracketeps\eta}{\partial t^2} + \mL^{(\varepsilon)}\Pi^\bracketeps\eta &= \me^{\mi \sqrt{E_D}t}\Pi^\bracketeps F(\varepsilon t), \\
	\Pi^\bracketeps\eta(\by,0) & = 0, \\
	\frac{\partial \Pi^\bracketeps\eta}{\partial t}(\by,0) &= \Pi^\bracketeps F_0(\by),
\end{aligned}\right.
\end{equation}
The reason of grouping $F=F_1+F_2$ is that they are now at the same order in $\varepsilon$:
\begin{proposition}\label{eps2BoundOnLowEnergyF}
	For every $\rho>0$ and sufficiently small $\varepsilon >0$, there exists a $C_{\rho,\boldsymbol{\alpha}_0}>0$ such that
	\begin{equation}
		\sup_{0\le t\le \rho\varepsilon^{-1}}\left\| \Pi^\bracketeps \partial_T^k F_j(\varepsilon t) \right\|_{L^2} \le C_{\rho,\boldsymbol{\alpha}_0} \varepsilon^2, \quad j,k=0,1,2.
	\end{equation}
\end{proposition}
The proof is a combination of the following two lemmas.

\begin{lemma}
	There exists a constant $C>0$, such that for arbitrary $f \in L^2$ and small enough $\varepsilon>0$, we have
	\begin{equation}
		\| \Pi^\bracketeps f \|_{L^2} \le \| \Pi_0 f\|_{L^2} + C\varepsilon \|f\|_{L^2}.
	\end{equation}
\end{lemma}
\begin{proof}
	We use a smooth switch function to interpolate between $\mathds{1}_{[0,E_0)}$ and $\mathds{1}_{[0,E_0/2)}$. Let $\chi$ be a smooth bump function that equals to $1$ on $[0,E_0/2]$ and is supported in $(-1,E_0)$. Then since both $\mathds{1}_{[0,E_0)} - \chi$ and $\chi - \mathds{1}_{[0,E_0/2)}$ are non-negative (on the spectrum of $\mL^{(\varepsilon)}$), we see that for all $f \in L^2$,
	\begin{equation*}
		\left \langle f, \left[\Pi_0 - \chi\left(\mL^{(0)}\right)\right]f \right \rangle_{L^2} \ge 0 \Longrightarrow \left\|\Pi_0 f \right\|_{L^2} \ge \left\|\chi\left(\mL^{(0)}\right) f \right\|_{L^2}.
	\end{equation*}
	Similarly, 
	\begin{equation*}
		\left\|\Pi^{(\varepsilon)} f \right\|_{L^2} \le \left\|\chi\left(\mL^{(\varepsilon)}\right) f \right\|_{L^2}.
	\end{equation*}
Copying the proof of \Cref{ConvergenceOfSpectralProjections}, we bound the difference between $\left\|\chi\left(\mL^{(0)}\right) f \right\|_{L^2}$ and $\left\|\chi\left(\mL^{(\varepsilon)}\right) f \right\|_{L^2}$ by $C\varepsilon\|f\|_{L^2}$, which is the desired result.
\end{proof}

\begin{lemma}
	For sufficiently small $\varepsilon >0$ and each $n \in \nN$, there exists a $C$ depending only on $\boldsymbol{\alpha}_0,\rho$ and $n$, such that
	\begin{equation*}
		\left\| \Pi_0 \partial_T^k F_j(\varepsilon t) \right\|_{L^2} \le C\varepsilon^n, \quad j,k=0,1,2, \quad  \forall t \le \rho \varepsilon^{-1}.
	\end{equation*}
\end{lemma}
\begin{proof}
	A Floquet-Bloch decomposition holds
\begin{equation*}
	\Pi_0 \partial_T^kF_j(\by,\varepsilon t) = \frac{1}{|\mathbb{B}_h|} \int_{|\bk-\bK|\ge k_0} \mathds{1}_{[0,E_0)}(E_1(\bk)) \left\langle \Phi_g(\bk),\left[ \partial_T^kF_j(\varepsilon s)\right] \right\rangle_{L^2(\rN^2)}\Phi_g(\by;\bk) \md \bk,
\end{equation*}
whose proof is just an imitation of the proof of \Cref{FunctionalCalculusOfFBDecomposition}. Also imitating \Cref{FarMomentaDecay}, we can show that uniformly in $|\bk-\bK|\ge k_0$, 
\begin{equation*}
	\left|\left\langle \Phi_g(\bk),\left[ \partial_T^kF_j(\varepsilon s)\right] \right\rangle_{L^2(\rN^2)} \right| \sim O\left( \varepsilon^\infty \right),
\end{equation*}
which yields the conclusion of the lemma.
\end{proof}

\subsection{Estimating the low energy part}
We solve \eqref{LowEnergyErrorEquation} by means of functional calculus and Duhamel's principle
\begin{equation}\label{LowEnergyInitialVelocityBound}
	\Pi^\bracketeps\eta(t)
	= k(\mL^{(\varepsilon)},t)\Pi^\bracketeps F_0
	+ \int_0^t
	k(\mL^{(\varepsilon)},t-s)
	\me^{\mi\sqrt{E_D}s}\Pi^\bracketeps F(\varepsilon s)\,\md s.
\end{equation}
Here the function $\lambda\mapsto \sin(t\sqrt{\lambda})/\sqrt{\lambda}$ is understood by its continuous extension at $\lambda=0$. The first term is handled easily
\begin{equation*}
	\left\|k(\mL^{(\varepsilon)},t)\Pi^\bracketeps F_0 \right\| \lesssim \varepsilon^2 t,
\end{equation*}
for some constant $C$ depending only on $\boldsymbol{\alpha}_0$. Here and below in this paragraph, the norm is the $L^2$-norm.

In the second term, we cannot directly bound $k(\mL^{(\varepsilon)},t-s)$ by $t-s$, for this will produce a quadratic growth in $t$. The role of \Cref{BoundOnLowEnergySourceTerm} is to replace the direct estimate of the low-energy wave kernel. If one bounds
\[
k(\mL^{(\varepsilon)},t-s)=\frac{\sin((t-s)\sqrt{\mL^{(\varepsilon)}})}{\sqrt{\mL^{(\varepsilon)}}}
\]
directly by \(t-s\), the integral produces a quadratic growth in time, which is too large on the scale \(t=O(\varepsilon^{-1})\). This growth is not intrinsic. To see the underlying cancellation, suppose for a moment that the source is independent of time and restrict the spectral variable to a fixed value \(\lambda\). Then the scalar integral
\[
\int_0^t \frac{\sin((t-s)\sqrt{\lambda})}{\sqrt{\lambda}} e^{\mi\sqrt{E_D}s}\,\md s
=
\frac{
e^{i\sqrt{E_D}t}
-\cos(t\sqrt{\lambda})
-\mi\sqrt{E_D}\dfrac{\sin(t\sqrt{\lambda})}{\sqrt{\lambda}}
}{\lambda-E_D}.
\]
can be computed explicitly. The result has no singularity at \(\lambda=0\), and the growth in $t$ is linear. The apparent singularity of the wave kernel is cancelled by its interaction with the oscillatory factor \(e^{i\sqrt{E_D}s}\). Thus the correct estimate should not be obtained by taking absolute values inside the time integral, but rather by exploiting this cancellation between the wave propagator and the time oscillation. Instead of carrying out this scalar cancellation directly in the spectral representation, we introduce an auxiliary low-energy response, defined in the proof below. This response captures the leading low-energy behavior after the cancellation, and the remaining terms arise only from the slow time dependence of the source. The resulting bound is displayed in the following proposition.

\begin{proposition}\label{BoundOnLowEnergySourceTerm}
	For sufficiently small $\varepsilon > 0$ and $t \le \rho \varepsilon^{-1}$ , there exists a constant $C$, depending only on $\rho$, such that
	\begin{equation*}
		\begin{aligned}
			\Bigg \| \int_0^t
	&k(\mL^{(\varepsilon)},t-s)
	\me^{\mi\sqrt{E_D}s}\Pi^\bracketeps F(\varepsilon s)\,\md s \Bigg \| \\
	&\le C \left( t\sup_{T\le \rho}\left\| \Pi^\bracketeps F(T) \right\| + \varepsilon t^2 \sup_{T\le \rho}\left\| \Pi^\bracketeps \partial_T F(T) \right\| + \varepsilon^2t^2 \sup_{T\le \rho}\left\| \Pi^\bracketeps \partial^2_T F(T) \right\| \right).
		\end{aligned}
	\end{equation*}
\end{proposition}
Combining \Cref{eps2BoundOnLowEnergyF}, \Cref{BoundOnLowEnergySourceTerm}, and \eqref{LowEnergyInitialVelocityBound}, we obtain the bound on the low energy error
\begin{equation*}
	\sup_{t \in [0,\rho\varepsilon^{-1}]} \left\|\Pi^\bracketeps\eta(t)\right\| \lesssim_{\boldsymbol{\alpha}_0} \sup_{t \in [0,\rho\varepsilon^{-1}]}\left(\varepsilon^2 t + \varepsilon^3 t^2 + \varepsilon^4t^2 \right) \lesssim_{\boldsymbol{\alpha}_0,\rho} \varepsilon, 
\end{equation*}
Together with the energy estimate \eqref{BoundOnHighEnergyError}, we complete the proof of \Cref{MainTheorem2}.

\subsection{Proof of \Cref{BoundOnLowEnergySourceTerm}} \label{SubsecProofOfBoundOnLowEnergySourceTerm}
The key is to introduce the auxiliary low-energy response $w$. Define
\begin{equation*}
	w(t) := \me^{\mi \sqrt{E_D} t} \left( \mL^\bracketeps - E_D \right)^{-1}\Pi^\bracketeps F(\varepsilon t),
\end{equation*}
where $\left( \mL^\bracketeps - E_D \right)^{-1}$ is considered as a map $\operatorname{Ran}\Pi^\bracketeps \to \operatorname{Ran}\Pi^\bracketeps$, and is bounded uniformly in $\varepsilon$. For each fixed value of the slow time $T$, replacing $F(\varepsilon t)$ by $F(T)$ in this expression gives the time-harmonic particular solution associated with the projected forcing $\me^{\mi\sqrt{E_D}t}\Pi^\bracketeps F(T)$. The substitution $T=\varepsilon t$ introduces only the slow-time derivative terms displayed below. Indeed, we compute that
\begin{equation*}
\begin{aligned}
	\left( \partial_t^2 + \mL^\bracketeps \right) w(t) &= \me^{\mi \sqrt{E_D} t} \Pi^\bracketeps F(\varepsilon t) \\
			& \quad + 2\mi \sqrt{E_D} \varepsilon \me^{\mi \sqrt{E_D}t} \left( \mL^\bracketeps - E_D \right)^{-1}\Pi^\bracketeps \partial_T F(\varepsilon t) \\
			& \quad + \varepsilon^2 \me^{\mi \sqrt{E_D}t} \left( \mL^\bracketeps - E_D \right)^{-1}\Pi^\bracketeps \partial_T^2 F(\varepsilon t),
\end{aligned}
\end{equation*}
and hence
\begin{equation*}
	\me^{\mi \sqrt{E_D} t} \Pi^\bracketeps F(\varepsilon t) =\left( \partial_t^2 + \mL^\bracketeps \right) w(t) + K_1(t).
\end{equation*}
where $K_1(t)$ satisfies
\begin{equation*}
	\| K_1(t) \| \lesssim \varepsilon \left\| \Pi^\bracketeps \partial_T F(T) \right\| + \varepsilon^2 \left\| \Pi^\bracketeps \partial_T^2 F(T) \right\| \text{ for } t \le \rho \varepsilon^{-1}.
\end{equation*}

Applying the wave propagator and integrating on $[0,t]$, we have
\begin{equation*}
	\int_0^t k(\mL^{(\varepsilon)},t-s) \me^{\mi\sqrt{E_D}s}\Pi^\bracketeps F(\varepsilon s)\,\md s = \int_0^t k(\mL^{(\varepsilon)},t-s) \left( \partial_s^2 + \mL^\bracketeps \right) w(s) \md s+ I_1(t),
\end{equation*}
with
\begin{equation}\label{I1Bound}
	\| I_1(t) \| \lesssim \varepsilon t^2 \sup_{T\le \rho}\left\| \Pi^\bracketeps \partial_T F(T) \right\| + \varepsilon^2 t^2 \sup_{T\le \rho}\left\| \Pi^\bracketeps \partial_T^2 F(T) \right\| \quad t \le \rho \varepsilon^{-1}.
\end{equation}

The integral on the right hand side is the source part of the solution to
\begin{equation*}
	\left\{ \begin{aligned}
		\left( \partial_s^2 + \mL^\bracketeps \right) w(s) &= \left( \partial_s^2 + \mL^\bracketeps \right) w(s) \\
		w(0) &=w(0) \\
		\partial_s w(0) &= \partial_s w(0),
	\end{aligned}\right.
\end{equation*}
so the Duhamel principle gives
\begin{equation*}
	w(t) = \int_0^t k(\mL^{(\varepsilon)},t-s) \left( \partial_s^2 + \mL^\bracketeps \right) w(s) \md s + \cos\left(t\sqrt{\mL^\bracketeps}\right)w(0) + \frac{\sin\left(t\sqrt{\mL^\bracketeps}\right)}{\sqrt{\mL^\bracketeps}}\partial_sw(0).
\end{equation*}
Consequently,
\begin{equation*}
	\begin{aligned}
		\int_0^t &k(\mL^{(\varepsilon)},t-s) \left( \partial_s^2 + \mL^\bracketeps \right) w(s) \md s \\
		&= \me^{\mi \sqrt{E_D} t} \left( \mL^\bracketeps - E_D \right)^{-1}\Pi^\bracketeps F(\varepsilon t) \\
		&\quad - \cos \left( t \sqrt{\mL^\bracketeps} \right) \left( \mL^\bracketeps - E_D \right)^{-1}\Pi^\bracketeps F(0) \\
		&\quad - \frac{\sin\left(t\sqrt{\mL^\bracketeps}\right)}{\sqrt{\mL^\bracketeps}} \left( \mi \sqrt{E_D} \left( \mL^\bracketeps - E_D \right)^{-1}\Pi^\bracketeps F(0) + \varepsilon \left( \mL^\bracketeps - E_D \right)^{-1}\Pi^\bracketeps \partial_T F(0) \right).
	\end{aligned}
\end{equation*}
Finally, directly take $L^2$-norm the both sides of the above equation, and together with \eqref{I1Bound}, we obtain
\begin{equation*}
		\Bigg\| \int_0^t k(\mL^{(\varepsilon)},t-s) \left( \partial_s^2 + \mL^\bracketeps \right) w(s) \md s \Bigg\| 
						\lesssim t\sup_{T\le \rho} \left\| \Pi^\bracketeps F(T) \right \| + \varepsilon t \sup_{T\le \rho} \left\| \Pi^\bracketeps \partial_T F(T) \right \|,
\end{equation*}
for $t \le \rho \varepsilon^{-1}$. This proves \Cref{BoundOnLowEnergySourceTerm}.

\section{Discussions on the Dirac Equation}
\label{SecDDE}
In this section, we discuss some physical intuitions about the Dirac equation \eqref{DiracEquation}, with the help of numerical simulations. Recall that the effective dynamics of the wave-packet is
\begin{equation*}
	\left\{\begin{aligned}
		2\mi \sqrt{E_D}\partial_T\alpha_1 &= \nu_{_F}(\mi \partial_{Y_1}\alpha_2 - \partial_{Y_2}\alpha_2) \\&\quad - \mu\left(\operatorname{Tr}(U\sigma_3)-\mi\operatorname{Tr}(U\sigma_1) \right)\alpha_2 - \left( \xi \operatorname{Tr}(U\sigma_0) + \xi^\# \operatorname{Tr}(U\sigma_2) \right)\alpha_1, \\
		2\mi \sqrt{E_D}\partial_T\alpha_2 &= \nu_{_F}(\mi \partial_{Y_1}\alpha_1 + \partial_{Y_2}\alpha_1) \\&\quad - \bar{\mu}\left(\operatorname{Tr}(U\sigma_3)+\mi\operatorname{Tr}(U\sigma_1) \right)\alpha_1 - \left( \xi \operatorname{Tr}(U\sigma_0) + \xi^\# \operatorname{Tr}(U\sigma_2) \right)\alpha_2, \\
		\alpha_1(\bY,0)&=\alpha_{10}(\bY), \quad\alpha_2(\bY,0)=\alpha_{20}(\bY),
	\end{aligned}\right.
\end{equation*}
which is a generalized form of the Schr\"odinger equation with non-trivial gauge fields. When $\bu \equiv C$, this reduces to the massless Dirac equation \cite{fefferman2014wave}.

Recall from \eqref{DefOfMu} that $\mu$ is generally complex, depending on the phases of $\Phi_1$ and $\Phi_2$. We remark that utilizing another degree of freedom allows one to make both $\nu_{_F}$ and $\mu$ real simultaneously; see \cite{guglielmon2021landau}. When $\mu \in \rN$, the above equation simplifies to
\begin{equation}
\label{SimplifiedDirac}
	\mi\partial_T \boldsymbol{\alpha} = v\left[ (\mi \partial_{Y_1} + A_1)\sigma_1 + (-\mi\partial_{Y_2} - A_2)\sigma_2\right] \boldsymbol{\alpha} + W_{\mathrm{eff}}\sigma_0 \boldsymbol{\alpha},
\end{equation}
where
\begin{equation}
\label{DefPseudoElectromagneticFields}
\begin{aligned}
	v &= \frac{\nu_{_F}}{2\sqrt{E_D}}, \\
	W_{\mathrm{eff}} &= -\frac{1}{2\sqrt{E_D}} \left(\xi \operatorname{Tr}(U\sigma_0) + \xi^\#\operatorname{Tr}(U\sigma_2)\right), \\
	A_1 &= - \frac{\mu}{\nu_{_F}}\mathrm{Tr}(U\sigma_3), \\
	A_2 &=  \frac{\mu}{\nu_{_F}}\mathrm{Tr}(U\sigma_1).
\end{aligned}
\end{equation}
The Hamiltonian
\begin{equation}
\label{OriginalHamiltonian}
	\mathcal{H}:=v\left[ (\mi \partial_{Y_1} + A_1)\sigma_1 + (-\mi\partial_{Y_2} - A_2)\sigma_2\right] + W_{\mathrm{eff}}\sigma_0
\end{equation}
is similar to what is found in \cite{guglielmon2021landau}, apart from a different gauge and a different choice of $U$. The Hamiltonian \eqref{OriginalHamiltonian} is unitarily equivalent, through conjugation by $\sigma_2$, to the following standard form of the Dirac operator
\begin{equation}
\label{StandardHamiltonian}
\begin{aligned}
	\mathcal{D} &= v\left[ (-\mi \partial_{Y_1} - A_1)\sigma_1 + (-\mi\partial_{Y_2} - A_2)\sigma_2\right] + W_{\mathrm{eff}}\sigma_0 \\
				&= v\left[ (\frac{1}{\mi}\nabla_\bY - \mathbf{A}_{\mathrm{eff}})\cdot \boldsymbol{\sigma}\right] + W_{\mathrm{eff}}\sigma_0,
\end{aligned}
\end{equation}
with $\mathbf{A}_{\mathrm{eff}} = (A_1,A_2)$, $\boldsymbol{\sigma}=(\sigma_1,\sigma_2)$. This $\mathcal{D}$ models the behavior of a two-component massless fermion under an external magnetic field (associated with the gauge potential $\mathbf{A}_{\mathrm{eff}}$) and a scalar potential $W_{\mathrm{eff}}$. Here $W_{\mathrm{eff}}$ acts as an electrostatic (or chemical-potential) shift rather than a mass term, since a genuine mass would require a $\sigma_3$-type coupling. 

\begin{remark}
	Honeycomb-based structures provide model platforms for topological materials. Under perturbations, different topological indices are encoded in the Dirac eigenmodes. When the two sides of an interface are subjected to topologically opposite perturbations, this index difference can guarantee a topologically protected edge state localized near the prescribed edge \cite{fefferman2016edge}. This is the principle of the so-called ``bulk-edge correspondence", discussed in numerous literatures such as \cite{drouot2021bulkedge}. As linearized systems, the Dirac equations also serve as convenient models for rigorous analysis of bulk-edge correspondence \cite{bal2019continuous}, hence are of great interest. 
	
	In \cite{xie2019wave}, the authors obtain a Dirac equation with varying mass term of the form $\kappa(\bX)\sigma_3$. Non-zero mass terms can generally open gaps in the spectrum and carry topological phases \cite{bal2019topological}, which help us predict topologically protected edge states. In contrast, the system obtained here is massless. Nevertheless, it is still possible for our model to admit spectral gaps, due to the presence of the gauge potential. For example, the Landau quantization, see below.	
\end{remark}

\subsection{Strain-induced ``Landau levels"}
Furthermore, assume also the material weight $A$ is isotropic so that $\xi^\#=0$, and choose a special $\bu$ to make $\operatorname{Tr}(U\sigma_0) = \nabla \cdot \bu = 0$. These shall result in the most interested case $W_{\mathrm{eff}}=0$. Then we square \eqref{StandardHamiltonian} to find
\begin{equation*}
	\mathcal{D}^2 = v^2\left( \left( \frac{1}{\mi}\nabla_\bY - \mathbf{A}_{\mathrm{eff}} \right)^2\sigma_0 + (\partial_{Y_2}A_1 - \partial_{Y_1}A_2) \sigma_3 \right).
\end{equation*}

Formally putting $\bu(\bY) = (0,\beta Y_1^2)$ and assuming $B_0:=2\mu\beta/\nu_{_F}\ne0$, we can compute by \eqref{DefPseudoElectromagneticFields} that $A_1 = 0, A_2 = B_0 Y_1$, and hence
\begin{equation}
	\mathcal{D}^2 = v^2\left[-\partial_{Y_1}^2\sigma_0 + \left(\frac{1}{\mi} \partial_{Y_2} - B_0Y_1 \right)^2 \sigma_0  -B_0\sigma_3 \right] =v^2 \left( \mathcal{H}_\text{mag}\sigma_0 - B_0\sigma_3 \right),
\end{equation}
where $\mathcal{H}_\text{mag} = -\partial_{Y_1}^2+ \left(\frac{1}{\mi} \partial_{Y_2} - B_0Y_1 \right)^2 $ is a Schr\"odinger operator with constant magnetic field, admitting the so-called Landau levels \cite{landau1977quantum}: $E_n = (2n+1)|B_0|,n=0,1,\dots$, with the corresponding modes normalized along the $Y_1$ direction
 \begin{equation}
\begin{gathered}
\psi_{n,k}(Y_1,Y_2) = \frac{|B_0|^{1/4}}{\pi^{1/4}\sqrt{2^n n!}}e^{\mathrm i k Y_2}\,H_n(\xi)\,e^{-\xi^2/2}, \\
\xi=\sqrt{|B_0|}\!\left(Y_1-\frac{k}{B_0}\right), \quad H_n \text{ is the Hermite polynomials}.
\end{gathered}
\end{equation}

Consequently, $\mathcal{D}$ admits infinitely degenerate levels $E_{n,\pm}=\pm v\sqrt{2n |B_0|}$ for $n=1,2,\dots$ and $E_0 =0$, corresponding to the eigenmodes 
\begin{equation}
\begin{aligned}
	B_0 > 0:\quad \Psi_{n,k,\pm} = \frac{1}{\sqrt{2}} \begin{pmatrix}
		\mi\psi_{n,k} \\
		\pm\psi_{n-1,k}
	\end{pmatrix}, \quad n=1,2,\dots \quad \Psi_{0,k} = \begin{pmatrix}
		\psi_{0,k} \\
		0
	\end{pmatrix}, \\
	B_0 < 0:\quad \Psi_{n,k,\pm} = \frac{1}{\sqrt{2}} \begin{pmatrix}
		\pm\psi_{n-1,k}\\
		\mi\psi_{n,k}
	\end{pmatrix}, \quad n=1,2,\dots \quad \Psi_{0,k} = \begin{pmatrix}
		0 \\
		\psi_{0,k}
	\end{pmatrix}.
\end{aligned}
\end{equation}
\Cref{fig:LandauLevels,fig:LandauEigenmodes} illustrate some of the levels and some eigenmodes. For a fixed sign and $n \in \nN$, any $L^2$-superposition of $\Psi_{n,k,\pm}$ over $k \in \rN$ is an $E_{n,\pm}$-eigenfunction of $\mathcal{D}$. Hence, the density of state at these energy levels are very high.
\begin{figure}[t]
		\centering
 	\includegraphics[width=0.5\linewidth]{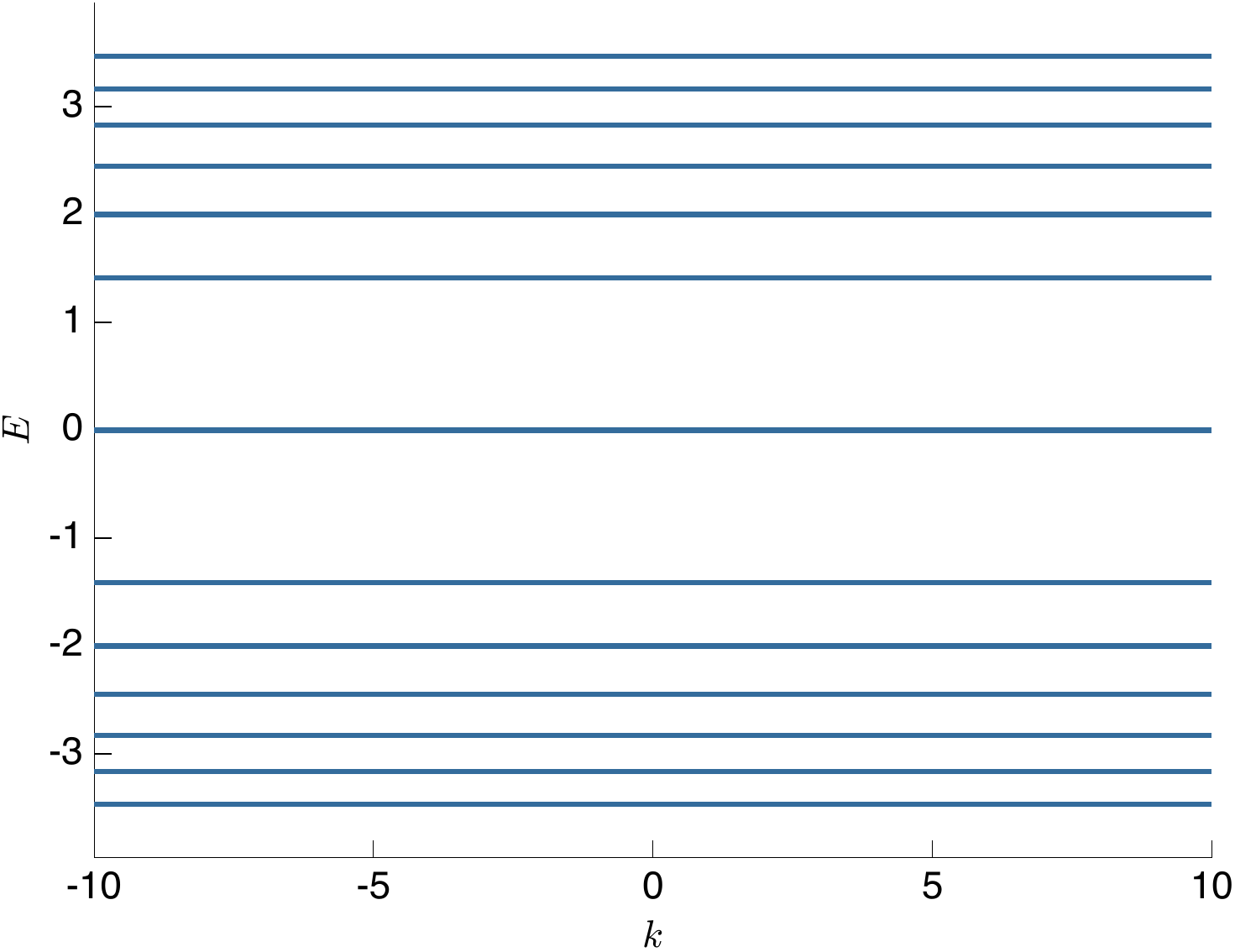}
 	\caption{A schematic of Landau levels $E_{n,\pm}=\pm v\sqrt{2n |B_0|}$ for $v=B_0=1$ and $|n|\le 6$. The solid curves are the energy-momentum curves $E(k)$. Draw a horizontal line for a fixed $E$. If $E=E_{n,\pm}$, then it meets uncountably many points on the solid curves. So the density of state is highly concentrated on these Landau levels}
 	\label{fig:LandauLevels}
\end{figure}
\begin{figure}[t]
	\centering
	\includegraphics[width=\linewidth]{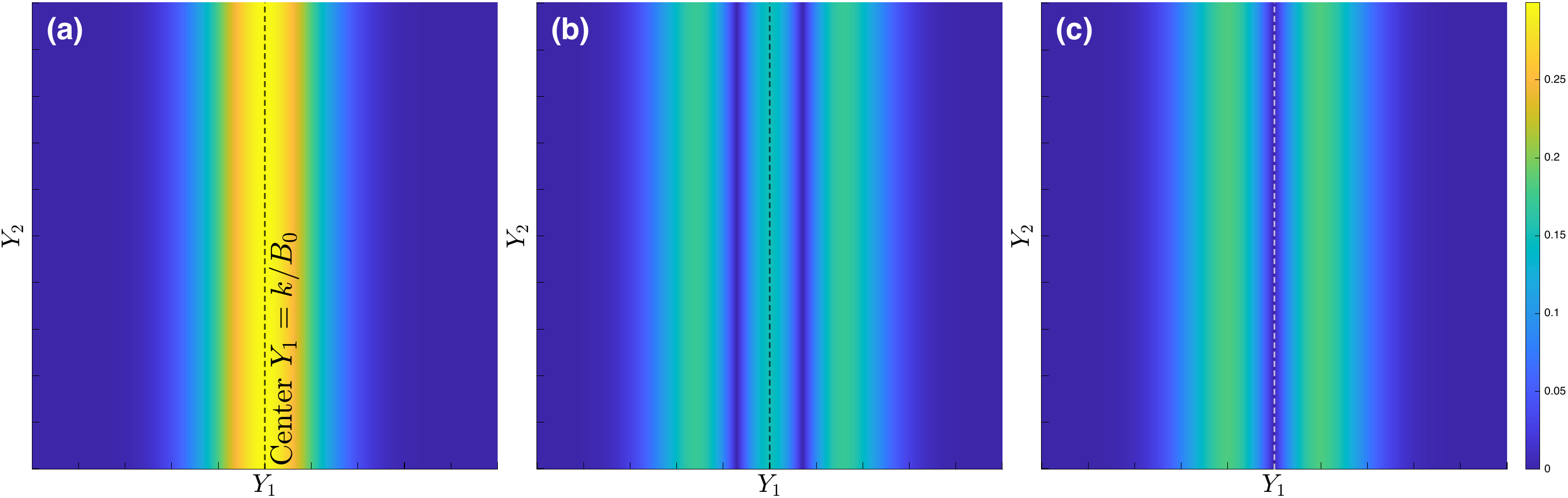}
	\caption{Plots of absolute values of eigenmodes corresponding to Landau levels when $B_0=1$. \textbf{(a)} is for the upper component of $\Psi_{0,k}$, which is $\psi_{0,k}$, while its lower one is $0$. The momentum $k$ only affects the center where it concentrates, and which is labeled in dashed line. \textbf{(b)} and \textbf{(c)} are the upper and lower components of $\Psi_{2,k}$, respectively}
	\label{fig:LandauEigenmodes}
\end{figure}

However, this choice of $\bu$ violates our assumption that $D_\bY\bu \in C_b^\infty$. Despite this, one can still approximate a constant magnetic field on any prescribed finite region by choosing a smooth, compactly supported deformation that agrees with the usual linear gauge inside that region and smoothly vanishes outside. This local approximation is physically natural since experimentally realizable magnetic fields are always of finite spatial extent. The resulting degeneracies are, of course, no longer infinite, but can still be made large.

\subsection{Simulations of Dynamics}
Finally, we simulate the evolutionary equation
\begin{equation}
\label{StandardDiracDynamics}
    \mi \partial_T \boldsymbol{\alpha} = \mathcal{D}\boldsymbol{\alpha}
\end{equation}
for Dirac operators \eqref{StandardHamiltonian}. This equation is unitarily equivalent to \eqref{SimplifiedDirac}.

\paragraph{Landau eigenfunction.} First, put $\bu(\bY) = (0,\beta Y_1^2\chi(Y_1))$ for some huge cut-off $\chi$, so we still have $A_1 = 0, A_2 = B_0Y_1$ near $\mathbf{0}$. Assume $B_0=1$ and take the initial value
\begin{equation}
\label{LandauInitialData}
    \boldsymbol{\alpha}(0) = \begin{pmatrix}
        \displaystyle c\int_\rN G(k)\psi_{0,k}\md k \\
        0
    \end{pmatrix}, \text{ where } G(k) = \me^{-\frac{(k-k_0)^2}{w^2}},
\end{equation}
for some constants $k_0$ and $w$, and $c$ is a normalization coefficient. The results are displayed in \Cref{fig:landau_dynamics}.
\begin{figure}[h]
    \centering
    \includegraphics[width=\linewidth]{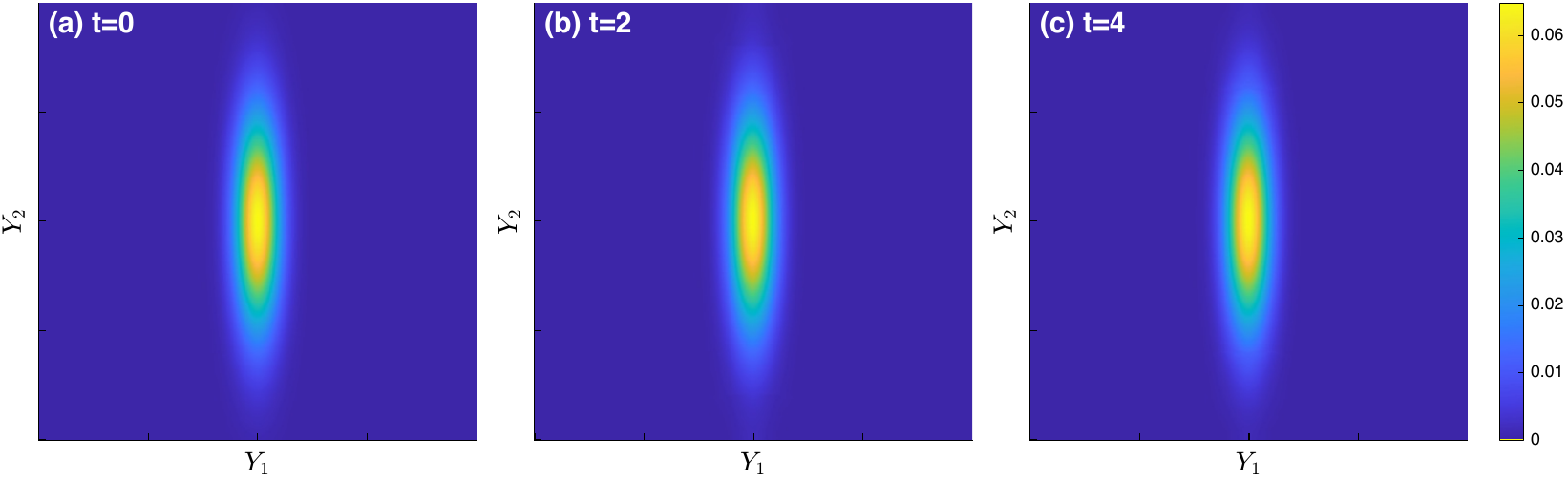}
    \caption{Density distribution of the solution $\boldsymbol{\alpha}$ to \eqref{StandardDiracDynamics} with prescribed $A_1,A_2$, subjected to the initial value \eqref{LandauInitialData} with $k_0 = 0, w= 0.3$. We plot the density of $\boldsymbol{\alpha}(t)$ at $t=0,2,4$ in \textbf{(a)}, \textbf{(b)} and \textbf{(c)}, respectively. The wave-packet remains nearly stationary, in agreement with the fact that the chosen initial value is approximately an eigenfunction of the operator $\mathcal{D}$}
    \label{fig:landau_dynamics}
\end{figure}

\paragraph{Error function}
This time we choose $\bu$ so that 
\begin{equation}
\label{ErfGaugeField}
    A_1=0,\quad A_2 = \mathrm{erf}(Y_1) = \frac{2}{\sqrt{\pi}} \int_0^{Y_1} \me^{-y^2} \md y.
\end{equation}
Then the Dirac operator is given by 
\begin{equation*}
\begin{aligned}
	\mathcal{D}(k) = v\left[ -\mi \partial_{Y_1}\sigma_1 + (-\mi\partial_{Y_2} - \mathrm{erf}(Y_1))\sigma_2\right].
\end{aligned}
\end{equation*}
Fourier transforming $Y_2$ to $k$, we find that
\begin{equation*}
    \mathcal{D} = v\left[ -\mi \partial_{Y_1}\sigma_1 + (k - \mathrm{erf}(Y_1))\sigma_2\right].
\end{equation*}
When $k \in (-1,1)$, this operator has $0$ eigenvalue, whose eigenvector is given by 
\begin{equation*}
\psi_k(Y_1) = 
\begin{pmatrix}
   \displaystyle \exp\left[\int_0^{Y_1} (k-\mathrm{erf}(y))\md y\right]\\
    0
\end{pmatrix}.
\end{equation*}
From this we see that for
\begin{equation}
\label{ErfInitialData}
    \boldsymbol{\alpha}(0) = \int_\rN G(k)\psi_k\md k,
\end{equation}
the wave-packet is also almost stationary, see \Cref{fig:erf_dynamics}.
\begin{figure}[h]
    \centering
    \includegraphics[width=\linewidth]{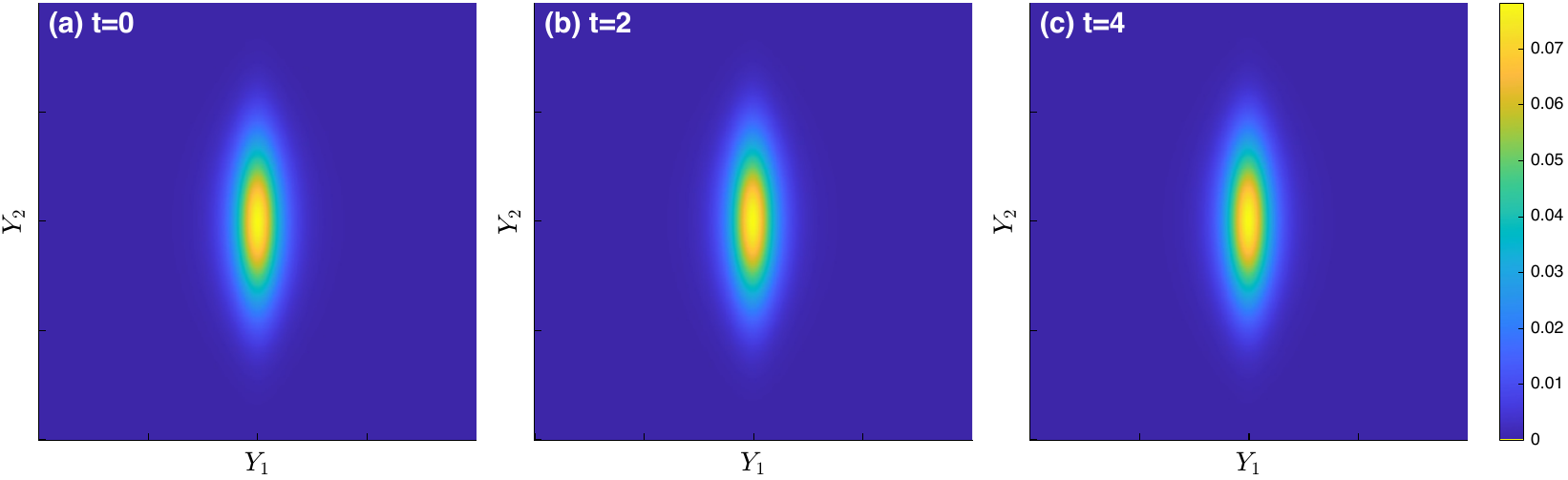}
    \caption{Density distribution of the solution $\boldsymbol{\alpha}$ to \eqref{StandardDiracDynamics} with \eqref{ErfGaugeField}, subjected to the initial value \eqref{ErfInitialData} with $k_0 = 0, w= 0.42$. We plot the density of $\boldsymbol{\alpha}(t)$ at $t=0,2,4$ in \textbf{(a)}, \textbf{(b)} and \textbf{(c)}, respectively. We can see that the wave-packet hardly moves, which agrees with the fact that the initial value we choose is approximately an eigenfunction of the operator $\mathcal{D}$}
    \label{fig:erf_dynamics}
\end{figure}

More generally, we can choose $A_2(Y_1)$ to be any increasing function tending to a negative number at $-\infty$ and a positive number at $+\infty$.

\begin{appendices}

\section{Asymptotic Expansions of the Operator}
\label{AppAEO}
First, we expand $M_\varepsilon(\by) = \left( \Id + \varepsilon U(\varepsilon\by) \right)^{-1}$. Recall the assumption that $U$ is globally bounded. Hence, for sufficiently small $\varepsilon$ we have the Neumann series expansion
\begin{equation}
	M_\varepsilon(\by) = \Id - \varepsilon U(\varepsilon \by) + \sum_{i=2}^\infty \left(-\varepsilon U(\varepsilon \by)\right)^i.
\end{equation}
The summation on the right hand side is a smooth matrix-valued function whose derivatives are bounded by $C\varepsilon^2$. This constant $C$ is independent of $\varepsilon$ and $\by$, but depends on the order of the derivatives.

Using the determinant formula
\begin{equation*}
	\det(\Id + \varepsilon A + \varepsilon^2 B) = 1 + \varepsilon \, \operatorname{Tr}(A) + \varepsilon^2 \left( \operatorname{Tr}(B) + \frac{1}{2} \left[ (\operatorname{Tr}(A))^2 - \operatorname{Tr}(A^2) \right] \right) + O(\varepsilon^3)
\end{equation*}
we obtain an expansion of $J_\varepsilon(\by)= \det(M_\varepsilon(\by))$:
\begin{equation}
\label{ExpansionOfDetJ}
	J_\varepsilon(\by) = 1 - \varepsilon \nabla_\bY \cdot \bu(Y)|_{\bY = \varepsilon \by} + \varepsilon^2 h(\by),
\end{equation}
for some $h$ lying in $C_b^\infty(\rN^2)$ (with bounds independent of $\varepsilon$). We also see that for sufficiently small $\varepsilon$, this determinant is bounded away from $0$, so both $J_\varepsilon(\by)$ and $J_\varepsilon(\by)^{-1}$ lie in $C_b^\infty(\rN^2)$ too.

It follows that, for sufficiently small $\varepsilon>0$, $J_\varepsilon^{-1}M_\varepsilon A M_\varepsilon^T$ is still Hermitian and positively definite. It satisfies the uniformly elliptic condition.
\begin{enumerate}
	\item Given any $\xi \in \cN^2$, we have
	\begin{equation*}
		\left| J_\varepsilon^{-1}\bar{\xi}^TM_\varepsilon A M_\varepsilon^T\xi \right|=\left| J_\varepsilon^{-1}\overline{M_\varepsilon^T\xi}^TA(M_\varepsilon^T\xi) \right| \lesssim \left| M_\varepsilon^T\xi \right|^2 \lesssim |\xi|^2,
	\end{equation*}
	with a constant independent of $\xi,\by$ and $\varepsilon$.
	\item Given any $\xi \in \cN^2\backslash \{0\}$,
	\begin{equation*}
		\left| J_\varepsilon^{-1}\bar{\xi}^TM_\varepsilon A M_\varepsilon^T\xi \right| \gtrsim \left| M_\varepsilon^T\xi \right|^2 \gtrsim |\xi|^2,
	\end{equation*}
	with a constant independent of $\xi,\by$ and $\varepsilon$.
\end{enumerate}
This makes $\mL^{(\varepsilon)}$ a second-order elliptic operator comparable to $-\Delta$.

Next, one obtains the expansion
\begin{equation}
\begin{aligned}
\label{ExpansionOfMepsAMepsT}
	M_\varepsilon(\by)A(\by)M_\varepsilon^T(\by) &= \left( \Id - \varepsilon U(\varepsilon \by) \right)A(\by)\left( \Id - \varepsilon U^T(\varepsilon \by) \right) + O(\varepsilon^2) \\
						 &= A(\by) - \varepsilon \left( U(\varepsilon \by)A(\by) + A(\by)U^T(\varepsilon \by)\right) +\varepsilon^2 r(\by),
\end{aligned}
\end{equation}
where $r \in C_b^\infty(\rN^2;\cN^{2 \times 2})$, with bounds independent of $\varepsilon$.

Now move on to the operator $-J_\varepsilon(\by) \nabla \cdot \frac{M_\varepsilon(\by)A(\by)M_\varepsilon^{T}(\by)}{J_\varepsilon(\by)}\nabla + V(\by)$. Given any $f \in H^2(\rN^2)$, we can expand that
\begin{equation}
\begin{aligned}
\label{FirstDivergenceOperatorExpansion}
		-J_\varepsilon(\by) \nabla \cdot \frac{M_\varepsilon(\by)A(\by)M_\varepsilon^{T}(\by)}{J_\varepsilon(\by)}\nabla f &= -\nabla \cdot (M_\varepsilon A M_\varepsilon^T) \nabla f - J_\varepsilon(\nabla J_\varepsilon^{-1}) \cdot (M_\varepsilon A M_\varepsilon^T \nabla f) \\
					  &=- \nabla \cdot (M_\varepsilon A M_\varepsilon^T) \nabla f + \varepsilon^2 L_r f,
\end{aligned}
\end{equation}
for some first order differential operator $L_r$ whose coefficients are all in $C_b^\infty$, with bounds independent of $\varepsilon$. Here, we used \eqref{ExpansionOfDetJ} to obtain a Neumann expansion of $J_\varepsilon(\by)^{-1}$ and then it is obvious that $\nabla J_\varepsilon^{-1} = O(\varepsilon^2)$ (in the $C_b^\infty$ sense).

Insert \eqref{ExpansionOfMepsAMepsT} into \eqref{FirstDivergenceOperatorExpansion} to get that for all $f \in H^2$:
\begin{equation}
\label{SecondDivergenceOperatorExpansion}
\begin{aligned}
	\bigg[ -J_\varepsilon(\by) &\nabla \cdot \frac{M_\varepsilon(\by)A(\by)M_\varepsilon^{T}(\by)}{J_\varepsilon(\by)}\nabla + V(\by) \bigg] f \\
		  &=- \nabla \cdot (M_\varepsilon A M_\varepsilon^T) \nabla f +V(\by)f+ \varepsilon^2 L_r f \\
		  &= -\nabla \cdot A(\by)\nabla f + V(\by)f \\
		  &\quad + \varepsilon \nabla \cdot \left(U(\varepsilon \by)A(\by) +A(\by)U^T(\varepsilon \by)\right)\nabla f + \varepsilon^2L_r f \\
		  &= \mL^{(0)}f + \varepsilon \nabla \cdot \left(U(\varepsilon \by)A(\by) +A(\by)U^T(\varepsilon \by)\right)\nabla f + \varepsilon^2 L_rf,
\end{aligned}
\end{equation}
where we absorbed $\nabla \cdot r(\by) \nabla$ into $L_r$ and hence now $L_r$ is a second order differential operator whose coefficients are all in $C_b^\infty$,with bounds independent of $\varepsilon$.

In the above expansions, the scales in the $O(\varepsilon)$ term were not completely separated. We need to expand further $\nabla \cdot (AU^T+UA)\nabla f$. We compute that (recall that $\bu = (u_1,u_2)$, and denote $\partial_i f = \partial_{y_i}f(\by), \partial_l a_{ij} = \partial_{y_{_l}}a_{ij}(\by)$ but $\partial_i u_j(\varepsilon \by) = \partial_{Y_i}u_j(\varepsilon \by)$)
\begin{equation}
\label{ExpandAUTUA}
	\begin{aligned}
		\nabla \cdot (A(\by)&U^T(\varepsilon\by) +U(\varepsilon \by)A(\by))\nabla f \\
									   &= \partial_{y_i} \big( a_{ij}\partial_j u_l(\varepsilon\by) \partial_l f + \partial_j u_i(\varepsilon\by) a_{jl}\partial_l f\big) \\
									   &= \partial_{y_i}(a_{ij}\partial_lf)\partial_j u_l(\varepsilon\by) + \varepsilon a_{ij}\partial_lf\partial_{ij}u_l(\varepsilon\by) \\
									   &\quad +\partial_{y_i}(a_{jl}\partial_l f)\partial_j u_i(\varepsilon\by) + \varepsilon a_{jl}\partial_lf\partial_{ij}u_i(\varepsilon\by) \\
									   &= \partial_i a_{ij}\partial_l f \partial_j u_l(\varepsilon\by)  + a_{ij}\partial_{il}f\partial_j u_l(\varepsilon\by) +  \partial_{y_i}(a_{jl}\partial_l f)\partial_j u_i(\varepsilon\by) \\
									   &\quad + \varepsilon a_{ij}\partial_lf\partial_{ij}u_l(\varepsilon\by) + \varepsilon a_{jl}\partial_lf\partial_{ij}u_i(\varepsilon\by) \\
									   &= U^T(\varepsilon \by):(\nabla\cdot A(\by) \otimes \nabla f) + U^T(\varepsilon \by):(A^T\text{Hess }f) + U^T(\varepsilon \by):D(A\nabla f) \\
									   &\quad + \varepsilon a_{ij}\partial_lf\partial_{ij}u_l(\varepsilon\by) + \varepsilon a_{jl}\partial_lf\partial_{ij}u_i(\varepsilon\by).
	\end{aligned}
\end{equation}
Above and below, $a_{ij}$'s are the elements of the matrix $A$ evaluated at $\by$, and $u_i$'s are the components of the deformation $\bu$ evaluated at $\varepsilon \by$, just as $U$ is. We will keep this convention. The $O(1)$ part is exactly $\mathfrak{A}f$ defined by \eqref{CoordinateFreeExpressionFrakA}. To obtain \eqref{SymmetricExpressionFrakA}, we use the expression after the second equality in \eqref{ExpandAUTUA}, followed by a permutation of dummy indices. With this notation, we have
\begin{equation}
	\varepsilon \nabla \cdot (AU^T+UA)\nabla f = \varepsilon \mathrm{Tr}(U\mathfrak{A}f) + \varepsilon ^2\big(a_{ij}\partial_lf\partial_{ij}u_l(\varepsilon\by) +a_{jl}\partial_lf\partial_{ij}u_i(\varepsilon\by) \big).
\end{equation}
Again, absorbing the $O(\varepsilon^2)$ terms into the remainder operator $L_r$ of \eqref{SecondDivergenceOperatorExpansion}, we arrive at
\begin{equation*}
	\widetilde{\mL^{(\varepsilon)}}f (\by) = \mL^{(0)}f + \varepsilon \operatorname{Tr}\left(U(\varepsilon \by)\mathfrak{A}f(\by)\right) + \varepsilon^2 L_r f(\by).
\end{equation*}

Finally, let us move to $\mL^\bracketeps$. By \eqref{ExpansionOfDetJ}, we have
\begin{equation*}
	J_\varepsilon^{1/2}=1-\frac{\varepsilon}{2}\operatorname{Tr}(U(\varepsilon\by))+\varepsilon^2h_1(\by),
	\qquad
	J_\varepsilon^{-1/2}=1+\frac{\varepsilon}{2}\operatorname{Tr}(U(\varepsilon\by))+\varepsilon^2h_2(\by),
\end{equation*}
where $h_1,h_2\in C_b^\infty(\rN^2)$, with bounds independent of $\varepsilon$. Moreover, for $|\alpha|\geq 1$,
\begin{equation*}
	\partial^\alpha_\by J_\varepsilon^{1/2}=O(\varepsilon^2),
	\qquad
	\partial^\alpha_\by J_\varepsilon^{-1/2}=O(\varepsilon^2)
\end{equation*}
in the $C_b^\infty$ sense.

Recall that
\begin{equation*}
	\mL^\bracketeps
		=
			\mathcal J_\varepsilon \widetilde{\mL^\bracketeps}\mathcal J_\varepsilon^{-1},
	\qquad
		\mathcal J_\varepsilon f=J_\varepsilon^{-1/2}f .
\end{equation*}
Hence, for $f\in H^2(\rN^2)$,
\begin{equation*}
	\mL^\bracketeps f
	=
	J_\varepsilon^{-1/2}\widetilde{\mL^\bracketeps}\left(J_\varepsilon^{1/2}f\right).
\end{equation*}
Applying the expansion of $\widetilde{\mL^\bracketeps}$ obtained above, we get
\begin{equation*}
\begin{aligned}
	\mL^\bracketeps f
	&=
	J_\varepsilon^{-1/2}\mL^{(0)}\left(J_\varepsilon^{1/2}f\right)  \\
	&\quad
	+\varepsilon J_\varepsilon^{-1/2}
	\operatorname{Tr}\left(U(\varepsilon\by)\mathfrak{A}\left(J_\varepsilon^{1/2}f\right)\right)
	+\varepsilon^2 J_\varepsilon^{-1/2}L_r\left(J_\varepsilon^{1/2}f\right).
\end{aligned}
\end{equation*}
Since all derivatives of $J_\varepsilon^{1/2}$ and $J_\varepsilon^{-1/2}$ are of order $O(\varepsilon^2)$, commuting these factors with the second-order differential operators only produces $O(\varepsilon^2)$ remainder terms. Therefore,
\begin{equation*}
	J_\varepsilon^{-1/2}\mL^{(0)}\left(J_\varepsilon^{1/2}f\right)
	=
	\mL^{(0)}f+\varepsilon^2L_rf,
\end{equation*}
and
\begin{equation*}
	J_\varepsilon^{-1/2}\mathfrak{A}\left(J_\varepsilon^{1/2}f\right)
	=
	\mathfrak{A}f+\varepsilon^2L_rf.
\end{equation*}
Substituting these two identities into the previous expression and absorbing all higher-order terms into $L_r$, and denoting the resulting operator now by $\mL_r$, we arrive at
\begin{equation*}
	\mL^\bracketeps f(\by)
	=
	\mL^{(0)}f
	+\varepsilon\operatorname{Tr}\left(U(\varepsilon\by)\mathfrak{A}f(\by)\right)
	+\varepsilon^2 \mL_r f(\by),
\end{equation*}
where $\mL_r$ is a second-order differential operator whose coefficients are all in $C_b^\infty$, with bounds independent of $\varepsilon$. This is \eqref{AsymptotcExpansionOfLeps}

\section{Asymptotic Expansion of the Error Equation}
\label{AppAEEE}
To find an equation for $\eta$, we directly plug $ \varphi = \varphi_{\mathrm{eff}}+\eta$ into \eqref{DeformedGoverningEquation} to find that
\begin{equation*}
	\frac{\partial^2 \eta}{\partial t^2} + \mL^\bracketeps \eta =  -\frac{\partial^2 \varphi_{\mathrm{eff}}}{\partial t^2} - \mL^\bracketeps \varphi_{\mathrm{eff}}.
\end{equation*}
We will expand the equation term by terms in the following. First, list the time-derivatives of $\varphi_{\mathrm{eff}}$:
\begin{equation*}
\begin{aligned}
		\frac{\partial \varphi_{\mathrm{eff}}}{\partial t}
		&= \varepsilon \mi \sqrt{E_D}\me^{\mi \sqrt{E_D} t} J_\varepsilon(\by)^{-1/2}\alpha_k \Phi_k  \\
		&\quad + \varepsilon^2\me^{\mi \sqrt{E_D}t} J_\varepsilon(\by)^{-1/2}\partial_T \alpha_k \Phi_k.
\end{aligned}
\end{equation*}
\begin{equation*}
\begin{aligned}
		\frac{\partial^2 \varphi_{\mathrm{eff}}}{\partial t^2}
		&= -\varepsilon E_D \me^{\mi \sqrt{E_D}t} J_\varepsilon(\by)^{-1/2}\alpha_k\Phi_k \\
		&\quad +2\varepsilon^2\mi \sqrt{E_D}\me^{\mi \sqrt{E_D}t} J_\varepsilon(\by)^{-1/2}\partial_T \alpha_k \Phi_k \\
		&\quad +\varepsilon^3 \me^{\mi \sqrt{E_D}t} J_\varepsilon(\by)^{-1/2}\partial^2_T \alpha_k \Phi_k,
\end{aligned} 
\end{equation*}
where $\alpha_k$'s and all its derivatives are evaluated at $(\varepsilon \by,\varepsilon t)$, and $\Phi_k$'s are evaluated at $\by$.

A shorter way to obtain the spatial expansion is to use \eqref{AsymptotcExpansionOfLeps} directly. Put
\begin{equation*}
	\widetilde{\varphi}_{\mathrm{eff}}(\by,t):=J_\varepsilon(\by)^{1/2}\varphi_{\mathrm{eff}}(\by,t)
	=\varepsilon \me^{\mi \sqrt{E_D}t}\alpha_k(\varepsilon\by,\varepsilon t)\Phi_k(\by).
\end{equation*}
Then $\varphi_{\mathrm{eff}}=J_\varepsilon(\by)^{-1/2}\widetilde{\varphi}_{\mathrm{eff}}$. By \eqref{ExpansionOfDetJ}, $J_\varepsilon(\by)^{-1/2}=1+O(\varepsilon)$ and $\partial^\beta_\by J_\varepsilon(\by)^{-1/2}=O(\varepsilon^2)$ for $|\beta|\geq1$. Applying \eqref{AsymptotcExpansionOfLeps} to $J_\varepsilon(\by)^{-1/2}\widetilde{\varphi}_{\mathrm{eff}}$, and absorbing the commutators generated by derivatives of $J_\varepsilon(\by)^{-1/2}$ into the remainder, gives
\begin{equation*}
\begin{aligned}
	\mL^{(\varepsilon)}\varphi_{\mathrm{eff}}
	&=J_\varepsilon(\by)^{-1/2} \mL^{(0)}\widetilde{\varphi}_{\mathrm{eff}}
	+\varepsilon J_\varepsilon(\by)^{-1/2} \operatorname{Tr}\left(U(\varepsilon\by)\mathfrak{A}\widetilde{\varphi}_{\mathrm{eff}}\right)
	+\varepsilon^2 \mL_r\widetilde{\varphi}_{\mathrm{eff}} .
\end{aligned}
\end{equation*}
Now
\begin{equation*}
\begin{aligned}
	\mL^{(0)}\widetilde{\varphi}_{\mathrm{eff}}
	&=\me^{\mi \sqrt{E_D}t}
	\left[\varepsilon\alpha_k\mL^{(0)}\Phi_k
	-\varepsilon^2\nabla_\bY\alpha_k\cdot \mi\mathcal{A}\Phi_k
	+\varepsilon^3 R_1\right],\\
	\varepsilon \operatorname{Tr}\left(U(\varepsilon\by)\mathfrak{A}\widetilde{\varphi}_{\mathrm{eff}}\right)
	&=\me^{\mi \sqrt{E_D}t}
	\left[\varepsilon^2\alpha_k\operatorname{Tr}(U\mathfrak{A}\Phi_k)
	+\varepsilon^3 R_2\right].
\end{aligned}
\end{equation*}
Since $J_\varepsilon(\by)^{-1/2}-1=O(\varepsilon)$, replacing $J_\varepsilon(\by)^{-1/2}$ by $1$ in the $\varepsilon^2$ terms only changes the $\varepsilon^3$ remainder. Therefore
\begin{equation*}
\begin{aligned}
	-\mL^{(\varepsilon)}\varphi_{\mathrm{eff}}
	=\me^{\mi \sqrt{E_D}t}\bigg[
	&-\varepsilon J_\varepsilon(\by)^{-1/2}\alpha_k\mL^{(0)}\Phi_k
	+\varepsilon^2\nabla_\bY\alpha_k\cdot \mi\mathcal{A}\Phi_k \\
	&-\varepsilon^2\alpha_k\operatorname{Tr}(U\mathfrak{A}\Phi_k)
	-\varepsilon^3 R_3\bigg],
\end{aligned}
\end{equation*}
where $R_3$ absorbs $R_1,R_2,\mL_r(\alpha_k\Phi_k)$ and the harmless factors generated by $J_\varepsilon(\by)^{-1/2}-1$. Moreover,
\begin{equation*}
	\|R_3(\varepsilon t)\|_{H^s}
	\lesssim \varepsilon^{-1}\|\alpha_k(\varepsilon t)\|_{H^{s+2}},
\end{equation*}
where the constant is independent of $\varepsilon$. The extra $\varepsilon^{-1}$ comes from the slow-variation of the profiles in spatial variables. With the above calculation, we obtain
\begin{equation*}
\begin{aligned}
		\frac{\partial^2 \eta}{\partial t^2} + \mL^{(\varepsilon)}\eta = \me^{\mi \sqrt{E_D}t} \bigg[ & \varepsilon J_\varepsilon(\by)^{-1/2}\alpha_k \left( E_D - \mL^{(0)}\right)\Phi_k + F_2(\by,\varepsilon t) \\
			  &+\varepsilon^2 \left(-2\mi \sqrt{E_D} \partial_T \alpha_k \Phi_k + \nabla_\bY \alpha_k \cdot \mi \mathcal{A}\Phi_k - \alpha_k \operatorname{Tr}(U\mathfrak{A}\Phi_k) \right) \bigg],
\end{aligned}
\end{equation*}
with initial data 
\begin{equation*}
\left\{
\begin{aligned}
	\eta(\by,0) &= 0, \\
		\frac{\partial \eta}{\partial t}(\by,0) &= -\varepsilon^2 J_\varepsilon(\by)^{-1/2}\partial_T\alpha_k(\varepsilon\by,0)\Phi_k(\by),
\end{aligned}\right.
\end{equation*}
where 
\begin{equation*}
\begin{aligned}
	F_2&:=-\varepsilon^3
	\left( R_3 + J_\varepsilon(\by)^{-1/2}\partial^2_T \alpha_k \Phi_k + R_4\right),\\
	R_4&:=2\mi \sqrt{E_D}\frac{J_\varepsilon(\by)^{-1/2}-1}{\varepsilon}\partial_T\alpha_k\Phi_k,\\
	 \| F_2(\varepsilon t)\|_{H^s} &\lesssim \varepsilon^2\left(\|\boldsymbol{\alpha}(\varepsilon t)\|_{H^{s+2}} + \| \partial^2_T\boldsymbol{\alpha}(\varepsilon t)\|_{H^s}\right).
\end{aligned}
\end{equation*}
Here, the bound for $R_4$ follows from $J_\varepsilon(\by)^{-1/2}-1=O(\varepsilon)$ and the effective Dirac equation. The $O(\varepsilon)$ term is clearly $0$. We further denote that
\begin{equation*}
	 F_1(\by,\varepsilon t) = \varepsilon^2 \left(-2\mi \sqrt{E_D} \partial_T \alpha_k \Phi_k + \nabla_\bY \alpha_k \cdot \mi \mathcal{A}\Phi_k - \alpha_k \operatorname{Tr}(U\mathfrak{A}\Phi_k) \right).
\end{equation*}
\begin{equation*}
	 F_0(\by) = -\varepsilon^2 J_\varepsilon(\by)^{-1/2}\partial_T\alpha_k(\varepsilon\by,0)\Phi_k(\by).
\end{equation*}
Then the error equation becomes
\begin{equation*}
\left\{
\begin{aligned}
	\frac{\partial^2 \eta}{\partial t^2} + \mL^{(\varepsilon)}\eta &= \me^{\mi \sqrt{E_D}t}\left[ F_1+F_2\right], \\
	\eta(\by,0) & = 0, \\
	\frac{\partial \eta}{\partial t}(\by,0) &= F_0.
\end{aligned}\right.
\end{equation*}

\section{Schr\"odinger Case}
\label{AppSE}
In this appendix, we carry the whole procedure to the Schr\"odinger equation. To be more precise, this time we consider the time-dependent Schr\"odinger equation
\begin{equation}
\label{SchrodingerEquation}
\left\{ \begin{aligned}
	\mi \frac{\partial \phi}{\partial t} &= \mL^\bracketeps \phi, \\
						\phi(\by,0)	&= \varepsilon J_\varepsilon(\by)^{-1/2}\left[ \beta_{10}(\varepsilon\by)\Phi_1(\by)+\beta_{20}(\varepsilon\by)\Phi_2(\by) \right].
\end{aligned}\right.
\end{equation}
The ansatz is then
\begin{equation}
	\label{TheSAnsatz}
	\phi_{\mathrm{eff}}(\by,t) = \varepsilon \me^{-\mi E_D t} J_\varepsilon(\by)^{-1/2} \left[ \beta_1(\varepsilon \by,\varepsilon t)\Phi_1(\by) + \beta_2(\varepsilon \by,\varepsilon t)\Phi_2(\by)\right],
\end{equation}
where $\boldsymbol{\beta}=(\beta_1,\beta_2)$ satisfies the following effective equation 
\begin{equation}
\label{SDiracEquation}
	\left\{\begin{aligned}
		\mi \partial_T\beta_1 &= -\nu_{_F}(\mi \partial_{Y_1}\beta_2 - \partial_{Y_2}\beta_2) \\&\quad + \mu\left(\operatorname{Tr}(U\sigma_3)-\mi\operatorname{Tr}(U\sigma_1) \right)\beta_2 + \left( \xi \operatorname{Tr}(U\sigma_0) + \xi^\# \operatorname{Tr}(U\sigma_2) \right)\beta_1, \\
		\mi \partial_T\beta_2 &= -\nu_{_F}(\mi \partial_{Y_1}\beta_1 + \partial_{Y_2}\beta_1) \\&\quad + \bar{\mu}\left(\operatorname{Tr}(U\sigma_3)+\mi\operatorname{Tr}(U\sigma_1) \right)\beta_1 + \left( \xi \operatorname{Tr}(U\sigma_0) + \xi^\# \operatorname{Tr}(U\sigma_2) \right)\beta_2, \\
		\beta_1(\bY,0)&=\beta_{10}(\bY), \quad\beta_2(\bY,0)=\beta_{20}(\bY).
	\end{aligned}\right.
\end{equation}
The constants are exactly the same as in \eqref{DiracEquation}.

We then similarly put $\eta = \phi-\phi_{\mathrm{eff}}$ and look for the equation of $\eta$, which is $\mi \partial_t \eta - \mL^\bracketeps \eta = \mL^\bracketeps \phi_{\mathrm{eff}} - \mi \partial_t \phi_{\mathrm{eff}}$. The right hand side is computed just as in Appendix \ref{AppAEEE}. Put
\begin{equation*}
	\widetilde{\phi}_{\mathrm{eff}}(\by,t):=J_\varepsilon(\by)^{1/2}\phi_{\mathrm{eff}}(\by,t)
	=\varepsilon \me^{-\mi E_D t}\beta_k(\varepsilon\by,\varepsilon t)\Phi_k(\by).
\end{equation*}
Then
\begin{equation*}
\begin{aligned}
	-\mi \partial_t \phi_{\mathrm{eff}}
	&= \me^{-\mi E_D t}\left[
	-\varepsilon E_D J_\varepsilon(\by)^{-1/2}\beta_k \Phi_k
	-\varepsilon^2 \mi J_\varepsilon(\by)^{-1/2}\partial_T \beta_k \Phi_k
	\right], \\
	\mL^{(\varepsilon)}\phi_{\mathrm{eff}}
	&= \me^{-\mi E_D t}\left[
	\varepsilon J_\varepsilon(\by)^{-1/2}\beta_k \mL^{(0)}\Phi_k
	-\varepsilon^2 \nabla_\bY \beta_k \cdot \mi \mathcal{A}\Phi_k
	+\varepsilon^2 \beta_k \operatorname{Tr}(U\mathfrak{A}\Phi_k)
	+\varepsilon^3 R_1
	\right].
\end{aligned}
\end{equation*}
Hence, we obtain that $\eta$ satisfies
\begin{equation*}
\begin{aligned}
	\mi \frac{\partial \eta}{\partial t} - \mL^\bracketeps \eta &= \me^{-\mi E_D t} \Big[ \varepsilon J_\varepsilon(\by)^{-1/2}\beta_k(\mL^{(0)}-E_D)\Phi_k \\
		&\qquad + \varepsilon^2 \left(-\mi \partial_T \beta_k \Phi_k - \nabla_\bY \beta_k \cdot \mi \mathcal{A}\Phi_k + \beta_k \operatorname{Tr}(U\mathfrak{A}\Phi_k) \right) \\
		&\qquad +\varepsilon^3R_2 \Big],
\end{aligned}
\end{equation*}
subjected to a $0$ initial datum $\eta(\by,0)=0$. Here, $R_2$ absorbs $R_1$ and the harmless factors generated by $J_\varepsilon(\by)^{-1/2}-1$, with the same type of Sobolev bounds as the remainders in Appendix \ref{AppAEEE}.

The error estimate is completely analogous to \Cref{SecDFEE} and \ref{SecCR}. The only change is that we now have a better kernel $k(\lambda,t) = \me^{-\mi \lambda t}$, which does not exhibit any singularity on the spectrum. This allows us to remove the assumption $V >0$ or $V \equiv 0$, and avoid the discussion in \Cref{SecTS0}. Consequently, we arrive at
\begin{theorem}
\label{MainTheoremS}
 Suppose $(A,V)$ is a pair of honeycomb media, $(\bK,E_D)$ is a Dirac point, $V$ is bounded by below, and $\bu:\rN^2 \to \rN^2$ is a deformation such that $U=D_\bY\bu \in C_b^\infty$. Assume that $\boldsymbol{\beta}(0)=(\beta_{10},\beta_{20}) \in \mathcal{S}$. Define $\phi_{\mathrm{eff}}$ through \eqref{TheSAnsatz} with $\boldsymbol{\beta} = (\beta_1,\beta_2)$ satisfying the Dirac equation \eqref{SDiracEquation}. Then for any $\rho>0$, the Schr\"odinger equation \eqref{SchrodingerEquation} admits a unique solution $\phi \in C^\infty([0,\rho\varepsilon^{-1}] \times \rN^2)$ satisfying
	\begin{equation}
		\sup_{t \in [0,\rho \varepsilon^{-1}]}\| \phi(t) - \phi_{\mathrm{eff}}(t) \|_{H^s} \le C_{s,\rho} \varepsilon,
	\end{equation} 
	for some constant $C_{s,\rho}$ independent of sufficiently small $\varepsilon>0$.
\end{theorem}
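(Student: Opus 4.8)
The plan is to transcribe the argument of \Cref{SecDFEE,SecCR}, replacing the wave propagators $\me^{\pm\mi\sqrt{\mL^{(\varepsilon)}}t}$ by the Schr\"odinger group $\me^{-\mi\mL^{(\varepsilon)}t}$. The decisive simplification is that $\me^{-\mi\mL^{(\varepsilon)}t}$ is unitary for \emph{every} real spectral value, with no factor $(\mL^{(\varepsilon)})^{-1/2}$ entering Duhamel's formula; consequently the kernel is never singular, one never needs $\mL^{(\varepsilon)}$ bounded away from $0$, and the low-frequency surgery of \Cref{SecTS0} is unnecessary, so it suffices that $V$ be bounded from below, which is exactly what makes $\mL^{(\varepsilon)}$ self-adjoint on $H^2(\rN^2)$ for small $\varepsilon$ by \Cref{AsymptoticExpansionOfTheOperator}. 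First I would record well-posedness: by Stone's theorem $\me^{-\mi\mL^{(\varepsilon)}t}$ is a strongly continuous unitary group on $L^2(\rN^2)$ which, by \Cref{UniformEllipticRegularity}, restricts to a group on each $H^{2N}(\rN^2)$ bounded uniformly in $\varepsilon$; and since $\boldsymbol{\beta}(0)\in\mathcal{S}$, the effective system (\ref{SDiracEquation}) shares the first-order hyperbolic structure of (\ref{DiracEquation}), so the analogue of \Cref{RegularityOfTheDiracEquation} yields $\sup_{T\le\rho}\|\boldsymbol{\beta}(T)\|_{H^s_m}<\infty$ for all $s,m$.

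As derived above in this appendix, $\eta=\phi-\phi_\text{eff}$ solves
\begin{equation*}
	\mi\partial_t\eta-\mL^{(\varepsilon)}\eta=\me^{-\mi E_D t}\big[F_1(\varepsilon t)+F_2(\varepsilon t)\big],\qquad \eta(\cdot,0)=0,
\end{equation*}
with $F_1=\varepsilon^2\big(-\mi\partial_T\beta_k\Phi_k-\nabla_\bY\beta_k\cdot\mi\mathcal{A}\Phi_k+\beta_k\operatorname{Tr}(U\mathfrak{A}\Phi_k)\big)$ and $\|F_2(\varepsilon t)\|_{H^s}\lesssim\varepsilon^2(\|\boldsymbol{\beta}(\varepsilon t)\|_{H^{s+2}}+\|\partial_T^2\boldsymbol{\beta}(\varepsilon t)\|_{H^s})$; note that there is no initial-data term here. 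By Duhamel, $\eta(t)=-\mi\int_0^t\me^{-\mi\mL^{(\varepsilon)}(t-s)}\me^{-\mi E_D s}[F_1(\varepsilon s)+F_2(\varepsilon s)]\,\md s$, which I split through the partition $1=\chi_L+\chi_D+\chi_H$ and the regularized projections $P_J^{(\varepsilon)}=\chi_J(\mL^{(\varepsilon)})$ of \Cref{SecDFEE}. The $F_2$ contribution is $O(\varepsilon)$ by the crude bound $\lesssim t\sup_{s\le t}\|F_2(\varepsilon s)\|_{H^{2N}}\lesssim\varepsilon^{-1}\varepsilon^2$ for $t\le\rho\varepsilon^{-1}$, using unitarity together with \Cref{UniformEllipticRegularity} exactly as for $G_2$ in (\ref{G2HsBound}). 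On $\operatorname{Ran}P_L^{(\varepsilon)}$ and $\operatorname{Ran}P_H^{(\varepsilon)}$ the resolvent $(E_D-\mL^{(\varepsilon)})^{-1}$ is bounded by $\lesssim\delta^{-1}$ uniformly in $\varepsilon$ (the supports of $\chi_L,\chi_H$ avoid a $\delta$-neighbourhood of $E_D$), so an integration by parts in $s$ based on $\me^{-\mi(E_D-\mL^{(\varepsilon)})s}=\mi(E_D-\mL^{(\varepsilon)})^{-1}\partial_s\me^{-\mi(E_D-\mL^{(\varepsilon)})s}$ --- the computation (\ref{GHIntegratedByParts})--(\ref{GHBound}) with $S^{-1}(\sqrt{E_D}\,\Id\pm S)^{-1}$ replaced by $(E_D-\mL^{(\varepsilon)})^{-1}$ --- bounds the $P_L$- and $P_H$-parts by $(1+\varepsilon^2t^2)^{1/2}\big(\sup\|F_1\|_{L^2}+\sup\|\partial_T F_1\|_{L^2}\big)\lesssim\varepsilon$, with the $H^{2N}$ version following by inserting $(\mL^{(\varepsilon)})^N$ (which commutes with the group and with $P_J^{(\varepsilon)}$) and using symbolic calculus on $\operatorname{Ran}P_H^{(\varepsilon)}$ as in (\ref{ControlOfHighEnergySpectralIntegral}).

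The remaining, and essential, point is the resonant term $P_D^{(\varepsilon)}F_1$. By \Cref{ConvergenceOfSpectralProjections} --- whose proof via \Cref{ResolventControl} uses only self-adjointness, the $O(\varepsilon)$ operator bound on $\mL_R$ and \Cref{UniformEllipticRegularity}, and hence is unaffected by weakening $V>0$ to $V$ bounded from below --- we have $P_D^{(\varepsilon)}F_1=P_D^{(0)}F_1+R$ with $\|R\|_{L^2}\lesssim\varepsilon\|F_1\|_{L^2}\lesssim\varepsilon^2$, so $R$ integrated over $t\le\rho\varepsilon^{-1}$ contributes $O(\varepsilon)$. For $P_D^{(0)}F_1$ I would repeat \Cref{SecCR}: using the Floquet--Bloch functional calculus of \Cref{FunctionalCalculusOfFBDecomposition} together with \Cref{EnergyLevelNearED}, decompose it into a far-momenta piece ($|\bk-\bK|\ge k_0$, finitely many bands) that is $O(\varepsilon^N)$ for every $N$ by the rapid decay of $\widehat{\Gamma}$ (\Cref{FarMomentaDecay}), and a near-Dirac-point piece supported on the bands $\pm$ with $|\bk-\bK|<k_0$; substituting the expansion (\ref{PhikNearK}) of $\Phi_\pm$ and the inner-product tables of \Cref{MathcalAMatrix,MathfrakAMatrix}, the leading term of the latter is precisely the Fourier transform at $\boldsymbol{\kappa}/\varepsilon$ of the effective system (\ref{SDiracEquation}) and therefore vanishes, leaving an $O(\varepsilon)$ remainder from the $\Phi_{R,\pm}$-corrections and the annulus $\varepsilon\le|\boldsymbol{\kappa}|<k_0$, exactly as in the derivation of (\ref{H1Bound})--(\ref{H2Bound}) (this is the Schr\"odinger analogue of \Cref{DiracPartBound}). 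This gives $\|P_D^{(0)}F_1(\varepsilon s)\|_{L^2}\lesssim\varepsilon^2$, hence the $P_D^{(\varepsilon)}$-part of $\eta$ has $L^2$-norm $\lesssim t\varepsilon^2\lesssim\varepsilon$ for $t\le\rho\varepsilon^{-1}$, and the $H^{2N}$ estimate follows because $(\mL^{(\varepsilon)})^NP_D^{(\varepsilon)}$ is $L^2\to L^2$ bounded.

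Summing the pieces and using the $\mathcal{S}$-regularity of $\boldsymbol{\beta}$ to replace every $\boldsymbol{\beta}$-norm by a constant depending only on $\rho$ gives $\sup_{t\le\rho\varepsilon^{-1}}\|\eta(t)\|_{H^s}\le C_{s,\rho}\varepsilon$, which is \Cref{MainTheoremS}. I expect the main obstacle to be the resonance analysis of the previous paragraph: establishing the exact cancellation of the leading near-Dirac-point Floquet--Bloch projection of the source against the effective equation (\ref{SDiracEquation}) and controlling the resulting remainder --- this is where the precise form of the effective dynamics enters, although in practice it is only a transcription of \Cref{SecCR} with the sign changes dictated by (\ref{SDiracEquation}). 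The genuinely new --- and minor --- verification is that \Cref{ConvergenceOfSpectralProjections,ResolventControl} and the symbolic-calculus bounds persist when $V$ is merely bounded from below, which holds precisely because the unitary Schr\"odinger group never drives the analysis toward a singular point of the resolvent.
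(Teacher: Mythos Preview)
Your proposal is correct and follows essentially the same approach as the paper, which simply remarks that the error estimate is ``completely analogous'' to \Cref{SecDFEE,SecCR} with the better kernel $k(\lambda,t)=\me^{-\mi\lambda t}$ removing the need for the low-frequency analysis of \Cref{SecTS0}. You have in fact fleshed out considerably more detail than the paper provides, and your identification of the key simplifications --- unitarity of the Schr\"odinger group at every spectral value, the absence of an initial-data term, and the persistence of \Cref{ConvergenceOfSpectralProjections} under the weaker assumption that $V$ is bounded below --- is accurate.
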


Finally, let us look at the pseudo-magnetic field in this setting. For $\mu \in \rN$, the effective equation \eqref{SDiracEquation} simplifies to
\begin{equation}
	\mi \partial_T \boldsymbol{\beta} = v\left[ (-\mi \partial_{Y_1} - A_1)\sigma_1 + (\mi \partial_{Y_2} + A_2) \sigma_2 \right] \boldsymbol{\beta} + W_{\mathrm{eff}}\sigma_0 \boldsymbol{\beta},
\end{equation}
where
\begin{equation}
\begin{aligned}
	v &= \nu_{_F}, \\
	W_{\mathrm{eff}} &= \xi \operatorname{Tr}(U\sigma_0) + \xi^\#\operatorname{Tr}(U\sigma_2), \\
	A_1 &= - \frac{\mu}{\nu_{_F}}\mathrm{Tr}(U\sigma_3), \\
	A_2 &=  \frac{\mu}{\nu_{_F}}\mathrm{Tr}(U\sigma_1).	
\end{aligned}
\end{equation}
Conjugating by $\sigma_1$, the Hamiltonian is also unitarily equivalent to the standard form
\begin{equation*}
	\mathcal{D}_S = v\left[ (\frac{1}{\mi}\nabla_\bY - \mathbf{A}_{\mathrm{eff}})\cdot \boldsymbol{\sigma}\right] + W_{\mathrm{eff}}\sigma_0.
\end{equation*}
Note that the expression for $\mathbf{A}_{\mathrm{eff}}$ is exactly the same as in the wave equation case, while those for $v$ and $W_{\mathrm{eff}}$ differ only by a factor $(2\sqrt{E_D})^{-1}$, and a sign.

\end{appendices}

\backmatter

\bmhead{Acknowledgements}

This work was supported by the National Key R\&D Program of China (Grant No. 2021YFA0719200).

\section*{Declarations}

\bmhead{Funding}

This work was supported by the National Key R\&D Program of China (Grant No. 2021YFA0719200).

\bmhead{Data availability}

The data that support the findings of this study are available from the authors upon reasonable request.

\bmhead{Competing interests}

The authors declare that they have no competing interests.

\bibliography{main}

\end{document}